\newtheorem{theorem}{Theorem}
\newtheorem{corollary}[theorem]{Corollary}
\newtheorem{lemma}[theorem]{Lemma}
\newtheorem{proposition}[theorem]{Proposition}
\newtheorem{remark}[theorem]{Remark}
\begin{document}
\title[Sobolev Anisotropic inequalities]{Sobolev Anisotropic inequalities\\
with monomial weights}
\author{F. Feo}
\address{Dipartimento di Ingegneria\\
Universit\`{a} degli Studi di Napoli \textquotedblleft Parthenope
\textquotedblright\\
Centro Direzionale, Isola C4, 80143 Napoli, Italy}
\email{filomena.feo@uniparthenope.it}
\author{J. Martin}
\curraddr{Universitat Aut\`onoma de Barcelona, Department of Mathematics, Bellaterra
(Barcelona), Spain}
\email{jmartin@mat.uab.cat}
\author{M. R. Posteraro}
\address{Universit\`{a} degli Studi di Napoli \textquotedblleft Federico
II\textquotedblright\\
Dipartimento di Matematica \textquotedblleft R. Caccioppoli\textquotedblright%
\\
Complesso Monte S. Angelo, Napoli, Italy} \email{posterar@unina.it}
\subjclass[2000]{Primary: 46E30, 26D10.} \keywords{anisotropic
inequalities, rearrangement invariant space, Sobolev embedding.}

\begin{abstract}
We derive some anisotropic Sobolev inequalities in $\mathbb{R}^{n}$ with a
monomial weight in the general setting of rearrangement invariant spaces.
Our starting point is to obtain an integral oscillation inequality in
multiplicative form.
\end{abstract}
\maketitle

\numberwithin{equation}{section} \numberwithin{theorem}{section}

\section{Introduction}

The study of functional and geometric inequalities with monomial weights,
i.e. weights defined by
\begin{equation}
d\mu (x):=x^{A}dx=|x_{1}|^{A_{1}}\cdots |x_{n}|^{A_{n}}dx.  \label{mu}
\end{equation}
where $A=(A_{1},A_{2},\dots ,A_{n})$ is a vector in $\mathbb{R}^{n}$ with $%
A_{i}\geq 0$ for $i=1,\dots ,n,$ have been considered extensively recently
(see for example \cite{C}, \cite{CR},
\cite{BGM}, \cite{BCM}, \cite{Ng} and the references quoted therein). The
interest for this kind of problems appears when Cabr\'{e} and Ros-Oton
(motivated by an open question raised by Haim Brezis \cite{Br0},\cite{Br1})
studied in \cite{CR1} the problem of the regularity of stable solutions to
reaction-diffusion problems of double revolution in $\mathbb{R}^{2}.$ A
function $u$ has symmetry of double revolution if $u(x,y)=u(|x|,|y|)$, with $%
(x,y)\in \mathbb{R}^{D}=\mathbb{R}^{A_{1}+1}\times \mathbb{R}^{A_{2}+1}$ ($%
A_{i}$ are positive integers), \textit{i.e.} the function $u$ can be seen as
a suitable function in $\mathbb{R}^{2}$, and it is here where the Jacobian $%
|x_{1}|^{A_{1}}|x_{2}|^{A_{2}}$ appears (see \cite{CR1} for the details). In
\cite{CR}, the authors established a sharp isoperimetric inequality in $%
\left( \mathbb{R}^{n},\mu \right) $ (see also \cite{BCM}) which allows them
to obtain the following weighted Sobolev inequality.

\begin{theorem}
\textrm{\emph{(}\cite[Theorem 1.3]{CR}\emph{)}} \label{Theorem:Sobolev W1p}
Let $\mu $ be defined in (\ref{mu}), let
\begin{equation}
D=n+A_{1}+\dots +A_{n}  \label{D}
\end{equation}
and $1\leq p<D$. Then for any $f\in C_{c}^{1}(\mathbb{R}^{n})$ we have%
\footnote{$C_{c}^{1}(\mathbb{R}^{n})$ denotes the space of $C^{1}$ functions
with compact support in $\mathbb{R}^{n}.$}
\begin{equation}
\left( \int_{{\mathbb{R}}^{n}}|f|^{p^{\ast }}d\mu \right) ^{1/p^{\ast }}\leq
C\left( \int_{{\mathbb{R}}^{n}}|\nabla f|^{p}d\mu \right) ^{1/p}
\label{ineq Cabre}
\end{equation}
for some positive constant $C$, where
\begin{equation}
p^{\ast }=\frac{Dp}{D-p}.  \label{p star}
\end{equation}
\end{theorem}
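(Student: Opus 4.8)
The plan is to derive \eqref{ineq Cabre} from the sharp weighted isoperimetric inequality in $(\mathbb{R}^{n},\mu)$ that Cabr\'e and Ros-Oton established, treating first the endpoint $p=1$ and then bootstrapping to $1<p<D$ by a power substitution. The isoperimetric inequality of \cite{CR} (see also \cite{BCM}) furnishes a constant $C=C(n,A)>0$ with
\[
\mu(E)^{\frac{D-1}{D}}\le C\,P_{\mu}(E)
\]
for every set $E\subset\mathbb{R}^{n}$ of finite weighted perimeter $P_{\mu}(E)=\int_{\partial^{\ast}E}x^{A}\,d\mathcal{H}^{n-1}$, with $D$ as in \eqref{D}; only this inequality is needed, not the value of the optimal constant nor the shape of the extremal sets.

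For $p=1$ I would run the Federer--Fleming argument adapted to $\mu$. Given $f\in C_{c}^{1}(\mathbb{R}^{n})$, I replace $f$ by $|f|$ and write $f(x)=\int_{0}^{\infty}\chi_{\{f>t\}}(x)\,dt$. By Minkowski's integral inequality in $L^{D/(D-1)}(\mu)$,
\[
\Big(\int_{\mathbb{R}^{n}}|f|^{\frac{D}{D-1}}\,d\mu\Big)^{\frac{D-1}{D}}\le\int_{0}^{\infty}\mu(\{f>t\})^{\frac{D-1}{D}}\,dt;
\]
applying the isoperimetric inequality to $\{f>t\}$ --- which for a.e.\ $t>0$ is a bounded set of finite weighted perimeter --- and then the coarea formula $\int_{\mathbb{R}^{n}}|\nabla f|\,x^{A}\,dx=\int_{0}^{\infty}P_{\mu}(\{f>t\})\,dt$ bounds the right-hand side by $C\int_{\mathbb{R}^{n}}|\nabla f|\,d\mu$. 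This is the case $p=1$, namely $\|f\|_{L^{D/(D-1)}(\mu)}\le C\|\nabla f\|_{L^{1}(\mu)}$.

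To reach $1<p<D$ I would apply the case $p=1$ to $g=|f|^{\gamma}$ with
\[
\gamma:=\frac{p(D-1)}{D-p}>1 ,
\]
noting that $g\in C_{c}^{1}(\mathbb{R}^{n})$ with $|\nabla g|=\gamma|f|^{\gamma-1}|\nabla f|$ since $\gamma>1$ and $f\in C_{c}^{1}$. H\"older's inequality with exponents $p$ and $p'=p/(p-1)$ then yields
\[
\Big(\int_{\mathbb{R}^{n}}|f|^{\frac{\gamma D}{D-1}}\,d\mu\Big)^{\frac{D-1}{D}}\le C\gamma\Big(\int_{\mathbb{R}^{n}}|f|^{(\gamma-1)p'}\,d\mu\Big)^{\frac{1}{p'}}\Big(\int_{\mathbb{R}^{n}}|\nabla f|^{p}\,d\mu\Big)^{\frac{1}{p}} .
\]
The value of $\gamma$ is chosen so that $\frac{\gamma D}{D-1}=(\gamma-1)p'=p^{\ast}$, and a direct computation gives $\frac{D-1}{D}-\frac{1}{p'}=\frac{1}{p^{\ast}}$; since $\int_{\mathbb{R}^{n}}|f|^{p^{\ast}}\,d\mu<\infty$ for $f\in C_{c}^{1}$, dividing by the finite quantity $\big(\int_{\mathbb{R}^{n}}|f|^{p^{\ast}}\,d\mu\big)^{1/p'}$ produces \eqref{ineq Cabre} with the constant $C\gamma$.

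The genuine difficulty is concentrated entirely in the isoperimetric inequality of the first step --- the deep input, proved in \cite{CR} via the Alexandrov--Bakelman--Pucci method (and obtainable also by optimal transport) --- whereas the passage to the Sobolev inequality is the classical truncation scheme. The only points requiring some care are the validity of the coarea formula for the locally integrable weight $x^{A}$, which vanishes on the coordinate hyperplanes, and the admissibility of $|f|^{\gamma}$ as a test function; both are routine since $f$ has compact support and $\gamma>1$.
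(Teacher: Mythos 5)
Your argument is correct: the Federer--Fleming scheme (layer-cake plus Minkowski's integral inequality, the weighted isoperimetric inequality of \cite{CR} applied to the level sets, and the coarea formula for the weight $x^{A}$) gives the case $p=1$, and the substitution $g=|f|^{\gamma}$ with $\gamma=p(D-1)/(D-p)$ together with H\"older's inequality gives $1<p<D$; your exponent bookkeeping ($\gamma D/(D-1)=(\gamma-1)p'=p^{\ast}$ and $\tfrac{D-1}{D}-\tfrac{1}{p'}=\tfrac{1}{p^{\ast}}$) checks out, and the two technical points you flag (a.e.\ regularity of the level sets, $|f|^{\gamma}\in C_{c}^{1}$ for $\gamma>1$) are indeed routine. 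Note, however, that the present paper does not prove Theorem \ref{Theorem:Sobolev W1p} at all: it is quoted from \cite{CR}, and your route is essentially the one followed there. Where the paper does engage with this statement is in the opposite direction: it takes the $p=1$ case \eqref{un} as its starting point and, instead of the power substitution $|f|^{\gamma}$, passes to $p>1$ via truncations $f_{t_{1}}^{t_{2}}$ and rearrangements (Theorem \ref{main th}, then Theorem \ref{th2} and Proposition \ref{prop L1}). That longer route buys strictly more than \eqref{ineq Cabre}: it yields the anisotropic multiplicative bound \eqref{p1} with the Lorentz norm $L_{\mu}^{\bar{p}^{\ast},\bar{p}}\subset L_{\mu}^{\bar{p}^{\ast}}$ on the left and possibly different exponents $p_{i}$ on each partial derivative, from which \eqref{ineq Cabre} follows by the arithmetic--geometric mean inequality. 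Your approach is the more elementary and self-contained derivation of the stated $L^{p^{\ast}}$ inequality; it does not, by itself, produce the Lorentz-norm or anisotropic refinements that are the point of the paper.
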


As in the unweighted case a scaling argument shows that the exponent $%
p^{\ast }$ is optimal, in the sense that (\ref{p star}) can not hold with
any other exponent. Moreover the exponent $p^{\ast }$ is exactly the same as
in the classical Sobolev inequality, but in this case the ''dimension'' is
given by $D$ (instead of $n$). If $A_{1}=...=A_{n}=0$, then exponent $%
p^{\ast }$ and inequality (\ref{ineq Cabre}) are exactly the classical ones.

We observe that when $p>1$ and $Ai<p-1$ for all $i=1,\cdots ,n$ the weight
in \eqref{mu} belongs to the Muckenhoupt class $A_{p}$, i.e. but, in general
the monomial weight does not satisfy the Muckenhoupt condition.

The main purpose of this paper is to obtain some anisotropic Sobolev
inequalities on $\mathbb{R}^{n}$ with monomial weight $x^{A}$ in the general
setting of rearrangement invariant spaces (\textit{e.g.} $L^{p}$, Lorentz,
Orlicz, Lorentz-Zygmund, etc...). To this end, we will use the
''symmetrization by truncation principle'', developed by Milman-Mart\'{i}n
in \cite{MMAdv} (see also \cite{MMpoten} and \cite{MMP07}). This method will
provide us a family of rearrangement pointwise inequalities between the
special difference\footnote{$f_{\mu }^{\ast }$ is the decreasing
rearrangement of $f$ with respect the measure $\mu $, and\ $f_{\mu }^{\ast
\ast }(t)=\frac{1}{t}\int_{0}^{t}f_{\mu }^{\ast }(s)ds$ (see Subsection 2.1).%
} $O_{\mu }(f,t):=f_{\mu }^{\ast \ast }(t)-f_{\mu }^{\ast }(t)$ (called the
oscillation of $f$) and the product of the rearrangements of the partial
derivative of $f$ (see Theorem \ref{main th} below) that will be the key to
obtain anisotropic inequalities. More precisely we will prove that
inequality (\ref{ineq Cabre}) with $p=1$ is equivalent to the following
oscillation inequality:
\begin{equation}
\!\!\int_{0}^{t}\!\left( O_{\mu }(f,\cdot )\left( \cdot \right) ^{-\frac{1}{D%
}}\,\right) ^{\ast }(s)ds\preceq \!\int_{0}^{t}\prod_{i=1}^{n}\left[ \left(
\frac{d}{ds}\int_{\{|f|>f_{\mu }^{\ast }(s)\}}\left| f_{x_{i}}\right| d\mu
\right) ^{\ast }(\tau )\right] ^{\frac{A_{i}+1}{D}}d\tau \!\!  \label{pepe}
\end{equation}
for every $t>0$. The rearrangements without subscript $\mu $ are
rearrangement with respect to Lebesgue measure on $\left( 0,\infty \right) $%
, $f_{x_{i}}=\frac{\partial f}{\partial x_{i}}$ and symbol $f\preceq g$
means that there exists an universal constant $c$ (independent of all
parameters involved) such that $f\leq cg$.

Inequality (\ref{pepe}) contains the basic information to obtain anisotropic
Sobolev inequalities on rearrangement invariant spaces, since given a
rearrangement invariant space $X$ on $\left( \mathbb{R}^{n},\mu \right) $,
Hardy's inequality (see (\ref{Hardy}) below) implies\footnote{%
The spaces $\bar{X}$ are defined in Section \ref{secc:ri} below.}
\begin{equation}
\left\| O_{\mu }(f,t)t^{-\frac{1}{D}}\right\| _{\bar{X}}\preceq \left\|
\prod_{i=1}^{n}\left[ \left( \frac{d}{ds}\int_{\{|f|>f_{\mu }^{\ast
}(s)\}}\left| f_{x_{i}}\right| d\mu \right) ^{\ast }(\tau )\right] ^{\frac{%
A_{i}+1}{D}}(t)\right\| _{\bar{X}}. \label{esto}
\end{equation}
For example, given $p_{1},\cdots ,p_{n}\geq 1,$ let $\overline{p}$ be the
weighted harmonic mean between $p_{1},\cdots ,p_{n}$, \textit{i.e.} \
\begin{equation}
\frac{1}{\overline{p}}=\frac{1}{D}\sum_{i=1}^{n}\frac{A_{i}+1}{p_{i}},
\label{harm}
\end{equation}
then (\ref{esto}) implies (see Theorem 4.3 below)
\begin{equation}
\Vert O_{\mu }(f,t)t^{-\frac{1}{D}}\Vert _{\bar{X}^{(\overline{p})}}\preceq
\prod_{i=1}^{n}\left\| f_{x_{i}}\right\| _{X^{(p_{i})}}^{\frac{A_{i}+1}{D}},
\label{ej3}
\end{equation}
where $X^{(p)}=\{f:\left| f\right| ^{p}\in X\}$ endowed with the norm $%
\left\| f\right\| _{X^{(p)}}=\left\| \left| f\right| ^{p}\right\| _{X}^{1/p}.
$

In the particular case that $X=L^{1}$ and $\overline{p}<D,$ then (\ref{ej3})
becomes (see Proposition \ref{prop L1} below)
\begin{equation*}
\Vert f\Vert _{L_{\mu }^{{\overline{p}}^{\ast }}}\preceq
\prod_{i=1}^{n}\left\| f_{x_{i}}\right\| _{L_{\mu }^{p_{i}}}^{\frac{A_{i}+1}{%
D}}\text{ \ \ }\forall f\in C_{c}^{1}(\mathbb{R}^{n}).  
\end{equation*}
where $\bar{p}^{\ast }=\frac{D\bar{p}}{D-\bar{p}}.$

In particular if $p=p_{1}=\cdots =p_{n},$ then $\overline{p}=p$, $\bar{p}%
^{\ast }=p^{\ast }$, and we get
\begin{equation}
\Vert f\Vert _{L_{\mu }^{{\overline{p}}^{\ast }}}\preceq
\prod_{i=1}^{n}\left\| f_{x_{i}}\right\| _{L_{\mu }^{p}}^{\frac{A_{i}+1}{D}}%
\text{ \ \ }\forall f\in C_{c}^{1}(\mathbb{R}^{n}),  \label{ej2}
\end{equation}
which implies (\ref{ineq Cabre}).

In the unweighted case, i.e. $A_{1}=\cdots ,A_{n}=0$, inequality
(\ref{ej2}) is well-known (see \textit{e.g.} \cite{Tr}, \cite{Tar}
and \cite{KP}).
Anisotropic inequalities involving Orlicz norm defined using an $n$%
-dimensional Young function were also studied in \cite{C_fully}. However, as
far we know, our anisotropic inequalities involving rearrangement invariant
spaces are new in this context.

The paper is organized as follows. In Section \ref{preli} we provide a brief
review on the rearrangements of functions and the theory of rearrangement
invariant spaces. In Section \ref{Main} we will prove our main result
(Theorem \ref{main th} below) which establishes the equivalence between (\ref
{ineq Cabre}) with $p=1$ and (\ref{pepe}). Finally in Section \ref{applica}
we use the oscillation inequality \eqref{pepe} to derive anisotropic Sobolev
inequalities in $\mathbb{R}^{n}$ with monomial weight $x^{A}$ in the general
setting of rearrangement invariant spaces, with special attention in the
case of Lebesgue spaces, Lorentz spaces, Lorentz-Zygmund spaces, Gamma
spaces and the recent class of $G\Gamma$ spaces.

\section{Notations and preliminary results\label{preli}}

We briefly recall the basic definitions of rearrangements and of
rearrangement-invariant (r.i.) spaces referring the reader to \cite{BS} and
\cite{KPS}.

\subsection{Rearrangement of functions}

Let $\mu $ an absolutely continuous measure with respect to Lebesgue measure
on $\mathbb{R}^{n}$. For a $\mu $-measurable function $u:\mathbb{R}%
^{n}\rightarrow \mathbb{R},$ the distribution function of $u$ is given by
\begin{equation*}
\mu _{u}(s)=\mu \{x\in {\mathbb{R}^{n}}:\left| u(x)\right| >s\}\text{ \ \ \
\ }s\geq 0.
\end{equation*}
The \textbf{decreasing rearrangement} $u_{\mu }^{\ast }$ of $u$ is the
right-continuous non-increasing function from $[0,\infty )$ into $[0,\infty
] $ which is equimeasurable with $u$. Namely,
\begin{equation*}
u_{\mu }^{\ast }(t)=\inf \{s\geq 0:\mu _{u}(s)\leq t\}{\text{ \ \ \ \ }t\geq
0}.
\end{equation*}

We also define $u_{\mu }^{\ast \ast }:\left( 0,\infty \right) \rightarrow
\left( 0,\infty \right) $ as
\begin{equation*}
u_{\mu }^{\ast \ast }(t)=\frac{1}{t}\int_{0}^{t}u_{\mu }^{\ast }(s)ds,
\end{equation*}
Note that $u_{\mu }^{\ast \ast }$ is also decreasing and $u_{\mu }^{\ast
}\leq u_{\mu }^{\ast \ast },$ moreover
\begin{equation*}
(u+v)_{\mu }^{\ast \ast }(t)\leq u_{\mu }^{\ast \ast }(t)+v_{\mu }^{\ast
\ast }(t)
\end{equation*}
for $t>0.$

The \textbf{oscillation }of $u$ is defined by
\begin{equation*}
O_{\mu }(u,t):=u_{\mu }^{\ast \ast }(t)-u_{\mu }^{\ast }(t).
\end{equation*}
Note that 
\begin{equation}
tO_{\mu }(u,t)=\int_{u_{\mu }^{\ast }(t)}^{\infty }\mu _{u}(s)ds
\label{crece}
\end{equation}
is increasing.

When rearrangements are taken with respect to the Lebesgue measure on $%
(0,\infty )$, we may omit the measure and simply write $u^{\ast }$ and $%
u^{\ast \ast }$, etc...

\subsection{Rearrangement invariant spaces\label{secc:ri}}

We say that a Banach function space $X=X({\mathbb{R}}^{n})$ on $(\mathbb{R}%
^{n},\mu )$ is \textbf{rearrangement-invariant (r.i.) space}, if $g\in X$
implies that $f\in X$ for all $\mu -$measurable functions $f$ such that $%
f_{\mu }^{\ast }=g_{\mu }^{\ast },$ and $\Vert f\Vert _{X}=\Vert g\Vert _{X}$%
.

A basic property of rearrangements is the Hardy-Littlewood inequality which
tells us that, if $u$ and $w$ are two $\mu $-measurable functions on $%
\mathbb{R}^{n}$ , then
\begin{equation}
\int_{\mathbb{R}^{n}}|u(x)w(x)|\,d\mu \leq \int_{0}^{\infty }u_{\mu }^{\ast
}(t)w_{\mu }^{\ast }(t)\,dt.  \label{HL}
\end{equation}
An important consequence (\ref{HL}) is the Hardy-Littlewood-P\'{o}lya
principle stating that
\begin{equation}
\int_{0}^{r}f_{\mu }^{\ast }(s)ds\leq \int_{0}^{r}g_{\mu }^{\ast }(s)ds\quad
\forall r>0\Rightarrow \left\| f\right\| _{X}\leq \left\| g\right\| _{X}%
\text{ \ for any r.i. space }X.  \label{Hardy}
\end{equation}

A r.i. space $X({\mathbb{R}}^{n})$ can be represented by a r.i. space on $%
(0,+\infty ),$ with Lebesgue measure, $\bar{X}=\bar{X}(0,\infty ),$ such
that
\begin{equation*}
\Vert f\Vert _{X}=\Vert f_{\mu }^{\ast }\Vert _{\bar{X}},
\end{equation*}
for every $f\in X.$ A characterization of the norm $\Vert \cdot \Vert _{\bar{%
X}}$ is available (see \cite[Theorem 4.10 and subsequent remarks]{BS}). The
space $\bar{X}$ is called the \textbf{representation space} of $X$.

If $X$ is a r.i space, we have
\begin{equation*}
L_{\mu }^{1}\cap L^{\infty }\subset \bar{X}\subset L_{\mu }^{1}+L^{\infty },
\end{equation*}
with continuous embeddings.

The \textbf{associate space} $X^{\prime }$ of $X$ is the r.i. space of all
measurable functions $h$ for which the r.i. norm given by
\begin{equation*}
\left\| h\right\| _{X^{\prime }}=\sup_{g\neq 0}\frac{\int_{{\mathbb{R}}%
^{n}}\left| g(x)h(x)\right| d\mu (x)}{\left\| g\right\| _{X}}=\sup_{g\neq 0}%
\frac{\int_{0}^{\infty }h_{\mu }^{\ast }(s)g_{\mu }^{\ast }(s)ds}{\left\|
g\right\| _{X}}.
\end{equation*}
In particular the following generalized H\"{o}lder inequality
\begin{equation*}
\int_{{\mathbb{R}}^{n}}\left| g(x)h(x)\right| d\mu (x)\leq \left\| g\right\|
_{X}\left\| h\right\| _{X^{\prime }}
\end{equation*}
holds.

Classically conditions on r.i. spaces are given in terms of the Hardy
operators defined by
\begin{equation*}
Pf(t)=\frac{1}{t}\int_{0}^{t}f(s)ds;\text{ \ \ \ }Q_{a}f(t)=\frac{1}{t^{a}}%
\int_{t}^{\infty }s^{a}f(s)\frac{ds}{s},\text{ \ \ }0\leq a<1,
\end{equation*}
(if $a=0,$ we shall write $Q$ instead of $Q_{0})$.

The boundedness of these operators on r.i. spaces can be described in terms
of the so called \textbf{Boyd indices}\footnote{%
Introduced by D.W. Boyd in \cite{boyd}.} defined by
\begin{equation*}
\bar{\alpha}_{X}=\inf\limits_{s>1}\dfrac{\ln h_{X}(s)}{\ln s}\text{ \ \ and
\ \ }\underline{\alpha }_{X}=\sup\limits_{s<1}\dfrac{\ln h_{X}(s)}{\ln s},
\end{equation*}
where $h_{X}(s)$ denotes the norm of the compression/dilation operator $%
E_{s} $ on $\bar{X}$, defined for $s>0,$ by $E_{s}f(t)=f^{\ast }(\frac{t}{s}%
) $. For example if $X=L_{\mu }^{p}$ with $p>1$, then $\bar{\alpha}_{X}=%
\underline{\alpha }_{X}=\frac{1}{p}$. It is well known that
\begin{equation*}
\begin{array}{c}
P\text{ is bounded on }\bar{X}\text{ }\Leftrightarrow \overline{\alpha }%
_{X}<1, \\
Q\text{ is bounded on }\bar{X}\text{ }\Leftrightarrow \underline{\alpha }%
_{X}>a.
\end{array}
\end{equation*}

The next two Lemmas will be used in Section \ref{applica}.

\begin{lemma}
\emph{(}see \cite[Page 43]{KPS}\emph{)} Let $X$ be a r.i. space and let $%
0\leq \theta _{i}\leq 1$ such that {$\sum_{i=1}^{n}\theta _{i}=1{,}$ then} {%
\
\begin{equation}
\Vert \prod_{i=1}^{n}|f_{i}|^{\theta _{i}}\Vert _{X}\leq
\prod_{i=1}^{n}\Vert f_{i}\Vert _{X}^{\theta _{i}}.  \label{holder
X}
\end{equation}
}
\end{lemma}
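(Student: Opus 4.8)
The plan is to deduce the inequality from the duality between $X$ and its associate space $X'$, combining the generalized Hölder inequality recalled above with the classical (finite) Hölder inequality. First I would make two harmless reductions. An index $i$ with $\theta_i=0$ contributes the factor $|f_i|^{0}=1$ on the left-hand side and $\|f_i\|_X^{0}=1$ on the right-hand side, so we may discard it and assume $\theta_i>0$ for every $i$; and if $\|f_i\|_X=\infty$ for some $i$ the right-hand side is infinite and there is nothing to prove, so we may also assume each $\|f_i\|_X<\infty$.

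For the core argument I would use the representation
\[
\|g\|_X=\sup\Big\{\textstyle\int_{\mathbb{R}^n}|g|\,h\,d\mu:\ h\ge 0,\ \|h\|_{X'}\le 1\Big\},
\]
which holds for every r.i.\ space, since it expresses that $X$ coincides with its second associate space $X''$ (a standard fact for r.i.\ Banach function spaces, and the natural companion of the generalized Hölder inequality $\int|gh|\,d\mu\le\|g\|_X\|h\|_{X'}$ recalled above). Fix a nonnegative $h$ with $\|h\|_{X'}\le 1$. Since $\sum_{i=1}^n\theta_i=1$ we have $h=\prod_{i=1}^n h^{\theta_i}$ pointwise, hence
\[
\int_{\mathbb{R}^n}\prod_{i=1}^n|f_i|^{\theta_i}\,h\,d\mu=\int_{\mathbb{R}^n}\prod_{i=1}^n\big(|f_i|\,h\big)^{\theta_i}\,d\mu .
\]
Applying the classical Hölder inequality with the conjugate exponents $p_i=1/\theta_i$ (note $\sum_{i=1}^n p_i^{-1}=\sum_{i=1}^n\theta_i=1$) to the functions $\big(|f_i|\,h\big)^{\theta_i}$ gives
\[
\int_{\mathbb{R}^n}\prod_{i=1}^n\big(|f_i|\,h\big)^{\theta_i}\,d\mu\le\prod_{i=1}^n\left(\int_{\mathbb{R}^n}|f_i|\,h\,d\mu\right)^{\theta_i},
\]
and the generalized Hölder inequality for the pair $X,X'$ bounds each factor by $\int_{\mathbb{R}^n}|f_i|\,h\,d\mu\le\|f_i\|_X\|h\|_{X'}\le\|f_i\|_X$. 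Combining these estimates and taking the supremum over all admissible $h$ yields $\big\|\prod_{i=1}^n|f_i|^{\theta_i}\big\|_X\le\prod_{i=1}^n\|f_i\|_X^{\theta_i}$; in particular the product lies in $X$.

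The only point requiring real care is the supremum representation of $\|\cdot\|_X$, i.e.\ the identity $X=X''$; this is a standard property of r.i.\ Banach function spaces (which possess the Fatou property by definition), and once it is available the remainder of the proof is just the two applications of Hölder's inequality above together with the elementary bookkeeping about vanishing or infinite factors carried out in the reduction step.
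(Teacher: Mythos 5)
Your proof is correct. The paper gives no argument for this lemma at all --- it is stated with a bare citation to \cite[Page 43]{KPS} --- and your route (reduce to $\theta_i>0$, use the Lorentz--Luxemburg identity $X=X''$ to write $\Vert g\Vert_X$ as a supremum of $\int |g|h\,d\mu$ over $\Vert h\Vert_{X'}\le 1$, split $h=\prod h^{\theta_i}$, and apply classical H\"older with exponents $1/\theta_i$ followed by the generalized H\"older inequality) is exactly the standard proof of the cited fact, so nothing further is needed.
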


\begin{lemma}
\label{01}Let $g,h$ be two positive measurable functions on $(0,\infty )$
such that
\begin{equation*}
g(s)\leq h^{\ast \ast }(s),\text{ for all }s\in (0,\infty ),  \label{h1}
\end{equation*}
and
\begin{equation*}
\int_{0}^{t}g(s)\leq \int_{0}^{t}h^{\ast }(s)ds,\text{ for all }t\in
(0,\infty ).  \label{h2}
\end{equation*}
Then
\begin{equation*}
\int_{0}^{t}g^{\ast }(s)ds\leq 4\int_{0}^{t}h^{\ast }(s)ds\,\text{ for all }%
t\in (0,\infty ).
\end{equation*}
Therefore for any r.i. space $X$%
\begin{equation*}
\left\| g\right\| _{X}\leq 4\left\| h\right\| _{X}.
\end{equation*}
\end{lemma}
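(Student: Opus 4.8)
The plan is to reduce the first assertion (the inequality $\int_{0}^{t}g^{\ast}(s)\,ds\le4\int_{0}^{t}h^{\ast}(s)\,ds$) to the elementary representation
\[
\int_{0}^{t}g^{\ast}(s)\,ds=\sup\Big\{\int_{E}g(s)\,ds:\ E\subseteq(0,\infty)\ \text{measurable},\ |E|\le t\Big\},
\]
which holds for every nonnegative measurable $g$ on $(0,\infty)$ (a standard property of rearrangements; see \cite{BS}), and then to bound $\int_{E}g$ for an arbitrary admissible set $E$ by splitting it at the level $t$ and using one of the two hypotheses on each piece.

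Concretely, I would fix $t>0$ and a measurable set $E$ with $|E|\le t$ and write $E=E_{1}\cup E_{2}$ with $E_{1}=E\cap(0,t)$ and $E_{2}=E\cap[t,\infty)$. On $E_{1}$ only positivity of $g$ and the integral hypothesis are needed: since $E_{1}\subseteq(0,t)$,
\[
\int_{E_{1}}g(s)\,ds\le\int_{0}^{t}g(s)\,ds\le\int_{0}^{t}h^{\ast}(s)\,ds .
\]
On $E_{2}$ I would use the pointwise hypothesis $g\le h^{\ast\ast}$ together with the fact that $h^{\ast\ast}$ is non-increasing, so that $h^{\ast\ast}(s)\le h^{\ast\ast}(t)$ for $s\ge t$:
\[
\int_{E_{2}}g(s)\,ds\le\int_{E_{2}}h^{\ast\ast}(s)\,ds\le|E_{2}|\,h^{\ast\ast}(t)\le t\,h^{\ast\ast}(t)=\int_{0}^{t}h^{\ast}(s)\,ds .
\]
Adding these and taking the supremum over all admissible $E$ would give $\int_{0}^{t}g^{\ast}(s)\,ds\le2\int_{0}^{t}h^{\ast}(s)\,ds$ for every $t>0$, which in particular yields the stated inequality (with the harmless constant $4$).

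The last assertion is then immediate: since $4h^{\ast}$ is the decreasing rearrangement of $4h$, the bound just obtained reads $\int_{0}^{r}g^{\ast}(s)\,ds\le\int_{0}^{r}(4h)^{\ast}(s)\,ds$ for all $r>0$, so the Hardy--Littlewood--P\'olya principle \eqref{Hardy} gives $\|g\|_{X}\le\|4h\|_{X}=4\|h\|_{X}$ for every r.i. space $X$. I do not expect a genuine obstacle here; the only point needing a little care is the set representation of $\int_{0}^{t}g^{\ast}$, in particular the fact that the supremum may be taken over (indeed is attained on) a set of measure exactly $t$, which rests on the non-atomicity of Lebesgue measure on $(0,\infty)$. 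If one prefers not to invoke that identity, the same two estimates can instead be run directly on the explicit near-optimal super-level set $\{g>g^{\ast}(t)\}$, completed inside $\{g=g^{\ast}(t)\}$ when necessary, at the cost of slightly more bookkeeping.
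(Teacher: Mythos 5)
Your argument is correct, and in fact sharper than the statement: the decomposition $E=E_{1}\cup E_{2}$ yields $\int_{0}^{t}g^{\ast}\le 2\int_{0}^{t}h^{\ast}$, so the constant $4$ is indeed harmless. Each step checks out: the identity $\int_{0}^{t}g^{\ast}(s)\,ds=\sup\{\int_{E}g:|E|\le t\}$ is the standard resonance property of Lebesgue measure on $(0,\infty)$ (non-atomicity is exactly what is needed, as you note); the estimate on $E_{1}$ uses only $g\ge 0$ and the integral hypothesis; the estimate on $E_{2}$ uses $g\le h^{\ast\ast}$, the monotonicity of $h^{\ast\ast}$, $|E_{2}|\le t$, and the identity $t\,h^{\ast\ast}(t)=\int_{0}^{t}h^{\ast}$; and the passage to $\|g\|_{X}\le 4\|h\|_{X}$ via the Hardy--Littlewood--P\'olya principle is exactly as the paper sets it up in (\ref{Hardy}). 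Note that the paper does not actually prove this lemma --- it defers to the proof of Theorem 1.2 in \cite{Cia} and to \cite{MMpoten} --- so your write-up is a self-contained substitute. It is in the same spirit as those arguments (split the region where $g$ is sampled at the level $t$, control the far part pointwise by $h^{\ast\ast}(t)$ and the near part by the integral hypothesis), but packaging it through the supremum-over-sets characterization of $\int_{0}^{t}g^{\ast}$ is a clean way to avoid the bookkeeping with the super-level sets of $g$ and incidentally improves the constant.
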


The proof of the previous lemma is implicitly contained in the proof of
Theorem 1.2 of \cite{Cia}, a detailed proof can be found in \cite{MMpoten}.

\subsubsection{Examples}

\paragraph{\textbf{Convexifications of r.i. spaces}.}

A way to construct r.i. spaces is through the so-called p-convexification,
which is the generalization of the procedure to construct $L^{p}$ spaces, $%
1<p<\infty $, starting from $L^{1}$. If $X$ is a r.i. space, the $p-$\textbf{%
convexification} $X^{(p)}$ of $X,$ (cf. \cite{lt}) is the r.i. space defined
$X^{(p)}=\{f:\left| f\right| ^{p}\in X\}$ endowed with the following norm
\begin{equation}
\left\| f\right\| _{X^{(p)}}=\left\| \left| f\right| ^{p}\right\| _{X}^{1/p}.
\label{convessifi}
\end{equation}
The same is true for the functional $\left\| \cdot \right\|
_{X^{\left\langle p\right\rangle }}$ defined as
\begin{equation}
\left\| f\right\| _{X^{\left\langle p\right\rangle }}=\left\| \left( \left(
\left| f\right| ^{p}\right) ^{\ast \ast }\right) ^{1/p}\right\| _{X}.
\label{convessifi1}
\end{equation}
Spaces $X^{\left\langle p\right\rangle }$ have been introduced in \cite{CPL}
in connection with the study of Sobolev embeddings into
rearrangement-invariant spaces defined by a Frostman measure.

\paragraph{\textbf{The generalized Lorentz spaces $\Lambda^{p,q}(w)$}.}

Given $1\leq p,q<\infty $, and $w$ a weight (a positive locally integrable
function) on $\left( 0,\infty \right) .$ The generalized Lorentz spaces $%
\Lambda ^{p,q}(w)$ are defined by measurable functions on $\left( 0,\infty
\right) $ such that
\begin{equation}
\left\| f\right\| _{\Lambda ^{p,q}(w)}:=\left( \int_{0}^{\infty
}\left( t^{1/p}f^{\ast }(t)\right) ^{q}w(t)\frac{dt}{t}\right)
^{1/q}<\infty .\label{lornorm}
\end{equation}
If $p=q,$ we write $\Lambda ^{p}(w)$ instead $\Lambda ^{p,p}(w).$ We denote
by $W(t)=\int_{0}^{t}w(s)ds.$ It is know (see \cite{CS}) that $\Lambda
^{p,q}(w)=\Lambda ^{q}(W^{q/p-1}w)$. A weight $w$ is called a $B_{p}-$weight
if three is $C>0$ such that
\begin{equation*}
\int_{t}^{\infty }\frac{w(s)}{s^{p}}\leq \frac{C}{r^{p}}\int_{0}^{t}w(s)ds%
\text{, \ \ }t>0.
\end{equation*}
The $B_{p}$ class satisfies that $B_{r}\subset B_{p}\ $if $r\leq p$. If $%
w\in B_{p}$ then (see \cite{sw})
\begin{equation*}
\left\| f\right\| _{\Lambda ^{p}(w)}\simeq \left( \int_{0}^{\infty }f^{\ast
\ast }(t)^{p}w(t)dt\right) ^{1/p},
\end{equation*}
therefore Lorentz spaces defined by $B_{p}-$weights are r.i. spaces.
Moreover, since (see \cite[Theorem 6.5]{Ne})
\begin{equation*}
w\in B_{p}\Rightarrow W^{q/p-1}w\in B_{q}
\end{equation*}
we get that $\Lambda ^{p,q}(w)$ is a r.i. space if $w\in B_{p}.$

Given $f$ a $\mu -$measurable function on $\mathbb{R}^{n}$ we define
\begin{equation*}
\Vert f\Vert _{\Lambda _{\mu }^{{p,q}}(w)}=\left\{ f:\text{ }\Vert f_{\mu
}^{\ast }\Vert _{\Lambda ^{p.q}(w)}<\infty \right\} .
\end{equation*}

Typical examples of generalized Lorentz spaces are the $L^{p}$-spaces and
the \textbf{Lorentz spaces} $L^{p,q},\ $defined either for $p=q=1$ or $%
p=q=\infty $ or $1<p<\infty $, $1\leq q\leq \infty $ by
\begin{equation*}
\Vert f\Vert _{L_{\mu}^{p,q}}:=\left\| t^{1/p-1/q}f^{\ast }_{\mu}(t)\right\|
_{L_{[0,\infty )}^{q}}<+\infty ,
\end{equation*}
and, more generally, the \textbf{Lorentz-Zygmund spaces}, defined for $%
1<p<\infty $, $1\leq q\leq \infty $ and $\alpha \in \mathbb{R}$ by
\begin{equation*}
\Vert f\Vert _{L_{\mu}^{p,q}(\log L)^{\alpha }}:=\left\| t^{1/p-1/q}(1+\log
\left| f\right| )^{\alpha /q}f_{\mu}^{\ast }(t)\right\| _{L_{[0,\infty
)}^{q}}<+\infty
\end{equation*}

Let us notice that, in spite of the notation, the quantities $\Vert f\Vert
_{L^{p,q}}$ and $\Vert f\Vert _{L^{p,q}(\log L)^{\alpha }}$ need not be
norms; however, they can be turned into equivalent norms, when $1<p<\infty $%
, replacing $f^{\ast }$ by $f^{\ast \ast }.$

\paragraph{\textbf{The Gamma spaces $\Gamma^{p}(w)$}.}

Let $1\leq p<\infty .$ Let $w$ be an admissible weight, i.e.
\begin{equation}
\int_{0}^{t}w(s)ds<\infty \text{ and }\int_{t}^{\infty }\frac{w(s)}{s^{p}}%
ds<\infty .  \label{gweg}
\end{equation}
The Gamma space $\Gamma ^{p}(w)$ is the r.i. space defined as the set of
measurable functions such that
\begin{equation}
\left\| f\right\| _{\Gamma ^{p}(w)}:=\left( \int_{0}^{\infty
}f^{\ast \ast }(s)^{p}w(s)ds\right) ^{1/p}<\infty. \label{gammanorm}
\end{equation}
Given $f$ a $\mu -$measurable function on $\mathbb{R}^{n}$ we define
\begin{equation*}
\Vert f\Vert _{\Gamma _{\mu }^{{p}}(w)}=\left\{ f:\Vert f_{\mu }^{\ast
}\Vert _{\Gamma ^{p}(w)}<\infty \right\} .
\end{equation*}

\paragraph{\textbf{The $G\Gamma^{p}(p,m,w)$ spaces}.}

Let $1\leq p,m<\infty $ and let $w$ be a weight satisfying that
\begin{equation}
\int_{0}^{\infty }\min (s,t)^{m/p}w(t)dt<\infty ,\text{ \ \ }s>0.
\label{ggweg}
\end{equation}
The $G\Gamma (p,m,w)-$spaces are defined by
\begin{equation}
G\Gamma (p,m,w)=\left\{ f:\left\| f\right\| _{G\Gamma
(p,m,w)}=\left( \int_{0}^{\infty }\left( \int_{0}^{t}f^{\ast
}(s)^{p}ds\right) ^{m/p}w(t)dt\right) ^{1/m}<\infty \right\}
.\label{ggammanorm}
\end{equation}

These spaces\ has been introduced in \cite{FR1} in connection with
compact Sobolev type embedding results, since then its turn out to
be important and several papers devoted to the study of this spaces
have been published (see e.g. \cite{FR}, \cite{FRGKR}, \cite{GMPS}
and the references quoted therein).

\subsection{Some remarks about function spaces defined by oscillations}

In this subsection we analyze functional properties of function spaces whose
definition involves the oscillation of $f$. The principal difficulty dealing
with the functional $O_{\mu }(f,t)$ is its nonlinearity. Therefore function
spaces whose definition involves this quantity are not linear spaces.

Consider the Hardy type operator defined on positive measurable function on $%
\left( 0,\infty \right) $ by
\begin{equation*}
\bar{Q}f(t)=\int_{t}^{\infty }s^{1/D}f(s)\frac{ds}{s}.
\end{equation*}

\begin{theorem}
\label{teoind} Let $X$ be a r.i. space on $\left( 0,\infty \right) $ and let
us assume that $X$ does not contain constant functions.

\noindent i) If $\underline{\alpha }_{X}>\frac{1}{D}$, then

\begin{equation*}
\Vert t^{-1/D}f^{\ast \ast }(t)\Vert _{X}\simeq \Vert t^{-1/D}[f^{\ast \ast
}(t)-f^{\ast }(t)]\Vert _{X}.
\end{equation*}

\noindent ii) If $\bar{\alpha}_{X}<\frac{1}{D}$, then
\begin{equation*}
\Vert f\Vert _{L^{\infty }}\preceq \Vert t^{-1/D}[f^{\ast \ast }(t)-f^{\ast
}(t)]\Vert _{X}+\left\| f\right\| _{L^1_\mu+L^{\infty }}.
\end{equation*}
\end{theorem}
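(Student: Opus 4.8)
The plan is to deduce both parts from the elementary identity $\frac{d}{dt}f^{\ast\ast}(t)=\frac{f^{\ast}(t)-f^{\ast\ast}(t)}{t}$ (valid for a.e. $t>0$, since $f^{\ast\ast}$ is locally absolutely continuous), which upon integration between $t_{1}<t_{2}$ gives
\begin{equation*}
f^{\ast\ast}(t_{1})-f^{\ast\ast}(t_{2})=\int_{t_{1}}^{t_{2}}\frac{f^{\ast\ast}(s)-f^{\ast}(s)}{s}\,ds .
\end{equation*}
For part i), one inequality is immediate: since $0\le f^{\ast\ast}(t)-f^{\ast}(t)\le f^{\ast\ast}(t)$ and $\bar X$ is a Banach function space, $\Vert t^{-1/D}[f^{\ast\ast}(t)-f^{\ast}(t)]\Vert_{X}\le\Vert t^{-1/D}f^{\ast\ast}(t)\Vert_{X}$. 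For the converse I would let $t_{2}\to\infty$ above and use that $f^{\ast\ast}(\infty)=0$ for the functions under consideration -- which is exactly where the hypothesis that $X$ contains no constant functions enters -- to obtain $f^{\ast\ast}(t)=\int_{t}^{\infty}s^{-1}[f^{\ast\ast}(s)-f^{\ast}(s)]\,ds$. Putting $g(s):=s^{-1/D}[f^{\ast\ast}(s)-f^{\ast}(s)]$, the right-hand side equals $\bar Q g(t)$, so that $t^{-1/D}f^{\ast\ast}(t)=t^{-1/D}\int_{t}^{\infty}s^{1/D}g(s)\,\frac{ds}{s}=Q_{1/D}g(t)$. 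Since $\underline{\alpha}_{X}>1/D$, the operator $Q_{1/D}$ is bounded on $\bar X$, hence $\Vert t^{-1/D}f^{\ast\ast}(t)\Vert_{X}=\Vert Q_{1/D}g\Vert_{\bar X}\preceq\Vert g\Vert_{\bar X}=\Vert t^{-1/D}[f^{\ast\ast}(t)-f^{\ast}(t)]\Vert_{X}$, which gives i).

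For part ii), I would instead let $t_{1}\to0^{+}$ with $t_{2}=1$ in the same identity, getting $f^{\ast\ast}(0^{+})=f^{\ast\ast}(1)+\int_{0}^{1}s^{-1}[f^{\ast\ast}(s)-f^{\ast}(s)]\,ds$. Here $\Vert f\Vert_{L^{\infty}}=f^{\ast}(0^{+})\le f^{\ast\ast}(0^{+})$, and $f^{\ast\ast}(1)=\int_{0}^{1}f^{\ast}(s)\,ds=\Vert f\Vert_{L^{1}_{\mu}+L^{\infty}}$, so everything comes down to estimating $\int_{0}^{1}s^{-1}[f^{\ast\ast}(s)-f^{\ast}(s)]\,ds$. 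Writing its integrand as $s^{1/D-1}\bigl(s^{-1/D}[f^{\ast\ast}(s)-f^{\ast}(s)]\bigr)$ and using the generalized H\"older inequality in $\bar X$ (with associate space $\bar X'$), this integral is at most $\Vert s^{1/D-1}\chi_{(0,1)}\Vert_{\bar X'}\,\Vert t^{-1/D}[f^{\ast\ast}(t)-f^{\ast}(t)]\Vert_{X}$, so it remains only to check that $s^{1/D-1}\chi_{(0,1)}\in\bar X'$. By the duality of Boyd indices, $\bar\alpha_{X}<1/D$ is equivalent to $\underline{\alpha}_{X'}>1-1/D$, so $Q_{1-1/D}$ is bounded on $\bar X'$; applying it to $\chi_{(0,1)}\in\bar X'$ yields a function that is $\ge c\,s^{-(1-1/D)}\chi_{(0,1/2)}$ for some $c>0$, and since the leftover $s^{-(1-1/D)}\chi_{(1/2,1)}$ is bounded with support of finite measure, $s^{1/D-1}\chi_{(0,1)}\in\bar X'$ follows. (When the right-hand side of ii) is infinite there is nothing to prove, so these estimates suffice.)

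The only points I foresee needing care are the translation of the index conditions $\underline{\alpha}_{X}>1/D$ and $\bar\alpha_{X}<1/D$ into boundedness of $Q_{1/D}$ on $\bar X$ and of $Q_{1-1/D}$ on $\bar X'$ (the latter via $\underline{\alpha}_{X'}=1-\bar\alpha_{X}$), together with the behaviour of $f^{\ast\ast}$ at infinity in part i), which is precisely what the assumption that $X$ carries no constant functions is there to control.
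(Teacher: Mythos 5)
Your argument is correct and follows essentially the same route as the paper: the identity $f^{\ast\ast}(t_{1})-f^{\ast\ast}(t_{2})=\int_{t_{1}}^{t_{2}}s^{-1}[f^{\ast\ast}(s)-f^{\ast}(s)]\,ds$, the boundedness of $Q_{1/D}$ on $\bar X$ under $\underline{\alpha}_{X}>1/D$ for part i), and H\"older against $s^{1/D-1}\chi_{(0,1)}\in\bar X'$ for part ii). The only point of divergence is the verification that $s^{1/D-1}\chi_{(0,1)}\in\bar X'$: the paper does this by hand, decomposing $[0,1]$ dyadically and summing the dilation-operator norms $h_{X'}(2^{-k})$ using a $\beta$ with $1-1/D<\beta<\underline{\alpha}_{X'}$, whereas you obtain the same membership more quickly by applying the bounded operator $Q_{1-1/D}$ on $\bar X'$ to $\chi_{(0,1)}$ and comparing pointwise on $(0,1/2)$; both rest on the same duality $\underline{\alpha}_{X'}=1-\bar\alpha_{X}$, so this is a cosmetic rather than structural difference.
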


\begin{proof}
\textit{i}) Let $f\in X$. Since $X$ does not contain constant functions, $%
f^{\ast \ast }(\infty )=0.$ An elementary computation shows that
$\left(
-f^{\ast \ast }\right) ^{\prime }(t)=\frac{f^{\ast \ast }(t)-f^{\ast }(t)}{t}%
,$ thus by the fundamental theorem of Calculus we get
\begin{equation*}
f^{\ast \ast }(t)=\int_{t}^{\infty }\left( f^{\ast \ast }(t)-f^{\ast
}(t)\right) \frac{ds}{s}.
\end{equation*}
Therefore,
\begin{eqnarray*}
\Vert t^{-1/D}f^{\ast \ast }(t)\Vert _{X} &=&\left\|
t^{-1/D}\int_{t}^{\infty }\left( f^{\ast \ast }(s)-f^{\ast }(s)\right) \frac{%
ds}{s}\right\| _{X} \\
&=&\left\| t^{-1/D}\int_{t}^{\infty }s^{1/D}s^{-1/D}\left( f^{\ast
\ast
}(s)-f^{\ast }(s)\right) \frac{ds}{s}\right\| _{X} \\
&\preceq &\left\| s^{-1/D}\left( f^{\ast \ast }(s)-f^{\ast
}(s)\right) \right\| _{X}\text{ \ (since }\underline{\alpha
}_{X}>\frac{1}{D}).
\end{eqnarray*}
The converse inequality is obvious.

\textit{ii}) Let us assume now that
$\bar{\alpha}_{X}${$<\frac{1}{D}$}. We get
\begin{eqnarray*}
\Vert f\Vert _{L^{\infty }}+\left\| f\right\| _{L^1_\mu+L^{\infty }}
&=&f^{\ast \ast }(0)-f^{\ast \ast }(1)=\int_{0}^{1}\left( f^{\ast
\ast
}(s)-f^{\ast }(s)\right) \frac{ds}{s} \\
&=&\int_{0}^{1}s^{1/D}\left( s^{-1/D}\left( f^{\ast \ast
}(s)-f^{\ast
}(s)\right) \right) \frac{ds}{s} \\
&\leq &\Vert s^{-1/D}\left( f^{\ast \ast }(s)-f^{\ast }(s)\right)
\Vert _{X}\Vert s^{1/D-1}\chi _{\lbrack 0,1]}(s)\Vert _{X^{\prime
}}.
\end{eqnarray*}
It is enough to check that $\Vert s^{1/D-1}\chi _{\lbrack
0,1]}(s)\Vert
_{X^{\prime }}<\infty .$ Since $\bar{\alpha}_{X}=1-\underline{\alpha }%
_{X^{^{\prime }}}<\frac{1}{D},${\ we can select }
\begin{equation}
1-1/D<\beta <\underline{\alpha }_{X^{^{\prime }}}.  \label{salvado}
\end{equation}
By the definition of indices, there is $c>0$ such that
\begin{equation}
h_{X^{^{\prime }}}(2^{-k})\leq c2^{-k\beta }\quad \forall k\geq 0.
\label{salvo}
\end{equation}
For any $k\geq 0,$ write $I_{k}=[2^{-k-1},2^{-k}).$ Since $\chi
_{\lbrack 0,1]}(s)=\sum_{k=0}^{\infty }\chi _{I_{k}}(s),$ we get
\begin{align*}
\Vert s^{1/D-1}\chi _{\lbrack 0,1]}(s)\Vert _{X^{\prime }}& \leq
\sum_{k=0}^{\infty }\Vert s^{1/D-1}\chi _{I_{k}}(s)\Vert _{X^{\prime
}}\leq \sum_{k=0}^{\infty }2^{k\left( 1-1/D\right) }\Vert \chi
_{I_{k}}(s)\Vert
_{X^{\prime }} \\
& =\sum_{k=0}^{\infty }2^{k\left( 1-1/D\right) }\Vert D_{2^{-k}}\chi
_{\lbrack 1/2,1)}(s)\Vert _{X^{\prime }} \\
& \leq \sum_{k=0}^{\infty }2^{k\left( 1-1/D\right)
}h_{X^{\prime}}(2^{-k})\Vert \chi _{\lbrack 1/2,1)}(s)\Vert _{X^{\prime }} \\
& =\sum_{k=0}^{\infty }2^{k\left( 1-1/D-\beta \right) }2^{k\beta
}h_{X^{\prime}}(2^{-k})\Vert \chi _{\lbrack 1/2,1)}(s)\Vert _{X^{\prime }} \\
& \leq c\sum_{k=0}^{\infty }2^{k\left( 1-1/D-\beta \right) }\Vert
\chi _{\lbrack 1/2,1)}(s)\Vert _{X^{\prime }}<\infty \text{ \ (by
(\ref{salvo}) and (\ref{salvado}))}.
\end{align*}
\end{proof}

\section{Self-improvement of Sobolev inequality\label{Main}}

Let $W_{0}^{1,1}(\mathbb{R}^{n},\mu )$ be the closure of the space $%
C_{c}^{1}(\mathbb{R}^{n})$ under the norm
\begin{equation*}
\Vert u\Vert _{W_{0}^{1,1}(\mathbb{R}^{n},\mu )}=\int_{\mathbb{R}%
^{n}}(|\nabla u|+|u|)d\mu .
\end{equation*}
Inequality \eqref{ineq Cabre} with $p=1$ implies the following anisotropic
Sobolev inequality
\begin{equation}
\Vert f\Vert _{L_{\mu }^{\frac{D}{D-1}}}\preceq\sum_{i=1}^{n}\left\| \frac{%
\partial f}{\partial x_{i}}\right\| _{L_{\mu }^{1}}  \label{inq 1}
\end{equation}
for $f\in W_{0}^{1,1}(\mathbb{R}^{n},\mu )$.

\begin{theorem}
\label{main th} Let $\mu $ and $D$ be defined as in \eqref{mu} and \eqref{D}
respectively. Let $f\in W_{0}^{1,1}(\mathbb{R}^{n},\mu )$ the following
statements hold and are equivalent:

\begin{itemize}
\item[\textit{i})]  (Poincar\'{e} inequality)
\begin{equation}
\Vert f\Vert _{L_{\mu }^{\frac{D}{D-1}}}\preceq \sum_{i=1}^{n}\left\|
f_{x_{i}}\right\| _{L_{\mu }^{1}}.  \label{un}
\end{equation}

\item[\textit{ii})]  (Poincar\'{e} inequality in multiplicative form)
\begin{equation}
\Vert f\Vert _{L_{\mu }^{\frac{D}{D-1}}}\preceq \prod_{i=1}^{n}\left\|
f_{x_{i}}\right\| _{L_{\mu }^{1}}^{\frac{A_{i}+1}{D}}.  \label{dos}
\end{equation}

\item[\textit{iii})]  (Mazya-Talenti's inequality in multiplicative form)
The function $f_{\mu }^{\ast }$ is locally absolutely continuous and for all
$s>0$ we have that
\begin{equation}
s^{1-1/D}\left( -f_{\mu }^{\ast }\right) ^{\prime }(s)\preceq
\prod_{i=1}^{n}\left( \frac{d}{ds}\int_{\left\{ \left| f\right| >f_{\mu
}^{\ast }(s)\right\} }\left| f_{x_{i}}\right| d\mu \right) ^{\frac{A_{i}+1}{D%
}}.  \label{tres}
\end{equation}

\item[\textit{iv})]  (Oscillation inequality in multiplicative form) For all
$1\leq p<\infty $ and for all $t>0$ we get
\begin{equation}
\int_{0}^{t}\left( O_{\mu }(f,\cdot )^{p}\left( \cdot \right) ^{-\frac{p}{D}%
}\,\right) ^{\ast }(s)ds\preceq \int_{0}^{t}\prod_{i=1}^{n}\left[ \left(
\frac{d}{ds}\int_{\{|f|>f_{\mu }^{\ast }(s)\}}\left| f_{x_{i}}\right| d\mu
\right) ^{\ast }(\tau )\right] ^{p\left( \frac{A_{i}+1}{D}\right) }d\tau ,
\label{O}
\end{equation}
where rearrangements without subscript $\mu $ are taken with respect to
Lebesgue measure on $\left( 0,\infty \right) $.

\item[\textit{v})]
\begin{equation*}  
\Vert f\Vert _{L_{\mu }^{\frac{D}{D-1},1}}\preceq \prod_{i=1}^{n}\left\|
f_{x_{i}}\right\| _{L_{\mu }^{1}}^{\frac{A_{i}+1}{D}}.
\end{equation*}
\end{itemize}
\end{theorem}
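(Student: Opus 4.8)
The plan is to prove the cycle \textit{i}) $\Rightarrow$ \textit{ii}) $\Rightarrow$ \textit{iii}) $\Rightarrow$ \textit{iv}) $\Rightarrow$ \textit{v}) $\Rightarrow$ \textit{ii}) $\Rightarrow$ \textit{i}); once all five statements are equivalent it remains only to note that one of them holds, namely \textit{i}), which is \eqref{inq 1} and follows from \eqref{ineq Cabre} with $p=1$ (Theorem \ref{Theorem:Sobolev W1p}) via $|\nabla f|\le\sum_{i}|f_{x_{i}}|$, first on $C_{c}^{1}(\mathbb{R}^{n})$ and then on $W_{0}^{1,1}(\mathbb{R}^{n},\mu)$ by density. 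I abbreviate $\psi_{i}(s):=\int_{\{|f|>f_{\mu}^{\ast}(s)\}}|f_{x_{i}}|\,d\mu$ (nondecreasing, $\psi_{i}(0^{+})=0$, $\psi_{i}(\infty)=\Vert f_{x_{i}}\Vert_{L_{\mu}^{1}}$) and $G(s):=\prod_{i=1}^{n}(\psi_{i}'(s))^{(A_{i}+1)/D}$, and recall $\sum_{i}\tfrac{A_{i}+1}{D}=1$ since $D=n+\sum_{i}A_{i}$. For \textit{i}) $\Rightarrow$ \textit{ii}) I would exploit that $d\mu=x^{A}dx$ transforms cleanly under coordinatewise dilations: with $f_{\lambda}(x):=f(\lambda_{1}x_{1},\dots,\lambda_{n}x_{n})$ and $\Lambda:=\prod_{j}\lambda_{j}^{-(A_{j}+1)}$, a change of variables gives $\Vert f_{\lambda}\Vert_{L_{\mu}^{q}}=\Lambda^{1/q}\Vert f\Vert_{L_{\mu}^{q}}$ and $\Vert(f_{\lambda})_{x_{i}}\Vert_{L_{\mu}^{1}}=\lambda_{i}\Lambda\Vert f_{x_{i}}\Vert_{L_{\mu}^{1}}$; applying \textit{i}) to $f_{\lambda}$ with $q=\tfrac{D}{D-1}$ and cancelling the common power of $\Lambda$ yields $\Vert f\Vert_{L_{\mu}^{D/(D-1)}}\preceq\big(\prod_{j}\lambda_{j}^{-(A_{j}+1)/D}\big)\sum_{i}\lambda_{i}\Vert f_{x_{i}}\Vert_{L_{\mu}^{1}}$, and the choice $\lambda_{i}=\Vert f_{x_{i}}\Vert_{L_{\mu}^{1}}^{-1}$ collapses the right side to a constant times $\prod_{j}\Vert f_{x_{j}}\Vert_{L_{\mu}^{1}}^{(A_{j}+1)/D}$, i.e.\ \textit{ii}) (the degenerate case of some $\Vert f_{x_{i}}\Vert_{L_{\mu}^{1}}=0$ being disposed of by letting the corresponding $\lambda_{i}\to\infty$).

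For \textit{ii}) $\Rightarrow$ \textit{iii}) I would use the ``symmetrization by truncation'' principle. Given $0<t_{1}<t_{2}$, apply \textit{ii}) to $f_{t_{1}}^{t_{2}}:=\min\!\big((|f|-f_{\mu}^{\ast}(t_{2}))_{+},\,f_{\mu}^{\ast}(t_{1})-f_{\mu}^{\ast}(t_{2})\big)\in W_{0}^{1,1}(\mathbb{R}^{n},\mu)$, for which $(f_{t_{1}}^{t_{2}})_{x_{i}}=f_{x_{i}}\chi_{\{f_{\mu}^{\ast}(t_{2})<|f|<f_{\mu}^{\ast}(t_{1})\}}$ a.e.\ and $(f_{t_{1}}^{t_{2}})_{\mu}^{\ast}(s)\ge f_{\mu}^{\ast}(t_{1})-f_{\mu}^{\ast}(t_{2})$ on $(0,t_{1})$; computing both sides gives
\begin{equation*}
t_{1}^{1-1/D}\big(f_{\mu}^{\ast}(t_{1})-f_{\mu}^{\ast}(t_{2})\big)\preceq\prod_{i=1}^{n}\big(\psi_{i}(t_{2})-\psi_{i}(t_{1})\big)^{(A_{i}+1)/D}\le\sum_{i=1}^{n}\big(\psi_{i}(t_{2})-\psi_{i}(t_{1})\big),
\end{equation*}
the last step being the weighted arithmetic--geometric mean inequality. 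Since each $\psi_{i}$ is monotone and bounded, this forces $f_{\mu}^{\ast}$ to be locally absolutely continuous (a standard fact, cf.\ \cite{Cia}, \cite{MMpoten}); dividing by $t_{2}-t_{1}$ and letting $t_{2}\downarrow t_{1}$ turns the first inequality into \textit{iii}). This is the step I expect to be the main obstacle: the right truncation, the identification of its rearrangement and gradient, and especially the regularity of $f_{\mu}^{\ast}$ and the limiting procedure are the delicate points.

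For \textit{iii}) $\Rightarrow$ \textit{iv}), fix $p\ge1$ and $t>0$. From \eqref{crece}, $O_{\mu}(f,s)=\tfrac1s\int_{0}^{s}r(-f_{\mu}^{\ast})'(r)\,dr$, and \textit{iii}) gives $r(-f_{\mu}^{\ast})'(r)\preceq r^{1/D}G(r)$; hence, using $r\le s$ and Jensen's inequality, $\big(O_{\mu}(f,s)s^{-1/D}\big)^{p}\preceq(G^{p})^{\ast\ast}(s)$, while integrating \textit{iii}), using $\int_{0}^{r}h\le\int_{0}^{r}h^{\ast}$ and Hardy's inequality for $P$, gives $\int_{0}^{t}\big(O_{\mu}(f,s)s^{-1/D}\big)^{p}ds\preceq\int_{0}^{t}(G^{p})^{\ast}(s)\,ds$ for every $t$. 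So $g(s):=\big(O_{\mu}(f,s)s^{-1/D}\big)^{p}$ and $h$, a suitable multiple of $G^{p}$, satisfy the hypotheses of Lemma \ref{01}, giving $\int_{0}^{t}g^{\ast}(s)\,ds\preceq\int_{0}^{t}(G^{p})^{\ast}(s)\,ds$; as $g^{\ast}=\big(O_{\mu}(f,\cdot)^{p}(\cdot)^{-p/D}\big)^{\ast}$, and as the elementary inequality $\int_{0}^{\tau}(uv)^{\ast}\preceq\int_{0}^{\tau}u^{\ast}v^{\ast}$, iterated over the factors of $G^{p}=\prod_{i}(\psi_{i}')^{p(A_{i}+1)/D}$, bounds $\int_{0}^{t}(G^{p})^{\ast}$ by $\int_{0}^{t}\prod_{i}\big((\psi_{i}')^{\ast}(s)\big)^{p(A_{i}+1)/D}\,ds$, this is exactly \eqref{O} (and, for $p=1$, \eqref{pepe}).

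Finally, for \textit{iv}) $\Rightarrow$ \textit{v}) I would take $p=1$ in \eqref{O} and let $t\to\infty$: using $f_{\mu}^{\ast\ast}=O_{\mu}(f,\cdot)+f_{\mu}^{\ast}$ and the identity $\int_{0}^{\infty}t^{a-1}f_{\mu}^{\ast\ast}(t)\,dt=\tfrac{1}{1-a}\int_{0}^{\infty}t^{a-1}f_{\mu}^{\ast}(t)\,dt$ with $a=1-\tfrac1D$, the left side equals $\int_{0}^{\infty}O_{\mu}(f,s)s^{-1/D}\,ds=(D-1)\Vert f\Vert_{L_{\mu}^{D/(D-1),1}}$, while by the generalized H\"older inequality \eqref{holder X} in $L^{1}$ the right side is at most $\prod_{i}\Vert\psi_{i}'\Vert_{L^{1}}^{(A_{i}+1)/D}=\prod_{i}\Vert f_{x_{i}}\Vert_{L_{\mu}^{1}}^{(A_{i}+1)/D}$, which is \textit{v}). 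Then \textit{v}) $\Rightarrow$ \textit{ii}) is the embedding $L_{\mu}^{D/(D-1),1}\hookrightarrow L_{\mu}^{D/(D-1)}$, and \textit{ii}) $\Rightarrow$ \textit{i}) follows from the arithmetic--geometric mean inequality since $\tfrac{A_{i}+1}{D}\le1$; this closes the cycle, the only other slightly delicate point being the combination of a pointwise and an integral estimate through Lemma \ref{01} in step \textit{iii}) $\Rightarrow$ \textit{iv}).
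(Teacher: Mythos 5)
Your proposal is correct and follows essentially the same route as the paper: the same Tartar-type scaling for \textit{i})$\Rightarrow$\textit{ii}), the same truncation argument and limit $h\to 0$ for \textit{ii})$\Rightarrow$\textit{iii}), the same combination of a pointwise bound by $(G^{p})^{\ast\ast}$ and an integral bound by $\int_{0}^{t}(G^{p})^{\ast}$ fed into Lemma \ref{01} (plus the $\int_{0}^{t}(uv)^{\ast}\leq\int_{0}^{t}u^{\ast}v^{\ast}$ exercise) for \textit{iii})$\Rightarrow$\textit{iv}), and the same $p=1$, $t\to\infty$ computation with H\"older for \textit{iv})$\Rightarrow$\textit{v}); your splitting of the return arc into \textit{v})$\Rightarrow$\textit{ii})$\Rightarrow$\textit{i}) is only cosmetically different from the paper's \textit{v})$\Rightarrow$\textit{i}). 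The points you leave to references (local absolute continuity of $f_{\mu}^{\ast}$) are treated at the same level of detail in the paper itself.
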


\begin{proof}
$i)\Rightarrow ii)$

We follow a scaling argument as in \cite[Lemma 7]{Tar}. Indeed we apply (%
\ref{un}) to function $w(x)=f(\lambda _{1}x_{1},\cdots ,\lambda
_{n}x_{n})$ and we obtain (\ref{dos}) by choosing
\begin{equation*}
\lambda _{i}=\prod_{j\neq i}\left\| f_{x_{i}}\right\| _{L_{\mu
}^{1}}.
\end{equation*}

$ii) \Rightarrow iii)$

Since $L_{\mu }^{\frac{D}{D-1}}$ is continuously embedded in $L_{\mu }^{\frac{D%
}{D-1},\infty }$with constant $1$, inequality (\ref{dos}) implies
the weaker one
\begin{equation}
\sup_{t>0}t\left| \left\{ x\in \mathbb{R}^{n}:\left| f(x)\right|
>t\right\} \right| ^{\frac{D-1}{D}}\leq nC\prod_{i=1}^{n}\left\|
f_{x_{i}}\right\| _{L_{\mu }^{1}}^{\frac{A_{i}+1}{D}}.
\label{dosdos1}
\end{equation}
Let $0<t_{1}<t_{2}<\infty ,$ the truncations of $f$ are defined by
\begin{equation*}
f_{t_{1}}^{t_{2}}(x)=\left\{
\begin{array}{ll}
t_{2}-t_{1} & \text{if }\left| f(x)\right| >t_{2}, \\
\left| f(x)\right| -t_{1} & \text{if }t_{1}<\left| f(x)\right| \leq
t_{2},
\\
0 & \text{if }\left| f(x)\right| \leq t_{1}.
\end{array}
\right.
\end{equation*}
Observe that if $f\in W_{0}^{1,1}(\mathbb{R}^{n},\mu )$ then $%
f_{t_{1}}^{t_{2}}\in W_{0}^{1,1}(\mathbb{R}^{n},\mu ),$ therefore replacing $%
f$ by $f_{t_{1}}^{t_{2}}$ in (\ref{dosdos1}) we obtain
\begin{equation*}
\sup_{t>0}t\left| \left\{ x\in \mathbb{R}^{n}:\left|
f_{t_{1}}^{t_{2}}(x)\right| >t\right\} \right| ^{\frac{D-1}{D}}\leq
nC\prod_{i=1}^{n}\left( \int_{\mathbb{R}^{n}}\left| \frac{\partial
f_{t_{1}}^{t_{2}}}{\partial x_{i}}\right| d\mu \right)
^{\frac{A_{i}+1}{D}}.
\end{equation*}
Since
\begin{equation*}
\sup_{t>0}t\left| \left\{ x\in \mathbb{R}^{n}:\left|
f_{t_{1}}^{t_{2}}(x)\right| >t\right\} \right| ^{\frac{D-1}{D}}\geq
\left( t_{2}-t_{1}\right) \left| \left\{ x\in \mathbb{R}^{n}:\left|
f(x)\right| \geq t_{2}\right\} \right| ^{\frac{D-1}{D}},
\end{equation*}
and
\begin{equation*}
\left| \frac{\partial f_{t_{1}}^{t_{2}}}{\partial x_{i}}\right|
=\left| \frac{\partial f}{\partial x_{i}}\right| \chi _{\left\{
t_{1}<\left| f\right| \leq t_{2}\right\} }
\end{equation*}
we get
\begin{equation}
\left( t_{2}-t_{1}\right) \left| \left\{ x\in \mathbb{R}^{n}:\left|
f(x)\right| \geq t_{2}\right\} \right| ^{1-1/D}\leq
nC\prod_{i=1}^{n}\left( \int_{\left\{ t_{1}<\left| f\right| \leq
t_{2}\right\} }\left| f_{x_{i}}\right| d\mu \right)
^{\frac{A_{i}+1}{D}}.  \label{uno}
\end{equation}
Using that $\left| f_{x_{i}}\right| \leq \left| \nabla f\right| $, (\ref{uno}%
) implies
\begin{equation*}
\left( t_{2}-t_{1}\right) \left| \left\{ x\in \mathbb{R}^{n}:\left|
f(x)\right| \geq t_{2}\right\} \right| ^{1-1/D}\leq nC\int_{\left\{
t_{1}<\left| f\right| \leq t_{2}\right\} }\left| \nabla f\right|
\,d\mu
\end{equation*}
and from this inequality the locally absolutely continuity of
$f_{\mu }^{\ast }$ follows easily using the same argument as in
\cite[page 137] {MMAdv}.

Let $s>0$ and $h>0,$ pick $t_{1}=f_{\mu }^{\ast }(s+h),$
$t_{2}=f_{\mu }^{\ast }(s),$ then by (\ref{uno}) we get
\begin{equation*}
\left( f_{\mu }^{\ast }(s)-f_{\mu }^{\ast }(s+h)\right)
s^{1-1/D}\leq nC\prod_{i=1}^{n}\left( \int_{\left\{ f_{\mu }^{\ast
}(s+h)<\left| f\right|
\leq f_{\mu }^{\ast }(s)\right\} }\left| f_{x_{i}}\right| d\mu \right) ^{%
\frac{A_{i}+1}{D}}.
\end{equation*}
Thus,
\begin{equation*}
\frac{\left( f_{\mu }^{\ast }(s)-f_{\mu }^{\ast }(s+h)\right) }{h}%
s^{1-1/D}\leq nC\prod_{i=1}^{n}\left( \frac{1}{h}\int_{\left\{
f_{\mu }^{\ast }(s+h)<\left| f\right| \leq f_{\mu }^{\ast
}(s)\right\} }\left| f_{x_{i}}\right| d\mu \right)
^{\frac{A_{i}+1}{D}}.
\end{equation*}
Letting $h\rightarrow 0$ we obtain (\ref{tres}).

$iii) \Rightarrow iv)$

Let $1\leq p<\infty .$ For $0<s<t,$ we get
\begin{align}
O_{\mu }(f,t)& =\frac{1}{t}\int_{0}^{t}\left( f_{\mu }^{\ast
}(s)-f_{\mu
}^{\ast }(t)\right) ds  \notag \\
& =\frac{1}{t}\int_{0}^{t}\left( \int_{s}^{t}\left( -f_{\mu }^{\ast
}\right)
^{\prime }(\tau )\,d\tau \right) ds  \notag \\
& =\frac{1}{t}\int_{0}^{t}s\left( -f_{\mu }^{\ast }\right) ^{^{\prime }}(s)ds%
\text{ }  \notag \\
& \leq \frac{t^{1/D}}{t}\int_{0}^{t}s^{1-1/D}\left( -f_{\mu }^{\ast
}\right)
^{^{\prime }}(s)ds  \notag \\
\!\!& \leq nC\frac{t^{1/D}}{t}\int_{0}^{t}\prod_{i=1}^{n}\left( \frac{d}{%
d\tau }\int_{\left\{ \left| f\right| >f_{\mu }^{\ast }(\tau
)\right\}
}\left| f_{x_{i}}\right| d\mu \right) ^{\frac{A_{i}+1}{D}}ds\text{ \ (by (%
\ref{tres}))}  \notag \\
\!\!& \!\!\leq nC\frac{t^{1/D}}{t}\int_{0}^{t}\left(
\prod_{i=1}^{n}\left[ \frac{d}{d\tau }\int_{\left\{ \left| f\right|
>f_{\mu }^{\ast }(\tau )\right\} }\left| f_{x_{i}}\right| d\mu
\right] ^{\frac{A_{i}+1}{D}}\right)
^{\ast }(s)ds\text{ \ {(by \eqref{HL})}}  \notag \\
& \leq nCt^{1/D}\left( \frac{1}{t}\int_{0}^{t}\left(
\prod_{i=1}^{n}\left[ \frac{d}{d\tau }\int_{\left\{ \left| f\right|
>f_{\mu }^{\ast }(\tau )\right\} }\left| f_{x_{i}}\right| d\mu
\right] ^{p\frac{A_{i}+1}{D}}\right) ^{\ast }(s)ds\right)
^{1/p}\text{ (by H\"{o}lder).}  \notag
\end{align}
Thus
\begin{equation}
\left( t^{-1/D}O_{\mu }(f,t)\right) ^{p}\preceq \frac{1}{t}%
\int_{0}^{t}\left( \prod_{i=1}^{n}\left[ \frac{d}{d\tau
}\int_{\left\{ \left| f\right| >f_{\mu }^{\ast }(\tau )\right\}
}\left| f_{x_{i}}\right| d\mu \right] ^{p\frac{A_{i}+1}{D}}\right)
^{\ast }(s)ds.  \label{arem}
\end{equation}
Notice that the previous computation shows that
\begin{equation}
O_{\mu }(f,t)=\frac{1}{t}\int_{0}^{t}s\left( -f_{\mu }^{\ast
}\right) ^{^{\prime }}(s)\,ds.  \label{1111}
\end{equation}
If $p=1$, then{\ using \eqref{1111} and \eqref{arem} we get}
\begin{align*}
\int_{0}^{t}O_{\mu }(f,s)s^{-1/D}ds&
=\int_{0}^{t}\frac{s^{-1/D}}{s}\left(
\int_{0}^{s}z\left( -f_{\mu }^{\ast }\right) ^{^{\prime }}(z)dz\right) ds \\
& =\int_{0}^{t}z\left( -f_{\mu }^{\ast }\right) ^{^{\prime
}}(z)\left(
\int_{z}^{t}\frac{s^{-1/D}}{s}ds\right) dz \\
& \leq \int_{0}^{t}z\left( -f_{\mu }^{\ast }\right) ^{^{\prime
}}(z)\left(
\int_{z}^{\infty }\frac{s^{-1/D}}{s}ds\right) dz \\
& =D\int_{0}^{t}z^{1-1/D}\left( -f_{\mu }^{\ast }\right) ^{^{\prime
}}(z)dz
\\
& \leq nCD\int_{0}^{t}\left( \prod_{i=1}^{n}\left[ \frac{d}{d\tau }%
\int_{\left\{ \left| f\right| >f_{\mu }^{\ast }(\cdot )\right\}
}\left| f_{x_{i}}(x)\right| d\mu (x)\right]
^{\frac{A_{i}+1}{D}}\right) ^{\ast }(s)\,ds.
\end{align*}
If $1<p<\infty ,$ then u{sing (\ref{arem}), Hardy's Inequalities
(see \cite[page 124]{BS}) and (\ref{1111}) we get}
\begin{align*}
\int_{0}^{t}\left( O_{\mu }(f,s)s^{-1/D}\right) ^{p}ds&
=\int_{0}^{t}\left( \frac{s^{-1/D}}{s}\int_{0}^{s}z\left( -f_{\mu
}^{\ast }\right) ^{^{\prime
}}(z)dz\right) ^{p}ds \\
& \leq \int_{0}^{t}\left( \frac{1}{s}\int_{0}^{s}z^{1-1/D}\left(
-f_{\mu
}^{\ast }\right) ^{^{\prime }}(z)dz\right) ^{p}ds= \\
& \preceq \int_{0}^{t}\left( z^{1-1/D}\left( -f_{\mu }^{\ast
}\right)
^{^{\prime }}(z)\right) ^{p}dz \\
& \preceq \int_{0}^{t}\left( \prod_{i=1}^{n}\left[ \frac{d}{d\tau }%
\int_{\left\{ \left| f\right| >f_{\mu }^{\ast }(\cdot )\right\}
}\left| f_{x_{i}}(x)\right| d\mu (x)\right]
^{p\frac{A_{i}+1}{D}}\right) ^{\ast }(s)\,ds.
\end{align*}
By Lemma \ref{01} and \cite[Exercise 10, page 88]{BS}, we get
\begin{eqnarray*}
\int_{0}^{t}\left( O_{\mu }(f,\cdot )^{p}\left( \cdot \right) ^{-\frac{p}{D}%
}\,\right) ^{\ast }(s)\,ds &\preceq &\int_{0}^{t}\left(
\prod_{i=1}^{n}\left[ \frac{d}{d\tau }\int_{\left\{ \left| f\right|
>f_{\mu }^{\ast }(\cdot
)\right\} }\left| f_{x_{i}}(x)\right| d\mu (x)\right] ^{p\frac{A_{i}+1}{D}%
}\right) ^{\ast }(s)\,ds \\
&\leq &\int_{0}^{t}\prod_{i=1}^{n}\left( \left[ \frac{d}{d\tau }%
\int_{\left\{ \left| f\right| >f_{\mu }^{\ast }(\cdot )\right\}
}\left| f_{x_{i}}(x)\right| d\mu (x)\right]
^{p\frac{A_{i}+1}{D}}\right) ^{\ast }(s)\,ds
\end{eqnarray*}

$iv)\Rightarrow v)$

By (\ref{Hardy}) and (\ref{holder X})$,$ we obtain
\begin{align}
\left\| O_{\mu }(f,s)s^{-\frac{1}{D}}\right\| _{L^{1}}& \leq
4nCD\left\| \prod_{i=1}^{n}\left( \left[ \frac{d}{d\tau
}\int_{\left\{ \left| f\right|
>f_{\mu }^{\ast }(\tau )\right\} }\left| f_{x_{i}}\right| d\mu \right] ^{%
\frac{A_{i}+1}{D}}\right) ^{\ast }(s)\right\| _{L^{1}}  \label{a01} \\
& \leq 4nCD\prod_{i=1}^{n}\left\| \left( \frac{d}{d\tau
}\int_{\left\{ \left| f\right| >f_{\mu }^{\ast }(\tau )\right\}
}\left| f_{x_{i}}\right| d\mu \right) ^{\ast }(s)\right\|
_{L^{1}}^{\frac{A_{i}+1}{D}}.  \notag
\end{align}
Moreover we get
\begin{eqnarray}  \label{55}
\left\| \left( \frac{d}{d\tau }\int_{\left\{ \left| f\right| >f_{\mu
}^{\ast }(\tau )\right\} }\left| f_{x_{i}}\right| d\mu \right)
^{\ast }(s)\right\| _{L^{1}} &=&\int_{0}^{\infty }\left(
\frac{d}{d\tau }\int_{\left\{ \left| f\right| >f_{\mu }^{\ast }(\tau
)\right\} }\left| f_{x_{i}}\right| d\mu
\right) ^{\ast }(s)\, ds  \notag \\
&=&\int_{0}^{\infty }\frac{d}{d\tau }\left( \int_{\left\{ \left|
f\right|
>f_{\mu }^{\ast }(\tau )\right\} }\left| f_{x_{i}}\right| d\mu \right) d\tau
\notag \\
&=&\int_{\mathbb{R}^{n}}\left| f_{x_{i}}\right| d\mu .
\end{eqnarray}
%

Taking into account that $\frac{\partial }{\partial t}f_{\mu }^{\ast
\ast }(t)=-(f_{\mu }^{\ast \ast }(t)-f_{\mu }^{\ast }(t))/t$ and
$f_{\mu }^{\ast \ast }(\infty )=0$ (since $f\in
W_{0}^{1,1}(\mathbb{R}^{n},\mu ))$ by the Fundamental Theorem
Calculus we have
\begin{equation*}
f_{\mu }^{\ast \ast }(t)=\int_{t}^{\infty }\left( f_{\mu }^{\ast
\ast }(\tau )-f_{\mu }^{\ast }(\tau )\right) \,\frac{d\tau }{\tau }.
\end{equation*}
Therefore
\begin{align*}
\Vert f\Vert _{L^{\frac{D}{D-1},1}}:=\int_{0}^{\infty }f_{\mu
}^{\ast \ast }(s)s^{-\frac{1}{D}}\,ds& =\int_{0}^{\infty }\left[
\int_{s}^{\infty }\left(
f_{\mu }^{\ast \ast }(\tau )-f_{\mu }^{\ast }(\tau )\right) \,\frac{d\tau }{%
\tau }\right] s^{-\frac{1}{D}}\,ds \\
& =\int_{0}^{\infty }\left( f_{\mu }^{\ast \ast }(\tau )-f_{\mu
}^{\ast
}(\tau )\right) \,\left[ \frac{1}{\tau }\int_{0}^{\tau }s^{-\frac{1}{D}}\,ds%
\right] \,d\tau \\
& =\frac{D}{D-1}\int_{0}^{\infty }\left( f_{\mu }^{\ast \ast }(\tau
)-f_{\mu }^{\ast }(\tau )\right) \tau ^{-\frac{1}{D}}\,d\tau .
\end{align*}
{Conclusion follows by (\ref{a01}) and \eqref{55}.}

$v)\Rightarrow i)$

It is consequence of the continuous embedding of Lorentz space $L_{\mu }^{%
\frac{D}{D-1},1}$ into Lebesgue space $L_{\mu }^{\frac{D}{D-1}}$ and
the relation between weighted arithmetic and geometric mean.
\end{proof}

\begin{remark}
As in the classical case we have that \eqref{inq 1} implies a better
inequality involving the Lorentz norm $\Vert \cdot\Vert _{L^{\frac{D}{D-1}%
,1}}$(see \textit{e.g.} \cite{C_optimal}, \cite{C2018}, \cite{MMP07} and the
bibliography therein).
\end{remark}

\begin{remark}
{By (\ref{arem}) we get }

\begin{equation}  \label{99}
t^{-\frac{1}{D}}O_{\mu }(f,t)\preceq \frac{1}{t}\int_{0}^{t}\prod_{i=1}^{n}%
\left( \frac{d}{d\tau }\int_{\left\{ \left| f\right| >f_{\mu }^{\ast }(\tau
)\right\} }\left| f_{x_{i}}\right| d\mu \right) ^{\frac{A_{i}+1}{D}}ds:=I(t),
\end{equation}
that is a pointwise oscillation inequality in multiplicative form. Moreover
by \eqref{99} we recover the classical oscillation inequality (see \cite
{MMP07} and \cite{MMAdv})
\begin{equation*}
t^{-\frac{1}{D}}O_{\mu }(f,t)\preceq \left| \nabla f\right| ^{\ast \ast }(t).
\end{equation*}
As matter of the fact by H\"{o}lder inequality
\begin{eqnarray}  \label{77}
I(t) &\leq &\prod_{i=1}^{n}\left( \frac{1}{t}\int_{0}^{t}\left( \frac{d}{ds}%
\int_{\{|f|>f_{\mu }^{\ast }(s)\}}\left| f_{x_{i}}\right| d\mu \right)
ds\right) ^{\frac{A_{i}+1}{D}}  \\
&=&\prod_{i=1}^{n}\left( \frac{1}{t}\int_{\{|f|>f_{\mu }^{\ast }(t)\}}\left|
f_{x_{i}}\right| d\mu \right) ^{\frac{A_{i}+1}{D}}  \notag \\
&\leq &\prod_{i=1}^{n}\left( \frac{1}{t}\int_{0}^{t}\left| f_{x_{i}}\right|
_{\mu }^{\ast }(s)ds\right) ^{\frac{A_{i}+1}{D}} \text{{\ (by \eqref{HL})}}
\notag \\
&\preceq &\left| \nabla f\right| ^{\ast \ast }(t). \notag
\end{eqnarray}
\end{remark}

\section{Anisotropic inequalities in rearrangement invariant spaces\label%
{applica}}

%
{In this section starting from the oscillation inequality \eqref{O} we
derive some anisotropic inequalities in $\mathbb{R}^n$ in the general
setting of rearrangement invariant spaces. }

We will use throughout this Section the following notation
\begin{equation*}
(\tilde{f}_{x_{i}})_{\mu}^{*}(t):=\left( \frac{d}{ds}\int_{\{|f|>f_{\mu
}^{\ast }(s)\}}\left| f_{x_{i}}\right| d\mu \right) ^{\ast }(t).
\end{equation*}

In order to prove these kind of results we need the following results.

\begin{lemma}
\label{pesos}Let $1<p<\infty $ and let $v$ be a weight (a positive locally
integrable function on $\left( 0,\infty \right) ).$ Let
\begin{equation*}
u(t)=\frac{\partial }{\partial t}\left( 1+\int_{t}^{1}\frac{v^{\frac{-1}{p-1}%
}(s)}{s^{\frac{p}{p-1}}}ds\right) ^{1-p}.
\end{equation*}
Then, there exists $C>0$ such that
\begin{equation*}
\left( \int_{0}^{1}f^{\ast \ast }(s)^{p}u(s)ds\right) ^{1/p}\leq C\left(
\int_{0}^{1}\left( O_{\mu }(f,s)\right) ^{p}v(s)ds\right) ^{1/p}+\left(
\int_{0}^{1}u(s)ds\right) ^{1/p}\int_{0}^{1}f^{\ast }(t)ds.
\end{equation*}
\end{lemma}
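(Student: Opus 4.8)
The plan is to relate $f^{\ast\ast}$ to the oscillation $O_\mu(f,s)=f^{\ast\ast}_\mu(s)-f^{\ast}_\mu(s)$ via the identity $(-f^{\ast\ast}_\mu)'(s)=O_\mu(f,s)/s$, and then to recognize the weight $u$ as exactly the one produced by the weighted Hardy inequality dual to the passage $O_\mu\mapsto f^{\ast\ast}_\mu$. First I would fix $f$ and integrate the identity $(-f^{\ast\ast}_\mu)'(s)=O_\mu(f,s)/s$ from $t$ to $1$ to obtain, for $0<t\le 1$,
\begin{equation*}
f^{\ast\ast}_\mu(t)=f^{\ast\ast}_\mu(1)+\int_t^1 O_\mu(f,s)\,\frac{ds}{s}\le f^{\ast}_\mu(1)+\int_0^1 O_\mu(f,s)\chi_{(t,1)}(s)\,\frac{ds}{s},
\end{equation*}
where I used $f^{\ast\ast}_\mu(1)\le \frac1{1}\int_0^1 f^{\ast}_\mu\le \int_0^1 f^{\ast}_\mu(s)\,ds$ (or simply bound $f^{\ast\ast}_\mu(1)$ by $\int_0^1 f^{\ast}_\mu$, which is the last term in the claimed inequality). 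Thus $f^{\ast\ast}_\mu(t)$ is bounded by a constant term plus the Hardy-type operator $g\mapsto \int_t^1 g(s)\,ds/s$ applied to $g=O_\mu(f,\cdot)$ on $(0,1)$.

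Next I would invoke the classical weighted Hardy inequality on $(0,1)$ for the operator $T g(t)=\int_t^1 g(s)\,ds/s$: the inequality
\begin{equation*}
\left(\int_0^1 (Tg)(t)^p\,u(t)\,dt\right)^{1/p}\le C\left(\int_0^1 g(s)^p\,v(s)\,ds\right)^{1/p}
\end{equation*}
holds, with a constant $C$ depending only on $p$ and on the finiteness of the Muckenhoupt-type quantity
\begin{equation*}
\sup_{0<r<1}\left(\int_0^r u(t)\,dt\right)^{1/p}\left(\int_r^1 s^{-p'}v(s)^{1-p'}\,ds\right)^{1/p'}<\infty,
\end{equation*}
and the point is that the \emph{specific} weight $u$ displayed in the statement,
\begin{equation*}
u(t)=\frac{\partial}{\partial t}\left(1+\int_t^1 \frac{v^{-1/(p-1)}(s)}{s^{p/(p-1)}}\,ds\right)^{1-p},
\end{equation*}
is precisely the extremal choice that makes this Muckenhoupt condition hold (indeed, with $W(t):=1+\int_t^1 s^{-p'}v(s)^{1-p'}ds$ one has $u=(W^{1-p})'$, so $\int_0^r u=W(r)^{1-p}-W(0)^{1-p}\le W(r)^{1-p}$ while $\int_r^1 s^{-p'}v^{1-p'}=W(r)-1\le W(r)$, and the product of $(W(r)^{1-p})^{1/p}$ with $(W(r))^{1/p'}$ equals $W(r)^{(1-p)/p+1/p'}=1$). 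Hence the Hardy constant is bounded by a universal multiple of $1$, and in particular $u$ is a genuine weight with $\int_0^1 u<\infty$.

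Assembling the pieces: from $f^{\ast\ast}_\mu(t)\le \int_0^1 f^{\ast}_\mu + (Tg)(t)$ with $g=O_\mu(f,\cdot)\chi_{(0,1)}$, the triangle inequality in $L^p(u\,dt)$ on $(0,1)$ gives
\begin{equation*}
\left(\int_0^1 f^{\ast\ast}_\mu(s)^p u(s)\,ds\right)^{1/p}\le \left(\int_0^1 u(s)\,ds\right)^{1/p}\int_0^1 f^{\ast}_\mu(s)\,ds + \left(\int_0^1 (Tg)^p u\right)^{1/p},
\end{equation*}
and the last term is $\le C\left(\int_0^1 O_\mu(f,s)^p v(s)\,ds\right)^{1/p}$ by the weighted Hardy inequality, which is exactly the claimed estimate (writing $f^{\ast\ast}$ for $f^{\ast\ast}_\mu$ and $f^{\ast}$ for $f^{\ast}_\mu$ as in the paper's convention when no confusion arises). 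The only genuinely technical point — the main obstacle — is verifying the Muckenhoupt balance condition for the displayed $u$; this is the short computation sketched above with $W(t)=1+\int_t^1 s^{-p'}v(s)^{1-p'}\,ds$, using $W'(t)=-t^{-p'}v(t)^{1-p'}$ and $u(t)=(1-p)W(t)^{-p}W'(t)\ge 0$, after which everything else is a routine application of the classical one-dimensional weighted Hardy inequality (see e.g. \cite{BS}) together with the fundamental theorem of calculus identity for $f^{\ast\ast}_\mu$.
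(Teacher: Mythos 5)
Your argument is correct and essentially coincides with the paper's: the balance-condition computation with $W(t)=1+\int_t^1 s^{-p'}v(s)^{1-p'}\,ds$, giving $\bigl(\int_0^t u\bigr)^{1/p}\bigl(\int_t^1 s^{-p'}v^{1-p'}\bigr)^{1/p'}\le 1$, is exactly the single display in the paper's proof, which then concludes by citing Lemma 5.4 of Bastero--Milman--Ruiz. Your fundamental-theorem-of-calculus identity $f_\mu^{\ast\ast}(t)=f_\mu^{\ast\ast}(1)+\int_t^1 O_\mu(f,s)\,ds/s$ followed by the classical weighted Hardy inequality is precisely the content of that cited lemma, so you have in effect reproved the black box rather than taken a different route.
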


\begin{proof}
Since
\begin{eqnarray*}
\left( \int_{0}^{t}u(s)ds\right) ^{1/p}\left( \int_{t}^{1}\frac{v(s)^{\frac{%
-1}{p-1}}}{s^{\frac{p}{p-1}}}ds\right) ^{\left( p-1\right) /p} &\leq
&\left(
1+\int_{t}^{1}\frac{v^{\frac{-1}{p-1}}}{s^{\frac{p}{p-1}}}ds\right)
^{(1-p)/p}\left( \int_{t}^{1}\frac{v(s)^{\frac{-1}{p-1}}}{s^{\frac{p}{p-1}}}%
ds\right) ^{\left( p-1\right) /p} \\
&\leq &1,
\end{eqnarray*}
the result follows from \cite[Lemma 5.4]{BMR}.
\end{proof}

\begin{lemma}
\label{rakoto}(see \cite{rakotoson} and \cite{AT}) Let $f\in W_{0}^{1,1}(%
\mathbb{R}^{n},\mu ),$ then
\begin{equation*}
\int_{0}^{t}(\tilde{f}_{x_{i}})_{\mu}^{*}(\tau )d\tau \leq
\int_{0}^{t}\left| f_{x_{i}}\right| _{\mu }^{\ast }(\tau )d\tau ,\text{ \ \ (%
}t\geq 0)
\end{equation*}
therefore by (\ref{Hardy}) for any r.i space $X$ on $(\mathbb{R}^{n},\mu )$
we have that
\begin{align*}
\left\| (\tilde{f}_{x_{i}})_{\mu}^{*}\right\| _{\bar{X}}& \leq \left\|
\left| f_{x_{i}}\right| _{\mu }^{\ast }\right\| _{\bar{X}} =\left\|
f_{x_{i}}\right\| _{X}.
\end{align*}
\end{lemma}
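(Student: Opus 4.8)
The plan is to deduce the norm inequality from the integral (Hardy--Littlewood--P\'olya) inequality that the lemma also states, and then to prove that integral inequality by a level--set argument resting on \eqref{HL}. For the reduction I would note that both $(\tilde f_{x_{i}})_{\mu}^{*}$ and $|f_{x_{i}}|_{\mu}^{\ast}$ are already non--increasing on $(0,\infty)$, hence equal to their own decreasing rearrangements; therefore, once the inequality $\int_{0}^{t}(\tilde f_{x_{i}})_{\mu}^{*}(\tau)\,d\tau\leq\int_{0}^{t}|f_{x_{i}}|_{\mu}^{\ast}(\tau)\,d\tau$ is in hand for every $t>0$, the principle \eqref{Hardy} applied in the representation space $\bar{X}$ on $(0,\infty)$ gives immediately $\|(\tilde f_{x_{i}})_{\mu}^{*}\|_{\bar{X}}\leq\||f_{x_{i}}|_{\mu}^{\ast}\|_{\bar{X}}=\|f_{x_{i}}\|_{X}$. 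So the real content is this integral inequality.

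To prove it, I would put $\psi(s):=\int_{\{|f|>f_{\mu}^{\ast}(s)\}}|f_{x_{i}}|\,d\mu$ and $g:=\psi'$, so that $(\tilde f_{x_{i}})_{\mu}^{*}=g^{\ast}$; since $f_{\mu}^{\ast}$ is non--increasing the super--level sets $\{|f|>f_{\mu}^{\ast}(s)\}$ increase with $s$, hence $\psi$ is non--decreasing and $g\geq 0$. Using the elementary fact that, for a non--negative function, $\int_{0}^{t}g^{\ast}(\tau)\,d\tau=\sup\{\int_{E}g\,ds:\ E\subset(0,\infty),\ |E|=t\}$, it is enough to show $\int_{E}g\,ds\leq\int_{0}^{t}|f_{x_{i}}|_{\mu}^{\ast}(\tau)\,d\tau$ for an arbitrary measurable $E$ with $|E|=t$. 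The key observation is that $g$ is an averaging of $|f_{x_{i}}|$ over the level sets of $|f|$: with $F(\lambda):=\int_{\{|f|>\lambda\}}|f_{x_{i}}|\,d\mu$ we have $\psi=F\circ f_{\mu}^{\ast}$, and the change of variable $\lambda=f_{\mu}^{\ast}(s)$ rewrites $\int_{E}g\,ds$ as $\int_{G}(-F'(\lambda))\,d\lambda=\int_{\{|f|\in G\}}|f_{x_{i}}|\,d\mu$, where $G=f_{\mu}^{\ast}(E)$; applying the same change of variable to $s=\mu\{|f|>\lambda\}$ shows $\mu\{|f|\in G\}=|E|=t$. Then \eqref{HL}, applied to $|f_{x_{i}}|$ and $\chi_{\{|f|\in G\}}$, yields $\int_{E}g\,ds=\int_{\{|f|\in G\}}|f_{x_{i}}|\,d\mu\leq\int_{0}^{t}|f_{x_{i}}|_{\mu}^{\ast}(\tau)\,d\tau$, which is exactly what is needed. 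A hands--on alternative to the change of variable is to approximate $E$ from inside by a finite union of disjoint intervals $(a_{j},b_{j})$ lying in the strictly decreasing part of $f_{\mu}^{\ast}$ (where $g$ is supported): on each, $\int_{a_{j}}^{b_{j}}g\,ds\leq\psi(b_{j})-\psi(a_{j})=\int_{B_{j}}|f_{x_{i}}|\,d\mu$ with $B_{j}=\{f_{\mu}^{\ast}(b_{j})<|f|\leq f_{\mu}^{\ast}(a_{j})\}$, the $B_{j}$ are pairwise disjoint with $\sum_{j}\mu(B_{j})\leq\sum_{j}(b_{j}-a_{j})$ by the relation $\mu\{|f|>f_{\mu}^{\ast}(s)\}\leq s$, and \eqref{HL} closes the estimate as before.

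The step I expect to be the real obstacle is purely measure--theoretic: justifying the change of variable (or the interval argument) for a general $f\in W_{0}^{1,1}(\mathbb{R}^{n},\mu)$, which forces one to handle the plateaus of $f_{\mu}^{\ast}$, the atoms of the distribution function of $f$, and the critical set $\{|\nabla f|=0\}$. On a plateau of $f_{\mu}^{\ast}$ the function $\psi$ is constant, so $g$ vanishes there and the atoms cause no harm; and on the strictly decreasing part the relevant identities are, for $f\in C_{c}^{1}(\mathbb{R}^{n})$, instances of the coarea formula, e.g.\ $-F'(\lambda)=\int_{\{|f|=\lambda\}}|f_{x_{i}}|\,|\nabla f|^{-1}x^{A}\,d\mathcal{H}^{n-1}$ for a.e.\ $\lambda$. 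The complete bookkeeping, including the passage from $C_{c}^{1}(\mathbb{R}^{n})$ to $W_{0}^{1,1}(\mathbb{R}^{n},\mu)$, is precisely what is carried out in \cite{rakotoson} and \cite{AT}; the $L^{1}$ endpoint $\int_{0}^{\infty}(\tilde f_{x_{i}})_{\mu}^{*}=\int_{\mathbb{R}^{n}}|f_{x_{i}}|\,d\mu$ of the inequality is the elementary identity already recorded in \eqref{55}.
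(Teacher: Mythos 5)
Your proposal is correct and follows essentially the same route as the paper: the paper gives no proof of the integral inequality beyond citing \cite{rakotoson} and \cite{AT}, and it obtains the norm inequality exactly as you do, by applying the Hardy--Littlewood--P\'olya principle (\ref{Hardy}) to the two non-increasing functions in the representation space $\bar{X}$. Your additional level-set sketch of the integral inequality, with the caveats about plateaus of $f_{\mu }^{\ast }$ and the vanishing of $\nabla f$ a.e.\ on level sets of positive measure, is a faithful reconstruction of the argument carried out in those references, so nothing essential is missing.
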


\subsection{Convexification of r.i. spaces}

\subsubsection{The $X^{(q)}$ convexification}

\ Let $X$ be a r.i. space on $\mathbb{R}^{n}$ and $X^{(q)}$ its $q-$%
con\-ve\-xi\-fication defined in (\ref{convessifi}). In the next
theorem we state some anisotropic inequalities for functions $f$
such that $f_{x_{i}}\in X^{(p_{i})} $ with $p_i\geq 1$ for
$i=1,\cdots,n$.

\begin{theorem}
\label{th2} Let $X$ be a r.i. space on $(\mathbb{R}^{n},\mu)$ and $f\in{\
W_{0}^{1,1}(\mathbb{R}^{n},\mu)}$. If $p_{1},\cdots ,p_{n}\geq 1$, then
\begin{equation}
\Vert t^{-1/D}[f_{\mu }^{\ast \ast }(t)-f_{\mu }^{\ast }(t)]\Vert _{\bar{X}%
^{(\overline{p})}}\preceq\prod_{i=1}^{n}\left\| f_{x_{i}}\right\|
_{X^{(p_{i})}}^{\frac{A_{i}+1}{D}},  \label{Xq}
\end{equation}
where $\overline{p}$ and $D$ are defined in \eqref{harm} and in \eqref{D},
respectively {and the involved norms are defined as in (\ref{convessifi}).
Moreover }

\noindent i) if $\underline{\alpha}_{X}>\frac{\overline{p}}{D}$, then
\begin{equation}
\Vert t^{-1/D}f_{\mu }^{\ast \ast }(t)\Vert_{\bar{X}^{(\overline{p}
)}}\preceq\,\prod_{i=1}^{n}\left\|f_{x_{i}}\right\|_{X^{(p_{i})}}^{\frac{
A_{i}+1}{D}};  \label{emb1}
\end{equation}

\noindent ii) if $\bar{\alpha}_{X}<\frac{\overline{p}}{D}$, then
\begin{equation}
\Vert f\Vert _{L^{\infty }}\preceq\,\prod_{i=1}^{n}\left\|
f_{x_{i}}\right\|_{X^{(p_{i})}}^{\frac{A_{i}+1}{D}}+\left\|f\right\|
_{L^1_{\mu}+L^{\infty }}.  \label{emb2}
\end{equation}
\end{theorem}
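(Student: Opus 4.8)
The plan is to derive all three inequalities from the oscillation inequality \eqref{O} of Theorem~\ref{main th}, combined with Hardy's inequality in r.i.\ spaces (\ref{Hardy}), the H\"older inequality (\ref{holder X}), and Lemma~\ref{rakoto}. First I would establish the main estimate \eqref{Xq}. Starting from \eqref{O} with the exponent $p=\overline{p}$, the left-hand side controls $\int_{0}^{t}\bigl(O_\mu(f,\cdot)^{\overline{p}}(\cdot)^{-\overline{p}/D}\bigr)^\ast(s)\,ds$, while the right-hand side is $\int_{0}^{t}\prod_{i=1}^n\bigl[(\widetilde{f}_{x_i})_\mu^\ast(\tau)\bigr]^{\overline{p}(A_i+1)/D}\,d\tau$. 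By the Hardy--Littlewood--P\'olya principle (\ref{Hardy}), applied in the representation space $\bar X^{(\overline p)}$ (equivalently, applying (\ref{Hardy}) to the r.i.\ space $\bar X$ after raising to the power $1/\overline p$), this yields
\begin{equation*}
\bigl\| t^{-1/D}O_\mu(f,t)\bigr\|_{\bar X^{(\overline p)}}
\preceq \Bigl\| \prod_{i=1}^n \bigl[(\widetilde{f}_{x_i})_\mu^\ast(t)\bigr]^{(A_i+1)/D} \Bigr\|_{\bar X^{(\overline p)}}.
\end{equation*}
Now I would unravel the convexification norm: writing $\theta_i = (A_i+1)p_i/(\overline{p}\,D)$, one checks from \eqref{harm} that $\sum_i \theta_i = 1$, and that $\prod_i g_i^{(A_i+1)/D}$ raised to the power $\overline p$ equals $\prod_i (g_i^{p_i})^{\theta_i}$. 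Hence the right-hand side equals $\bigl\| \prod_i (g_i^{p_i})^{\theta_i} \bigr\|_{\bar X}^{1/\overline p}$ with $g_i = (\widetilde{f}_{x_i})_\mu^\ast$, and Lemma~1.4 (inequality (\ref{holder X})) bounds this by $\prod_i \|g_i^{p_i}\|_{\bar X}^{\theta_i/\overline p} = \prod_i \|g_i\|_{\bar X^{(p_i)}}^{(A_i+1)/D}$. Finally Lemma~\ref{rakoto} gives $\|(\widetilde{f}_{x_i})_\mu^\ast\|_{\bar X^{(p_i)}} \le \|f_{x_i}\|_{X^{(p_i)}}$ (apply the lemma with $X^{(p_i)}$ in place of $X$, since Lemma~\ref{rakoto} holds for any r.i.\ space), which produces \eqref{Xq}.

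For parts i) and ii) I would invoke Theorem~\ref{teoind} applied to the r.i.\ space $\bar X^{(\overline p)}$ on $(0,\infty)$. The Boyd index computation $\underline\alpha_{\bar X^{(\overline p)}} = \underline\alpha_X/\overline p$ and $\bar\alpha_{\bar X^{(\overline p)}} = \bar\alpha_X/\overline p$ (a standard fact about convexification) translates the hypotheses $\underline\alpha_X > \overline p/D$ and $\bar\alpha_X < \overline p/D$ into $\underline\alpha_{\bar X^{(\overline p)}} > 1/D$ and $\bar\alpha_{\bar X^{(\overline p)}} < 1/D$ respectively. Part~i) of Theorem~\ref{teoind} then gives $\|t^{-1/D}f_\mu^{\ast\ast}(t)\|_{\bar X^{(\overline p)}} \simeq \|t^{-1/D}O_\mu(f,t)\|_{\bar X^{(\overline p)}}$, and \eqref{emb1} follows by combining with \eqref{Xq}. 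For \eqref{emb2}, part~ii) of Theorem~\ref{teoind} gives $\|f\|_{L^\infty} \preceq \|t^{-1/D}O_\mu(f,t)\|_{\bar X^{(\overline p)}} + \|f\|_{L^1_\mu + L^\infty}$, and again \eqref{Xq} finishes the argument; one should note that $\bar X^{(\overline p)}$ does not contain constant functions since $\bar X$ does not (any r.i.\ space on $(0,\infty)$ with Lebesgue measure fails to contain nonzero constants).

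The main obstacle I anticipate is bookkeeping the convexification identities correctly---in particular verifying that $\sum_i \theta_i = 1$ is exactly the content of the weighted-harmonic-mean definition \eqref{harm}, and that the norm manipulations $\|\,|h|^{1/\overline p}\,\|_{\bar X^{(\overline p)}}$ etc.\ are consistent when one applies the Hardy--Littlewood--P\'olya principle through a power. A secondary technical point is justifying the Boyd-index transformation under convexification and confirming that Theorem~\ref{teoind} indeed applies with $D$ unchanged (it does, because the weight $t^{-1/D}$ and the operator $\bar Q$ are the same; only the ambient r.i.\ space changes from $X$ to $\bar X^{(\overline p)}$). None of these steps is deep, but they require care to state precisely.
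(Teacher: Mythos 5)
Your proposal follows essentially the same route as the paper's proof: apply Theorem \ref{main th} iv) with $p=\bar p$, pass to the $\bar X^{(\bar p)}$ norm via (\ref{Hardy}), use (\ref{holder X}) with exponents summing to one thanks to (\ref{harm}), conclude with Lemma \ref{rakoto}, and obtain i)--ii) from Theorem \ref{teoind} together with the Boyd-index formula for convexifications. The only blemish is a typo in your weights: they should read $\theta_i=\bar p(A_i+1)/(p_i D)$ rather than $(A_i+1)p_i/(\bar p D)$, which is what your own identity $\prod_i g_i^{\bar p(A_i+1)/D}=\prod_i (g_i^{p_i})^{\theta_i}$ forces and what makes $\sum_i\theta_i=1$ follow from (\ref{harm}).
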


\begin{proof}
Since $\bar{X}^{(\overline{p})}$ is a r.i space, by Theorem
\ref{main th} part $iv)$ we get
\begin{align*}
\left\| \left( f_{\mu }^{\ast \ast }(s)-f_{\mu }^{\ast }(s)\right) s^{-\frac{%
1}{D}}\right\| _{\bar{X}^{(\overline{p})}}& \preceq \left\| \prod_{i=1}^{n}%
\left[ (\tilde{f}_{x_{i}})_{\mu }^{\ast }\right] ^{\frac{A_{i}+1}{D}%
}\right\| _{\bar{X}^{(\overline{p})}} \\
& =\left\| \prod_{i=1}^{n}\left[ (\tilde{f}_{x_{i}})_{\mu }^{\ast }\right] ^{%
\frac{p_{i}}{\bar{p}}\left(
\frac{\bar{p}}{p_{i}}\frac{A_{i}+1}{D}\right)
}\right\| _{\bar{X}^{(\overline{p})}} \\
& \leq \prod_{i=1}^{n}\left\| (\tilde{f}_{x_{i}})_{\mu }^{\ast
}(t)\right\|
_{\bar{X}^{(p_{i})}}^{\frac{A_{i}+1}{D}}\text{ (by (\ref{holder X}))} \\
& \leq \prod_{i=1}^{n}\left\| f_{x_{i}}\right\| _{X^{(p_{i})}}^{\frac{A_{i}+1%
}{D}}\text{ \ (by Lemma \ref{rakoto}).}
\end{align*}

\noindent On the other hand, since (see \cite[Proposition 5.13,
Chapter 3 ] {BS})
\begin{equation*}
\underline{\alpha }_{\bar{X}^{(\overline{p})}}=\frac{\underline{\alpha }_{X}%
}{\bar{p}}\text{ and }\bar{\alpha}_{\bar{X}^{(\overline{p})}}=\frac{\bar{%
\alpha}_{X}}{\bar{p}}
\end{equation*}
(\ref{emb1}) and (\ref{emb2}) follows from Theorem \ref{teoind}.
\end{proof}

\begin{remark}
We stress in the previous proof if we start from \eqref{77} instead of %
\eqref{O} we can not consider the case when $p_1=\cdots=p_n=1$.
\end{remark}

\medskip In the particular case $X=L^1_\mu$, the previous Theorem can be
detailed as in the following proposition.

\begin{proposition}
\label{prop L1} Let $p_{1},\cdots ,p_{n}\geq 1$ and $f\in {W_{0}^{1,1}(%
\mathbb{R}^{n},\mu)}$.

\noindent i) If {\ $\overline{p}<D$}, then
\begin{equation}
\Vert f\Vert _{L^{{\overline{p}}^{\ast },\overline{p}}_\mu
}\preceq\,\prod_{i=1}^{n}\left\| f_{x_{i}}\right\| _{L^{p_{i}}_\mu }^{\frac{%
A_{i}+1}{D}},  \label{p1}
\end{equation}
where $\overline{p}$ is defined as in (\ref{harm}) and ${\overline{p}}^{\ast
}=\frac{\overline{p}D}{D-\overline{p}}$.

{\ \noindent ii) If $\overline{p}=D$, then
\begin{equation}
\left( \int_{0}^{1}\left( \frac{f_{\mu }^{\ast \ast }(t)}{1+\ln \frac{1}{t}}%
\right) ^{D}\frac{dt}{t}\right) ^{1/D}\preceq\,\prod_{i=1}^{n}\left\|
f_{x_{i}}\right\| _{L^{p_{i}}_\mu}^{\frac{A_{i}+1}{D}}+\left\| f\right\|
_{L^1_\mu+L^{\infty }}.  \label{p2}
\end{equation}
}

\noindent iii) {If} $\overline{p}>D$, then
\begin{equation}
\Vert f\Vert _{L^{\infty }}\preceq\,\prod_{i=1}^{n}\left\| f_{x_{i}}\right\|
_{L^{p_{i}}_\mu }^{\frac{A_{i}+1}{D}}+\left\| f\right\| _{L^1_\mu+L^{\infty
}}.  \label{p3}
\end{equation}
\end{proposition}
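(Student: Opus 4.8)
The plan is to apply Theorem~\ref{th2} with $X=L^1_\mu$ and then identify the resulting function space on the left-hand side in each of the three ranges of $\overline p$. The starting point is the master inequality
\begin{equation*}
\Vert t^{-1/D}[f_{\mu }^{\ast \ast }(t)-f_{\mu }^{\ast }(t)]\Vert _{\overline{X}^{(\overline{p})}}\preceq\prod_{i=1}^{n}\left\| f_{x_{i}}\right\|_{X^{(p_{i})}}^{\frac{A_{i}+1}{D}},
\end{equation*}
valid for any r.i.\ space $X$. When $X=L^1_\mu$ we have $\overline X=L^1(0,\infty)$, its $\overline p$-convexification is $\overline X^{(\overline p)}=L^{\overline p}(0,\infty)$, and the Boyd indices are $\underline\alpha_X=\overline\alpha_X=1$. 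Also $X^{(p_i)}=L^{p_i}_\mu$, so the right-hand side is exactly $\prod_{i=1}^n\Vert f_{x_i}\Vert_{L^{p_i}_\mu}^{(A_i+1)/D}$. Thus in all three cases the work is entirely on the left side.

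For part~(i), when $\overline p<D$ we have $\underline\alpha_X=1>\overline p/D$, so Theorem~\ref{th2}(i) gives $\Vert t^{-1/D}f_\mu^{\ast\ast}(t)\Vert_{L^{\overline p}(0,\infty)}\preceq\prod_{i=1}^n\Vert f_{x_i}\Vert_{L^{p_i}_\mu}^{(A_i+1)/D}$. It then remains to recognize that $\Vert t^{-1/D}f_\mu^{\ast\ast}(t)\Vert_{L^{\overline p}(0,\infty)}$ is an equivalent norm for $L^{\overline p^{\,\ast},\overline p}_\mu$: indeed $\big(\int_0^\infty (t^{1/\overline p^{\,\ast}}f_\mu^{\ast\ast}(t))^{\overline p}\,\frac{dt}{t}\big)^{1/\overline p}=\Vert t^{1/\overline p^{\,\ast}-1/\overline p}f_\mu^{\ast\ast}(t)\Vert_{L^{\overline p}(0,\infty)}$ and $1/\overline p^{\,\ast}-1/\overline p=-1/D$ by the definition $\overline p^{\,\ast}=\overline p D/(D-\overline p)$; since $\overline p>1$ (as each $p_i\ge 1$ and at least the harmonic-mean relation forces $\overline p\ge 1$, with $\overline p=1$ only in the trivial $L^1$ endpoint handled separately) the $f^{\ast\ast}$ version is an equivalent norm on the Lorentz space, giving \eqref{p1}.

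For parts~(ii) and~(iii), when $\overline p=D$ we cannot use Theorem~\ref{th2}(i) or~(ii) directly since $\underline\alpha_X=\overline\alpha_X=1\neq 1/D$; instead I would invoke Lemma~\ref{pesos} localized to $(0,1)$ with $p=D$ and $v(s)=s^{-1}$ (so that $s^{-1/D}O_\mu(f,s)$ has $L^D$-norm controlled, via \eqref{O} or \eqref{Xq}, by the product of the $\Vert f_{x_i}\Vert_{L^{p_i}_\mu}^{(A_i+1)/D}$). A computation of $u(t)$ from Lemma~\ref{pesos} with this $v$ yields $u(t)\simeq t^{-1}(1+\ln\frac1t)^{-D}$ on $(0,1)$, and $\int_0^1 u(s)\,ds<\infty$; then Lemma~\ref{pesos} gives exactly \eqref{p2} once we bound $\int_0^1 f_\mu^\ast(t)\,dt\le\Vert f\Vert_{L^1_\mu+L^\infty}$. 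When $\overline p>D$ we have $\overline\alpha_X=1>\overline p/D$ is false; rather $\overline\alpha_X/\overline p=1/\overline p<1/D$, i.e.\ $\overline\alpha_{\overline X^{(\overline p)}}<1/D$, so Theorem~\ref{teoind}(ii) applied to $\overline X^{(\overline p)}=L^{\overline p}(0,\infty)$ combined with \eqref{Xq} gives \eqref{p3} directly. The main obstacle is the borderline case $\overline p=D$: one must carry out the weight computation in Lemma~\ref{pesos} carefully to see the logarithmic factor emerge and to confirm $\int_0^1 u<\infty$, and one must be attentive that the inequality from Lemma~\ref{pesos} is only over the interval $(0,1)$, so the tail contribution is what produces the $\Vert f\Vert_{L^1_\mu+L^\infty}$ term rather than a clean homogeneous estimate.
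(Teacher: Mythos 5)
Your proposal is correct and follows essentially the same route as the paper: parts (i) and (iii) by specializing Theorem~\ref{th2} to $X=L^1_\mu$ (using $\overline{X}^{(\overline p)}=L^{\overline p}(0,\infty)$ and the Boyd indices), and part (ii) via Lemma~\ref{pesos} with $v(s)=1/s$, $p=D$, yielding $u(t)\simeq t^{-1}(1+\ln\frac1t)^{-D}$. The only superfluous worry is the $\overline p=1$ caveat in (i): you only need the trivial direction $f_\mu^\ast\le f_\mu^{\ast\ast}$ there (and in any case the first Lorentz index is $\overline p^{\,\ast}>1$), so no separate endpoint treatment is required.
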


\begin{proof}
Since $\left( L^1_\mu\right) ^{(\bar{p})}=L^{\bar{p}}_\mu$ and $\underline{%
\alpha }_{L^{\bar{p}}}=\bar{\alpha}_{L^{\bar{p}}}=${$\frac{1}{{\bar{p}}}$, (%
\ref{p1}) and (\ref{p3}) follows from Theorem \ref{th2}. {We have
only to
prove } (\ref{p2}).} {If $v(t)=\frac{1}{t}$ in Lemma \ref{pesos}, then $%
u(t)=(D-1)\left( \frac{1}{1+\ln \left( \frac{1}{t}\right) }\right) ^{-D}%
\frac{1}{s}$. Under this choice Lemma \ref{pesos} allows us to get }
\begin{equation*}
\left( \int_{0}^{1}\left( \frac{f_{\mu }^{\ast \ast }(t)}{1+\ln \left( \frac{%
1}{t}\right) }\right) ^{D}\frac{dt}{t}\right) ^{1/D}\preceq \left(
\int_{0}^{1}\left( f_{\mu }^{\ast \ast }(t)-f_{\mu }^{\ast }(t)\right) ^{D}%
\frac{dt}{t}\right) ^{1/D}+f_{\mu }^{\ast \ast }(1).
\end{equation*}
\end{proof}

\begin{remark}
Our result implies Theorem \ref{Theorem:Sobolev W1p}. Indeed it gives an
embedding in Lorentz spaces. If $A_{1}=\cdots =A_{n}=0$ we obtain the same
results of \cite{Tar} and \cite{KP} for what concerns \eqref{p1}.
\end{remark}

\begin{remark}
Let $\Omega \subset \mathbb{R}^{n}$ be a bounded domain and $f\in
C_{c}^{1}\left( \Omega \right) $. Arguing as in the proof of Proposition \ref
{prop L1}, we get
\begin{equation*}
\left( \int_{0}^{\mu \left( \Omega \right) }\left( \frac{f_{\mu }^{\ast \ast
}(t)}{1+\ln \frac{\mu \left( \Omega \right) }{t}}\right) ^{D}\frac{dt}{t}%
\right) ^{1/D}\preceq \,\prod_{i=1}^{n}\left\| f_{x_{i}}\right\| _{L_{\mu
}^{p_{i}}}^{\frac{A_{i}+1}{D}}+\frac{1}{\mu \left( \Omega \right) }%
\int_{\Omega }\left| f\right| d\mu .
\end{equation*}
Since
\begin{equation*}
\sup_{0<t<\mu \left( \Omega \right) }\frac{f_{\mu }^{\ast \ast }(t)}{\left(
1+\ln \frac{\mu \left( \Omega \right) }{t}\right) ^{\frac{D-1}{D}}}\preceq
\left( \int_{0}^{\mu \left( \Omega \right) }\left( \frac{f_{\mu }^{\ast \ast
}(t)}{1+\ln \frac{\mu \left( \Omega \right) }{t}}\right) ^{D}\frac{dt}{t}%
\right) ^{1/D},
\end{equation*}
we obtain the following anisotropic Trudinger inequality
\begin{equation*}
\sup_{0<t<\mu \left( \Omega \right) }\frac{f_{\mu }^{\ast \ast }(t)}{\left(
1+\ln \frac{\mu \left( \Omega \right) }{t}\right) ^{\frac{D-1}{D}}}\preceq
\,\prod_{i=1}^{n}\left\| f_{x_{i}}\right\| _{L_{\mu }^{p_{i}}}^{\frac{A_{i}+1%
}{D}}+\frac{1}{\mu \left( \Omega \right) }\int_{\Omega }\left| f\right| d\mu
.
\end{equation*}
\end{remark}

\subsubsection{The $X^{\langle q\rangle}$ convexification}

\begin{theorem}
Let $X$ be a r.i. space on $(\mathbb{R}^{n},\mu )$ and $f\in {W_{0}^{1,1}(%
\mathbb{R}^{n},\mu )}$. If $p_{1},\cdots ,p_{n}\geq 1$, then
\begin{equation}
\Vert t^{-1/D}[f_{\mu }^{\ast \ast }(t)-f_{\mu }^{\ast }(t)]\Vert _{\bar{X}%
^{\left\langle \overline{p}\right\rangle }}\preceq \prod_{i=1}^{n}\left\|
f_{x_{i}}\right\| _{X^{\left\langle p_{i}\right\rangle }}^{\frac{A_{i}+1}{D}%
},  \notag
\end{equation}
where $\overline{p}$ and $D$ are defined in \eqref{harm} and in
\eqref{D}, respectively and the involved norms are defined as in
\eqref{convessifi1}. Moreover

\noindent i) if $\underline{\alpha }_{\bar{X}^{\left\langle \overline{p}%
\right\rangle }}>\frac{1}{D}$, then
\begin{equation*}
\Vert t^{-1/D}f_{\mu }^{\ast \ast }(t)\Vert _{\bar{X}^{\left\langle
\overline{p}\right\rangle }}\preceq \,\prod_{i=1}^{n}\left\|
f_{x_{i}}\right\| _{X^{\left\langle p_{i}\right\rangle }}^{\frac{A_{i}+1}{D}%
};
\end{equation*}

\noindent ii) if $\bar{\alpha}_{\bar{X}^{\left\langle \overline{p}%
\right\rangle }}<\frac{1}{D}$, then
\begin{equation*}
\Vert f\Vert _{L^{\infty }}\preceq \,\prod_{i=1}^{n}\left\|
f_{x_{i}}\right\| _{X^{\left\langle p_{i}\right\rangle }}^{\frac{A_{i}+1}{D}%
}+\left\| f\right\| _{L_{\mu }^{1}+L^{\infty }}.
\end{equation*}
\end{theorem}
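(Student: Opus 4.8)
The plan is to mimic the proof of Theorem~\ref{th2}, the only change being that the H\"older inequality in the convexified space $\bar{X}^{(\bar{p})}$ is replaced by a two–stage argument: a pointwise submultiplicativity of the maximal operator $(\cdot)^{\ast\ast}$ for non-increasing functions, followed by the plain H\"older inequality \eqref{holder X} in $\bar{X}$. Throughout, $\bar{X}^{\langle\bar{p}\rangle}$ and the $X^{\langle p_i\rangle}$ are r.i. spaces, as recalled after \eqref{convessifi1}.

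First I would apply Theorem~\ref{main th}(iv) with $p=\bar{p}$, which is legitimate since $p_i\ge 1$ forces $\bar{p}\ge 1$. Setting $h(s):=\bigl(s^{-1/D}O_\mu(f,s)\bigr)^{\bar{p}}$ and $G(s):=\prod_{i=1}^n\bigl((\tilde{f}_{x_i})_{\mu}^{*}(s)\bigr)^{\bar{p}(A_i+1)/D}$, that inequality reads $\int_0^t h^{\ast}(s)\,ds\preceq\int_0^t G(s)\,ds$ for all $t>0$. Since $G$ is a product of decreasing rearrangements raised to positive powers it is itself non-increasing, so $\int_0^t G=t\,G^{\ast\ast}(t)$, and dividing by $t$ gives $h^{\ast\ast}(t)\preceq G^{\ast\ast}(t)$ for all $t>0$. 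Raising to the power $1/\bar{p}$ and using the monotonicity of the norm of $\bar{X}$ gives $\bigl\|(h^{\ast\ast})^{1/\bar{p}}\bigr\|_{\bar{X}}\preceq\bigl\|(G^{\ast\ast})^{1/\bar{p}}\bigr\|_{\bar{X}}$. By the definition \eqref{convessifi1} of the $\langle\bar{p}\rangle$-convexification, applied to the non-negative function $s\mapsto s^{-1/D}O_\mu(f,s)$ whose $\bar{p}$-th power is $h$, the left-hand side is precisely $\bigl\|t^{-1/D}[f_\mu^{\ast\ast}(t)-f_\mu^{\ast}(t)]\bigr\|_{\bar{X}^{\langle\bar{p}\rangle}}$, so it only remains to estimate $\bigl\|(G^{\ast\ast})^{1/\bar{p}}\bigr\|_{\bar{X}}$.

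For this, put $\theta_i:=\frac{\bar{p}(A_i+1)}{D p_i}$, so that $\sum_i\theta_i=1$ by \eqref{harm}, and $F_i:=\bigl((\tilde{f}_{x_i})_{\mu}^{*}\bigr)^{p_i}$, so that $G=\prod_i F_i^{\theta_i}$. The multilinear H\"older inequality (which is \eqref{holder X} for $X=L^1(0,t)$) applied to the $F_i$ gives $\int_0^t\prod_i F_i^{\theta_i}\le\prod_i\bigl(\int_0^t F_i\bigr)^{\theta_i}$; dividing by $t$, using $\sum_i\theta_i=1$ and that each $F_i$ is non-increasing, one gets the pointwise bound $G^{\ast\ast}(t)\le\prod_i\bigl(F_i^{\ast\ast}(t)\bigr)^{\theta_i}$. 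Taking $1/\bar{p}$-th powers and using $\theta_i/\bar{p}=\frac{A_i+1}{D p_i}$ yields $\bigl(G^{\ast\ast}(t)\bigr)^{1/\bar{p}}\le\prod_i\bigl((F_i^{\ast\ast}(t))^{1/p_i}\bigr)^{(A_i+1)/D}$. Now apply \eqref{holder X} in $\bar{X}$, legitimate since $\sum_i\frac{A_i+1}{D}=1$, to get $\bigl\|(G^{\ast\ast})^{1/\bar{p}}\bigr\|_{\bar{X}}\le\prod_i\bigl\|(F_i^{\ast\ast})^{1/p_i}\bigr\|_{\bar{X}}^{(A_i+1)/D}$. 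Since $\bigl\|(F_i^{\ast\ast})^{1/p_i}\bigr\|_{\bar{X}}=\bigl\|(\tilde{f}_{x_i})_{\mu}^{*}\bigr\|_{\bar{X}^{\langle p_i\rangle}}$ by the definition of the $\langle p_i\rangle$-convexification, Lemma~\ref{rakoto} applied with the r.i. space $X^{\langle p_i\rangle}$ gives $\bigl\|(\tilde{f}_{x_i})_{\mu}^{*}\bigr\|_{\bar{X}^{\langle p_i\rangle}}\le\|f_{x_i}\|_{X^{\langle p_i\rangle}}$, and chaining these inequalities proves the main estimate. Parts (i) and (ii) then follow exactly as in Theorem~\ref{th2}, applying Theorem~\ref{teoind} to the r.i. space $\bar{X}^{\langle\bar{p}\rangle}$: when $\underline{\alpha}_{\bar{X}^{\langle\bar{p}\rangle}}>\frac{1}{D}$, part (i) of that theorem replaces $\|t^{-1/D}f_\mu^{\ast\ast}\|_{\bar{X}^{\langle\bar{p}\rangle}}$ by $\|t^{-1/D}[f_\mu^{\ast\ast}-f_\mu^{\ast}]\|_{\bar{X}^{\langle\bar{p}\rangle}}$ up to constants, and when $\bar{\alpha}_{\bar{X}^{\langle\bar{p}\rangle}}<\frac{1}{D}$, part (ii) bounds $\|f\|_{L^\infty}$ by $\|t^{-1/D}[f_\mu^{\ast\ast}-f_\mu^{\ast}]\|_{\bar{X}^{\langle\bar{p}\rangle}}+\|f\|_{L_\mu^1+L^\infty}$; in both cases one inserts the main estimate.

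The step needing the most care is the commutation of $(\cdot)^{\ast\ast}$ with the product $\prod_i F_i^{\theta_i}$: the maximal operator cannot be pulled through a product in general, but here every function in sight is already non-increasing (being a decreasing rearrangement raised to a positive power), so $(\cdot)^{\ast\ast}$ is just the average over $(0,t)$ and the step collapses to the multilinear H\"older inequality on an interval. The other bookkeeping point is to recognise that the left-hand side $\int_0^t\bigl(O_\mu(f,\cdot)^{\bar{p}}(\cdot)^{-\bar{p}/D}\bigr)^{\ast}(s)\,ds$ of Theorem~\ref{main th}(iv) equals $t$ times $h^{\ast\ast}(t)$ for $h=(s^{-1/D}O_\mu(f,s))^{\bar{p}}$, so that after taking the $1/\bar{p}$-th power it is literally the $\bar{X}^{\langle\bar{p}\rangle}$-norm appearing in the statement.
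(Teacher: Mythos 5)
Your proposal is correct and follows essentially the same route as the paper: apply Theorem~\ref{main th}(iv) with $p=\bar{p}$, note that the right-hand side is the average of a non-increasing product, use the multilinear H\"older inequality on $(0,t)$ to distribute the average over the factors, then H\"older in $\bar{X}$ and Lemma~\ref{rakoto}. The paper's proof is literally this argument written with explicit averages $\frac{1}{t}\int_0^t$ in place of your $(\cdot)^{\ast\ast}$ notation.
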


\begin{proof}
By Theorem \ref{main th} part $iv)$ with $p=\bar{p},$ we get
\begin{eqnarray*}
\frac{1}{t}\int_{0}^{t}\left( O_{\mu }(f,\cdot )^{\bar{p}}\left(
\cdot
\right) ^{-\frac{\bar{p}}{D}}\,\right) ^{\ast }(s)ds &\preceq &\frac{1}{t}%
\int_{0}^{t}\prod_{i=1}^{n}\left[ (\tilde{f}_{x_{i}})_{\mu }^{\ast }(\tau )%
\right] ^{\bar{p}\left( \frac{A_{i}+1}{D}\right) }d\tau  \\
&=&\frac{1}{t}\int_{0}^{t}\prod_{i=1}^{n}\left[ \left( (\tilde{f}%
_{x_{i}})_{\mu }^{\ast }(\tau )\right) ^{p_{i}}\right] ^{\frac{\bar{p}}{p_{i}%
}\left( \frac{A_{i}+1}{D}\right) }d\tau  \\
&\leq &\prod_{i=1}^{n}\left( \frac{1}{t}\int_{0}^{t}\left[ \left( (\tilde{f}%
_{x_{i}})_{\mu }^{\ast }(\tau )\right) ^{p_{i}}\right] d\tau \right) ^{\frac{%
\bar{p}}{p_{i}}\left( \frac{A_{i}+1}{D}\right) }
\end{eqnarray*}
Let $X$ a r.i. space, then
\begin{align*}
\Vert t^{-1/D}[f_{\mu }^{\ast \ast }(t)-f_{\mu }^{\ast }(t)]\Vert _{\bar{X}%
^{\left\langle \overline{p}\right\rangle }}& =\left\| \left( \frac{1}{t}%
\int_{0}^{t}\left( O_{\mu }(f,\cdot )^{\bar{p}}\left( \cdot \right) ^{-\frac{%
\bar{p}}{D}}\,\right) ^{\ast }(s)ds\right) ^{1/\bar{p}}\right\| _{\bar{X}} \\
& \preceq \left\| \prod_{i=1}^{n}\left(
\frac{1}{t}\int_{0}^{t}\left[ \left( (\tilde{f}_{x_{i}})_{\mu
}^{\ast }(\tau )\right) ^{p_{i}}\right] d\tau \right)
^{\frac{1}{p_{i}}\left( \frac{A_{i}+1}{D}\right) }\right\|
_{\bar{X}}
\\
& \leq \prod_{i=1}^{n}\left\| \left( \frac{1}{t}\int_{0}^{t}\left[ \left( (%
\tilde{f}_{x_{i}})_{\mu }^{\ast }(\tau )\right) ^{p_{i}}\right]
d\tau
\right) ^{\frac{1}{p_{i}}}\right\| _{\bar{X}}^{\left( \frac{A_{i}+1}{D}%
\right) } \\
& =\prod_{i=1}^{n}\left\| (\tilde{f}_{x_{i}})_{\mu }^{\ast }(\tau
)\right\|
_{\bar{X}^{\left\langle p_{i}\right\rangle }}^{\left( \frac{A_{i}+1}{D}%
\right) } \\
& \leq \prod_{i=1}^{n}\left\| f_{x_{i}}\right\|
_{\bar{X}^{\left\langle p_{i}\right\rangle
}}^{\frac{A_{i}+1}{D}}\text{ \ (by Lemma \ref{rakoto}).}
\end{align*}

The statements (\ref{emb1}) and (\ref{emb2}) follows from Theorem
\ref{teoind}.
\end{proof}

\subsection{Generalized Lorentz spaces}

{In this section we state some anisotropic inequalities for
functions $f$ such that the partial derivatives are in some
generalized Lorentz spaces. More precisely as a consequence of
Theorem \ref{th2} we obtain the following theorem.}

\begin{theorem}
\label{aqqq} Let $w\in B_{\min (p_{1},\cdots ,p_{n})}$, $f\in {W_{0}^{1,1}(%
\mathbb{R}^{n},\mu)}$, $p_{1},\cdots ,p_{n}\geq 1$, $q_{1},\cdots
,q_{n}\geq 1$ and the involved norms are defined as in
\eqref{lornorm}.

\noindent i) If $\underline{\alpha }_{\Lambda ^{{\overline{p}},\overline{q}%
}(w)}>\frac{1}{D}$, then
\begin{equation*}
\Vert f\Vert _{\Lambda _{\mu }^{{\overline{p}}^{\ast },\overline{q}%
}(w)}\preceq\,\prod_{i=1}^{n}\left\| f_{x_{i}}\right\| _{\Lambda _{\mu
}^{p_{i},q_{i}}(w)}^{\frac{A_{i}+1}{D}},
\end{equation*}
where $\bar{p}$ and $\bar{q}$ are defined as in (\ref{harm}) and ${\overline{%
p}}^{\ast }=\frac{\overline{p}D}{D-\overline{p}}$.

\noindent ii) If $\bar{\alpha}_{\Lambda ^{{\overline{p}},\overline{q}}(w)}>%
\frac{1}{D}$, then
\begin{equation*}
\Vert f\Vert _{L^{\infty }}\preceq\,\prod_{i=1}^{n}\left\| f_{x_{i}}\right\|
_{\Lambda _{\mu }^{p_{i},q_{i}}(w)}^{\frac{A_{i}+1}{D}}+\left\| f\right\|
_{L^1_\mu+L^{\infty }}.
\end{equation*}
\noindent iii) In the remaining cases we get
\begin{equation*}
\left( \int_{0}^{1}f^{\ast \ast }(s)^{\bar{q}}u(s)ds\right) ^{1/\bar{q}%
}\preceq\,\prod_{i=1}^{n}\left\| f_{x_{i}}\right\| _{\Lambda _{\mu
}^{p_{i},q_{i}}(w)}^{\frac{A_{i}+1}{D}}+\left\| f\right\|
_{L^1_\mu+L^{\infty }},
\end{equation*}
where $u(t)=\frac{\partial }{\partial t}\left( 1+\int_{t}^{1}\frac{\left( s^{%
\frac{\bar{q}}{\bar{p}}-\frac{\bar{q}}{D}-1}w(s)\right) ^{\frac{-1}{\bar{q}-1%
}}}{s^{\frac{\bar{q}}{\bar{q}-1}}}ds\right) ^{1-\bar{q}}$.
\end{theorem}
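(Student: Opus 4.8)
The plan is to derive Theorem \ref{aqqq} as a corollary of Theorem \ref{th2}, exploiting the identity $\Lambda^{p,q}(w) = \Lambda^q(W^{q/p-1}w)$ recalled in the preliminaries, which realizes every generalized Lorentz space as a $\bar q$-convexification of the $1$-parameter space $\Lambda^1(W^{q/p-1}w)$. Concretely, setting $v_i := W^{q_i/p_i-1}w$ one has $\Lambda_\mu^{p_i,q_i}(w) = \Lambda_\mu^{q_i}(v_i) = \big(\Lambda_\mu^1(v_i)\big)^{(q_i)}$, provided the underlying $1$-parameter Lorentz space is genuinely r.i.\ — which is guaranteed by the hypothesis $w\in B_{\min(p_1,\dots,p_n)}$ together with the implication $w\in B_p \Rightarrow W^{q/p-1}w\in B_q$ quoted from \cite{Ne}. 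The first step is therefore to record these algebraic identifications carefully, check the $B$-class bookkeeping (so that $v_i\in B_{q_i}$ for each $i$ and the ``mixed'' weight associated to $\bar p,\bar q$ also lies in the right class), and verify that the weighted harmonic mean of the $q_i$'s produced by the convexification machinery is exactly $\bar q$ as defined in \eqref{harm}.

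Once the spaces are expressed as convexifications, the second step is a direct application of Theorem \ref{th2} with the choice $X = \Lambda^1_\mu(v)$ for the appropriate base weight $v$, $p_i \rightsquigarrow q_i$, so that $X^{(p_i)}$ in that theorem becomes $\Lambda^{p_i,q_i}_\mu(w)$ and $\bar X^{(\overline p)}$ becomes (the representation space of) $\Lambda^{\overline p,\overline q}_\mu(w)$. Estimate \eqref{Xq} then yields
\begin{equation*}
\big\| t^{-1/D}[f_\mu^{\ast\ast}(t)-f_\mu^\ast(t)]\big\|_{\overline{\Lambda^{\overline p,\overline q}(w)}} \preceq \prod_{i=1}^n \|f_{x_i}\|_{\Lambda_\mu^{p_i,q_i}(w)}^{\frac{A_i+1}{D}}.
\end{equation*}
The third step is to convert this oscillation bound into the three stated conclusions by reading off the Boyd indices of $\Lambda^{\overline p,\overline q}(w)$. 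In case (i), $\underline\alpha > 1/D$ lets us invoke Theorem \ref{teoind}(i) to replace $f^{\ast\ast}-f^\ast$ by $f^{\ast\ast}$ on the left, and then the identification of $\Lambda^{\overline p,\overline q}(w)$ scaled by $t^{-1/D}$ with the target space $\Lambda^{{\overline p}^\ast,\overline q}(w)$ — a dilation/Hardy computation on the weight — gives $\|f\|_{\Lambda_\mu^{{\overline p}^\ast,\overline q}(w)}$. In case (ii), $\bar\alpha < 1/D$ triggers Theorem \ref{teoind}(ii), producing the $L^\infty$ bound up to the harmless $L^1_\mu + L^\infty$ term. In the remaining (critical) case, neither index condition holds, and I would follow the route used for \eqref{p2} in Proposition \ref{prop L1}: apply Lemma \ref{pesos} with $v(t)$ equal to the weight defining $\|\cdot\|_{\Lambda^{\overline p,\overline q}(w)}$ after the $t^{-1/D}$ rescaling, namely $v(s) \simeq s^{\bar q/\bar p - \bar q/D - 1} w(s)$, which produces exactly the stated $u(t)$ and the stated Gamma-type inequality.

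**The main obstacle** I expect is the bookkeeping in the critical case and, more generally, making the dilation identity ``$t^{-1/D}\cdot$'' precise at the level of weights: one must show that multiplying by $t^{-1/D}$ inside a $\Lambda^{\overline p,\overline q}(w)$-norm is, up to equivalence, the same as passing to $\Lambda^{{\overline p}^\ast,\overline q}(w)$, which requires $\overline p < D$ and a $B$-class argument to absorb the weight shift (the condition $\underline\alpha_{\Lambda^{\overline p,\overline q}(w)} > 1/D$ is precisely what encodes $\overline p < D$ in the presence of a general $w$). A secondary nuisance is checking that the convexification base space $\Lambda^1_\mu(v)$ does not contain constant functions, as Theorem \ref{teoind} requires; this follows from admissibility of $v$ but should be stated. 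The rest is routine: the Hölder inequality \eqref{holder X} for the product, Lemma \ref{rakoto} to pass from $(\tilde f_{x_i})_\mu^\ast$ to $\|f_{x_i}\|_{\Lambda_\mu^{p_i,q_i}(w)}$, and the verification that the exponents $\frac{A_i+1}{D}$ sum correctly under the weighted harmonic mean \eqref{harm}.
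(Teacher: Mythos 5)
Your overall architecture (multiplicative oscillation estimate in $\Lambda^{\overline{p},\overline{q}}(w)$, then the Boyd-index trichotomy via Theorem \ref{teoind}, then Lemma \ref{pesos} with $v(s)=s^{\bar q/\bar p-\bar q/D-1}w(s)$ in the critical case) coincides with the paper's, and your treatment of cases (i)--(iii) is essentially correct. The gap is in the first step: the reduction to Theorem \ref{th2} does not go through. Theorem \ref{th2} requires a \emph{single} r.i.\ space $X$ with $f_{x_i}\in X^{(p_i)}$ for every $i$. Writing $\Lambda^{p_i,q_i}(w)=\Lambda^{q_i}(v_i)=\bigl(\Lambda^{1}(v_i)\bigr)^{(q_i)}$ forces $v_i=W^{q_i/p_i-1}w$, so the base spaces $\Lambda^{1}(v_i)$ genuinely depend on $i$ unless the ratios $q_i/p_i$ are all equal. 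There is no single weight $v$ with $\bigl(\Lambda^1(v)\bigr)^{(q_i)}=\Lambda^{p_i,q_i}(w)$ for all $i$ in general, so the estimate \eqref{Xq} cannot be invoked as stated.

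The paper sidesteps this by working directly with the integral defining the $\Lambda^{\overline{p},\overline{q}}(w)$-norm: starting from Theorem \ref{main th}~(iv) one bounds
\begin{equation*}
\left\| \prod_{i=1}^{n}\bigl(\tilde{f}_{x_{i}}\bigr)_{\mu }^{\ast }(t)^{\frac{A_{i}+1}{D}}\right\| _{\Lambda ^{\overline{p},\overline{q}}(w)}
=\left( \int_{0}^{\infty }\prod_{i=1}^{n}\left[ \left( t^{\frac{1}{p_{i}}-\frac{1}{q_{i}}}\bigl(\tilde{f}_{x_{i}}\bigr)_{\mu }^{\ast }(t)\right) ^{q_{i}}w(t)\right] ^{\frac{\overline{q}\left( A_{i}+1\right) }{q_{i}D}}dt\right) ^{1/\overline{q}},
\end{equation*}
where the power of $t$ and the power of $w$ recombine correctly precisely because $1=\frac{\overline{q}}{D}\sum_{i}\frac{A_{i}+1}{q_{i}}$ and $\frac{1}{\overline{p}}=\frac{1}{D}\sum_{i}\frac{A_{i}+1}{p_{i}}$; H\"older's inequality with exponents $\frac{q_{i}D}{\overline{q}(A_{i}+1)}$ then yields $\prod_{i}\|\tilde f_{x_i}\|_{\Lambda^{p_i,q_i}(w)}^{(A_i+1)/D}$, and Lemma \ref{rakoto} finishes. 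You should replace your convexification step by this direct H\"older argument (or restrict to the case $q_i/p_i$ constant, which is strictly weaker than the theorem). The remaining points you flag — the identity $t^{1/\overline{p}}t^{-1/D}=t^{1/\overline{p}^{\ast}}$ for case (i) and the $B$-class bookkeeping ensuring the spaces are r.i. — are handled as you describe.
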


\begin{proof}
Since $w\in B_{\min (p_{1},\cdots ,p_{n})}$ all the spaces $\Lambda
^{p_{i},q_{i}}(w)$ are r.i spaces and from $\min (p_{1},\cdots ,p_{n})\leq {%
\overline{p}}$ it follows that $\Lambda ^{{\overline{p}},\overline{q}}(w)$
is a r.i. space. We note that
\begin{equation}
1=\frac{\bar{q}}{D}\sum_{i=1}^{n}\frac{A_{i}+1}{q_{i}}.
\label{holddd}
\end{equation}
By Theorem \ref{main th} we obtain
\begin{eqnarray*}
\Vert t^{-1/D}[f_{\mu }^{\ast \ast }(t)-f_{\mu }^{\ast }(t)]\Vert _{\Lambda
^{{\bar{p}},\overline{q}}(w)} &\preceq &\left\| \prod_{i=1}^{n}\tilde{f}%
_{x_{i}}(\tau )^{\frac{A_{i}+1}{D}}\right\| _{\Lambda ^{{\overline{p}},%
\overline{q}}(w)} \\
&=&\left( \int_{0}^{\infty }\left( t^{\frac{1}{{\overline{p}}}-\frac{1}{\bar{%
q}}}\prod_{i=1}^{n}\tilde{f}_{x_{i}}(t)^{\frac{A_{i}+1}{D}}\right) ^{\bar{q}%
}w(t)\,dt\right) ^{\frac{1}{\bar{q}}} \\
&\leq &\prod_{i=1}^{n}\left( \int_{0}^{\infty }\left( t^{\frac{1}{{p}_{i}}-%
\frac{1}{q_{i}}}\tilde{f}_{x_{i}}(t)^{\frac{A_{i}+1}{D}}\right)
^{q_{i}}w(t)\,dt\right) ^{\frac{A_{i}+1}{q_{i}D}}\text{(by (\ref{holddd}) and (%
\ref{holder X}))} \\
&=&\prod_{i=1}^{n}\left\| \tilde{f}_{x_{i}}\right\| _{\Lambda
^{p_{i},q_{i}}(w)}^{\frac{A_{i}+1}{D}} \\
&\leq &\prod_{i=1}^{n}\left\| f_{x_{i}}\right\| _{\Lambda _{\mu
}^{p_{i},q_{i}}(w)}^{\frac{A_{i}+1}{D}}\text{ \ (by Lemma \ref{rakoto}).}
\end{eqnarray*}
Part \textit{i}) follows form \textit{i}) of Theorem \ref{teoind}, because
\begin{eqnarray*}
\Vert t^{-1/D}[f_{\mu }^{\ast \ast }(t)-f_{\mu }^{\ast }(t)]\Vert _{\Lambda
^{{\overline{p}},\overline{q}}(w)} &\simeq &\Vert t^{-1/D}f_{\mu }^{\ast
\ast }(t)\Vert _{\Lambda ^{{\overline{p}},\overline{q}}(w)} \\
&\simeq &\Vert f_{\mu }^{\ast \ast }(t)\Vert _{\Lambda ^{{\overline{p}}%
^{\ast },\overline{q}}(w)} \\
&\simeq &\Vert f\Vert _{\Lambda _{\mu }^{{\overline{p}}^{\ast },\overline{q}%
}(w)}.
\end{eqnarray*}

Part \textit{ii}) is consequence of \textit{i}) of  Theorem \ref{teoind}.

Let us prove now \textit{iii}). If we denote $I:=\Vert t^{-1/D}[f_{\mu }^{\ast
\ast }(t)-f_{\mu }^{\ast }(t)]\Vert _{\Lambda ^{{\overline{p}},\overline{q}%
}(w)}$ we get
\begin{eqnarray*}
I &=&\left( \int_{0}^{\infty }\left( s^{\frac{1}{\bar{p}}-\frac{1}{\bar{q}}%
}\left( t^{-1/D}[f_{\mu }^{\ast \ast }(t)-f_{\mu }^{\ast }(t)]\right) ^{\ast
}(s)\right) ^{\bar{q}}w(s)\,ds\right) ^{1/\bar{q}} \\
&\simeq &\left( \int_{0}^{\infty }\left( s^{\frac{1}{\bar{p}}-\frac{1}{\bar{q%
}}}\frac{1}{s}\int_{0}^{s}\left( t^{-1/D}[f_{\mu }^{\ast \ast }(t)-f_{\mu
}^{\ast }(t)]\right) ^{\ast }(z)\,dz\right) ^{\bar{q}}w(s)\,ds\right) ^{1/\bar{q}%
}\text{ \ \ \ (since }w\in B_{\bar{p}}) \\
&\geq &\left( \int_{0}^{\infty }\left( s^{\frac{1}{\bar{p}}-\frac{1}{\bar{q}}%
}\frac{1}{2s}\int_{0}^{2s}\left( t^{-1/D}[f_{\mu }^{\ast \ast }(t)-f_{\mu
}^{\ast }(t)]\right) ^{\ast }(z)\,dz\right) ^{\bar{q}}w(s)\,ds\right) ^{1/\bar{q}%
} \\
&\geq &\left( \int_{0}^{\infty }\left( s^{\frac{1}{\bar{p}}-\frac{1}{\bar{q}}%
}\frac{1}{2s}\int_{0}^{2s}\left( z^{-1/D}[f_{\mu }^{\ast \ast }(z)-f_{\mu
}^{\ast }(z)]\right) dz\right) ^{\bar{q}}w(s)\,ds\right) ^{1/\bar{q}} \\
&\geq &\left( \int_{0}^{\infty }\left( s^{\frac{1}{\bar{p}}-\frac{1}{\bar{q}}%
}\frac{1}{2s}\int_{s}^{2s}\left( z^{-1/D}[f_{\mu }^{\ast \ast }(z)-f_{\mu
}^{\ast }(z)]\right) dz\right) ^{\bar{q}}w(s)\,ds\right) ^{1/\bar{q}} \\
&\geq &\left( \int_{0}^{\infty }\left( s^{\frac{1}{\bar{p}}-\frac{1}{\bar{q}}%
}\frac{s[f_{\mu }^{\ast \ast }(s)-f_{\mu }^{\ast }(s)]}{2s}%
\int_{s}^{2s}\left( z^{-1/D-1}\right) dz\right) ^{\bar{q}}w(s)\,ds\right) ^{1/%
\bar{q}}\text{ \ \ (by (\ref{crece}))} \\
&\simeq &\left( \int_{0}^{\infty }\left( s^{\frac{1}{\bar{p}}-\frac{1}{\bar{q%
}}-\frac{1}{D}}[f_{\mu }^{\ast \ast }(s)-f_{\mu }^{\ast }(s)]\right) ^{\bar{q%
}}w(s)\,ds\right) ^{1/\bar{q}} \\
&=&\left( \int_{0}^{\infty }[f_{\mu }^{\ast \ast }(s)-f_{\mu }^{\ast }(s)]^{%
\bar{q}}s^{\frac{\bar{q}}{\bar{p}}-\frac{\bar{q}}{D}-1}w(s)\,ds\right) ^{1/%
\bar{q}}.
\end{eqnarray*}
Considering as weights
\begin{equation*}
v(s):=s^{\frac{\bar{q}}{\bar{p}}-\frac{\bar{q}}{D}-1}w(s)\text{ \ and }u(t)=%
\frac{\partial }{\partial t}\left( 1+\int_{t}^{1}\frac{\left( s^{\frac{\bar{q%
}}{\bar{p}}-\frac{\bar{q}}{D}-1}w(s)\right) ^{\frac{-1}{\bar{q}-1}}}{s^{%
\frac{\bar{q}}{\bar{q}-1}}}ds\right) ^{1-\bar{q}},
\end{equation*}
the result follows from Lemma \ref{pesos}.
\end{proof}

{The following corollaries follow from the previous theorem considering $w=1$
and $w(t)=(1+|\ln t|)^\alpha$ with $\alpha\in\mathbb{R}$, respectively, and
recalling that}
\begin{equation*}
\bar{\alpha}_{L^{{p,q}}(\log L)^{\alpha }}=\underline{\alpha }_{L^{{p,q}%
}(\log L)^{\alpha }}=\bar{\alpha}_{L^{{p,q}}}=\underline{\alpha }_{L^{{p,q}%
}}=\frac{1}{p}.
\end{equation*}

\begin{corollary}
\label{C1} Let $f\in C_{c}^{1}(\mathbb{R}^{n})$, $p_{1},\cdots ,p_{n}\geq 1$
and $q_{1},\cdots ,q_{n}\geq 1$.

\noindent i) If $\bar{p}<D$, then
\begin{equation*}
\Vert f\Vert _{L_{\mu }^{{\overline{p}}^{\ast },\overline{q}%
}}\preceq\,\prod_{i=1}^{n}\left\| f_{x_{i}}\right\| _{L_{\mu
}^{p_{i},q_{i}}}^{\frac{A_{i}+1}{D}},
\end{equation*}
where $\bar{p}$ and $\bar{q}$ are defined as in (\ref{harm}) and ${\overline{%
p}}^{\ast }=\frac{\overline{p}D}{D-\overline{p}}$.

\noindent ii) If $\bar{p}>D$, then
\begin{equation*}
\Vert f\Vert _{L^{\infty }(\mathbb{R}^{n})}\preceq\,\prod_{i=1}^{n}\left\|
f_{x_{i}}\right\| _{L_{\mu }^{p_{i},q_{i}}}^{\frac{A_{i}+1}{D}}+\left\|
f\right\| _{L^1_\mu+L^{\infty }}.
\end{equation*}
\noindent iii) If $\bar{p}=D$, then
\begin{equation*}
\left( \int_{0}^{1}\left( \frac{f_{\mu }^{\ast \ast }(s)}{1+\ln \frac{1}{s}}%
\right) ^{\bar{q}}\frac{ds}{s}\right) ^{1/\bar{q}}\preceq\,\prod_{i=1}^{n}%
\left\| f_{x_{i}}\right\| _{L_{\mu }^{p_{i},q_{i}}}^{\frac{A_{i}+1}{D}%
}+\left\| f\right\| _{L^1_\mu+L^{\infty }}.
\end{equation*}
\end{corollary}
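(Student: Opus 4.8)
The plan is to obtain Corollary~\ref{C1} as the special case $w\equiv 1$ of Theorem~\ref{aqqq}. First I would note that $w\equiv 1$ belongs to $B_p$ for every $p>1$, so that the hypothesis $w\in B_{\min_i p_i}$ of Theorem~\ref{aqqq} is met whenever $\min_i p_i>1$ (the fully degenerate configuration $p_1=\dots=p_n=q_1=\dots=q_n=1$ is already contained in Proposition~\ref{prop L1}~i)). By \eqref{lornorm} one has $\Lambda^{p_i,q_i}(1)=L^{p_i,q_i}$, $\Lambda^{\overline p,\overline q}(1)=L^{\overline p,\overline q}$ and $\Lambda^{\overline p^\ast,\overline q}(1)=L^{\overline p^\ast,\overline q}$, with coincident (quasi-)norms, and since $C_c^1(\mathbb R^n)\subset W_0^{1,1}(\mathbb R^n,\mu)$, Theorem~\ref{aqqq} is available for every $f\in C_c^1(\mathbb R^n)$.

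Next I would translate the Boyd index hypotheses of Theorem~\ref{aqqq} into conditions on $\overline p$. Using the recalled identity $\underline\alpha_{L^{p,q}}=\bar\alpha_{L^{p,q}}=1/p$, we get $\underline\alpha_{L^{\overline p,\overline q}}=\bar\alpha_{L^{\overline p,\overline q}}=1/\overline p$; hence $\underline\alpha_{L^{\overline p,\overline q}}>1/D$ is equivalent to $\overline p<D$ and $\bar\alpha_{L^{\overline p,\overline q}}<1/D$ is equivalent to $\overline p>D$. Consequently part~i) of Theorem~\ref{aqqq} yields statement~i) of the corollary, part~ii) yields statement~ii), and the ``remaining case'' of Theorem~\ref{aqqq} is precisely $\overline p=D$, where $\underline\alpha_{L^{\overline p,\overline q}}=\bar\alpha_{L^{\overline p,\overline q}}=1/D$.

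It remains to make part~iii) of Theorem~\ref{aqqq} explicit when $\overline p=D$ and $w\equiv 1$, which I would carry out exactly as in the proof of Proposition~\ref{prop L1}. Since $\overline p=D$ the inner weight in the definition of $u$ reduces to $s^{\overline q/\overline p-\overline q/D-1}w(s)=s^{-1}$, so that $(s^{-1})^{-1/(\overline q-1)}/s^{\overline q/(\overline q-1)}=s^{-1}$ and
\begin{equation*}
u(t)=\frac{\partial}{\partial t}\left(1+\int_t^1\frac{ds}{s}\right)^{1-\overline q}=\frac{\partial}{\partial t}\left(1+\ln\frac1t\right)^{1-\overline q}=\frac{\overline q-1}{t}\left(1+\ln\frac1t\right)^{-\overline q}.
\end{equation*}
Substituting this $u$ into the conclusion of Theorem~\ref{aqqq}~iii) and absorbing the constant $(\overline q-1)^{1/\overline q}$ into $\preceq$ gives statement~iii) of the corollary. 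I do not anticipate a genuine obstacle: the argument is a bookkeeping specialization of Theorem~\ref{aqqq}, and the only points requiring a little care are the elementary derivative computation of $u$ at the limiting exponent and the verification that, away from the degenerate configurations already covered by Proposition~\ref{prop L1}, $w\equiv1$ is an admissible weight ($B_{\overline p}$ with $\overline p>1$, and $\overline q>1$) so that Theorem~\ref{aqqq}, and Lemma~\ref{pesos} in case~iii), genuinely apply.
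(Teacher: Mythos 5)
Your proposal is correct and is exactly the paper's route: the authors dispose of Corollary \ref{C1} with the one-line remark preceding it, namely specialize Theorem \ref{aqqq} to $w\equiv 1$ and use $\underline{\alpha}_{L^{p,q}}=\bar{\alpha}_{L^{p,q}}=1/p$, with the weight $u$ in case iii) computed just as you do (cf.\ the proof of Proposition \ref{prop L1} ii)). You are in fact more careful than the paper about the hypothesis $w\in B_{\min_i p_i}$ when some $p_i=1$; note only that in the mixed case (one $p_i=q_i=1$, others $>1$) the right observation is that $\Lambda^{1,1}(1)=L^1_\mu$ is an r.i.\ space anyway, which is all the $B_p$ condition is used for.
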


\begin{corollary}
Let $f\in C_{c}^{1}(\mathbb{R}^{n})$, $p_{1},\cdots ,p_{n}\geq 1$ , $%
q_{1},\cdots ,q_{n}\geq 1$ and $\alpha \in \mathbb{R}$.

\noindent i) If $\bar{p}<D$, then
\begin{equation*}
\Vert f\Vert _{L_{\mu }^{{\overline{p}}^{\ast },\overline{q}}(\log
L)^{\alpha }}\preceq\,\prod_{i=1}^{n}\left\| f_{x_{i}}\right\| _{L_{\mu
}^{p_{i},q_{i}}(\log L)^{\alpha }}^{\frac{A_{i}+1}{D}}.
\end{equation*}
where $\bar{p}$ and $\bar{q}$ are defined as in (\ref{harm}) and ${\overline{%
p}}^{\ast }=\frac{\overline{p}D}{D-\overline{p}}$.

\noindent ii) If $\bar{p}>D$, then
\begin{equation*}
\Vert f\Vert _{L^{\infty }(\mathbb{R}^{n})}\preceq\,\prod_{i=1}^{n}\left\|
f_{x_{i}}\right\| _{L_{\mu }^{p_{i},q_{i}}(\log L)^{\alpha }}^{\frac{A_{i}+1%
}{D}}+\left\| f\right\| _{L^1\mu+L^{\infty }}.
\end{equation*}
\noindent iii) If $\bar{p}=D$, then
\begin{equation*}
\left( \int_{0}^{1}\left( \frac{f_{\mu }^{\ast \ast }(s)}{1+\ln \frac{1}{s}}%
\right) ^{\bar{q}}\left( 1+\ln \frac{1}{s}\right) ^{\alpha }\frac{ds}{s}%
\right) ^{1/\bar{q}}\preceq\,\prod_{i=1}^{n}\left\| f_{x_{i}}\right\|
_{L_{\mu }^{p_{i},q_{i}}(\log L)^{\alpha }}^{\frac{A_{i}+1}{D}}+\left\|
f\right\| _{L^1_\mu+L^{\infty }}.
\end{equation*}
\end{corollary}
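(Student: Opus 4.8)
The plan is to read this corollary off from Theorem~\ref{aqqq} with the weight $w(t)=(1+|\ln t|)^{\alpha}$. The first step is the identification of the spaces. Inserting $w(t)=(1+|\ln t|)^{\alpha}$ into the defining functional \eqref{lornorm} gives
\[
\|f\|_{\Lambda^{p,q}(w)}=\Bigl(\int_{0}^{\infty}\bigl(t^{1/p-1/q}(1+|\ln t|)^{\alpha/q}f^{\ast}(t)\bigr)^{q}\,dt\Bigr)^{1/q},
\]
which is the Lorentz--Zygmund norm; hence $\Lambda^{p,q}_{\mu}(w)=L^{p,q}_{\mu}(\log L)^{\alpha}$, and likewise with $(\bar p,\bar q)$ in place of $(p,q)$. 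Next I would check the standing hypothesis of Theorem~\ref{aqqq}, namely $w\in B_{\min(p_{1},\dots,p_{n})}$: since Lorentz--Zygmund spaces are only defined for $1<p<\infty$ we have $\min(p_{1},\dots,p_{n})>1$, and a purely logarithmic weight lies in $B_{p}$ for every $p>1$ and every $\alpha\in\mathbb{R}$ (any negative power of $t$ dominates a power of $1+|\ln t|$); by $B_{r}\subset B_{p}$ for $r\le p$ this also gives $w\in B_{\bar p}$, so all the spaces $\Lambda^{p_{i},q_{i}}_{\mu}(w)$ and $\Lambda^{\bar p,\bar q}_{\mu}(w)$ are rearrangement invariant and Theorem~\ref{aqqq} applies.

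With these preliminaries the three parts of the corollary correspond to the three alternatives of Theorem~\ref{aqqq}. Using the Boyd indices recalled just before the corollary, $\underline{\alpha}_{L^{\bar p,\bar q}(\log L)^{\alpha}}=\bar{\alpha}_{L^{\bar p,\bar q}(\log L)^{\alpha}}=1/\bar p$, so the hypothesis $\underline{\alpha}_{\Lambda^{\bar p,\bar q}(w)}>1/D$ of Theorem~\ref{aqqq}(i) is exactly $\bar p<D$, the index hypothesis of Theorem~\ref{aqqq}(ii) is exactly $\bar p>D$, and ``the remaining case'' is $\bar p=D$. Thus parts (i) and (ii) follow verbatim after renaming $\Lambda^{p,q}_{\mu}(w)$ as $L^{p,q}_{\mu}(\log L)^{\alpha}$ (in part (i) one also uses $1/\bar p^{\ast}=1/\bar p-1/D$); nothing more is needed in these two cases.

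The only genuine computation is in part (iii), the borderline case $\bar p=D$. Here I would substitute $\bar p=D$ and $w(s)=(1+|\ln s|)^{\alpha}$ into the weight
\[
u(t)=\frac{\partial}{\partial t}\Bigl(1+\int_{t}^{1}\frac{\bigl(s^{\frac{\bar q}{\bar p}-\frac{\bar q}{D}-1}w(s)\bigr)^{-1/(\bar q-1)}}{s^{\bar q/(\bar q-1)}}\,ds\Bigr)^{1-\bar q}
\]
furnished by Theorem~\ref{aqqq}(iii). Since $\frac{\bar q}{\bar p}-\frac{\bar q}{D}-1=-1$ when $\bar p=D$, the quantity $s^{\frac{\bar q}{\bar p}-\frac{\bar q}{D}-1}w(s)$ becomes $s^{-1}(1+|\ln s|)^{\alpha}$ and the integrand in the bracket collapses to $s^{-1}(1+|\ln s|)^{-\alpha/(\bar q-1)}$; the substitution $r=\ln(1/s)$ turns the inner integral into $\int_{0}^{\ln(1/t)}(1+r)^{-\alpha/(\bar q-1)}\,dr$, and $1$ plus this integral is comparable to a power of $1+\ln(1/t)$ (the exponent being analysed exactly as in the proof of Proposition~\ref{prop L1}(ii), which is the instance $\alpha=0$). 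Differentiating and simplifying then yields $u(t)\simeq t^{-1}(1+\ln(1/t))^{\alpha-\bar q}$ on $(0,1)$, so that $\bigl(\int_{0}^{1}f_{\mu}^{\ast\ast}(s)^{\bar q}u(s)\,ds\bigr)^{1/\bar q}$ is precisely the left-hand side of part (iii); absorbing the remaining term $f_{\mu}^{\ast\ast}(1)$ of Theorem~\ref{aqqq}(iii) into $\|f\|_{L^{1}_{\mu}+L^{\infty}}$ completes the argument.

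I expect the explicit evaluation of $u$ in part (iii) to be the only delicate point: one must keep track of the behaviour of the inner integral $\int_{0}^{\ln(1/t)}(1+r)^{-\alpha/(\bar q-1)}\,dr$ both as $t\to0^{+}$ and as $t\to1^{-}$ in order to obtain a clean two-sided comparison with a single power of $1+\ln(1/t)$ before differentiating, and it is here that the interplay between $\alpha$ and $\bar q$ enters and must be handled with care. Everything else is a transcription of Theorem~\ref{aqqq} into Lorentz--Zygmund notation together with the already-recorded Boyd indices.
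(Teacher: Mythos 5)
Your route is exactly the paper's: the authors dispose of this corollary in one sentence, namely ``apply Theorem \ref{aqqq} with $w(t)=(1+|\ln t|)^{\alpha}$ and use the recorded Boyd indices $\underline{\alpha}_{L^{p,q}(\log L)^{\alpha}}=\bar{\alpha}_{L^{p,q}(\log L)^{\alpha}}=1/p$''. Your identification $\Lambda^{p,q}_{\mu}(w)=L^{p,q}_{\mu}(\log L)^{\alpha}$, the verification that $(1+|\ln t|)^{\alpha}\in B_{p}$ for $p>1$, and the translation of the index conditions into $\bar p<D$, $\bar p>D$, $\bar p=D$ are all correct, so parts (i) and (ii) are complete (modulo the same caveat as in the paper that the Lorentz--Zygmund scale forces $p_{i}>1$, whereas the statement allows $p_{i}\geq1$).

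There is, however, a gap in part (iii), at precisely the point you flag but do not resolve. With $\bar p=D$ the inner integral is $\int_{0}^{\ln(1/t)}(1+r)^{-\alpha/(\bar q-1)}\,dr$, and this is comparable to $(1+\ln(1/t))^{1-\alpha/(\bar q-1)}$ \emph{only when} $\alpha<\bar q-1$; in that regime your conclusion $u(t)\simeq t^{-1}(1+\ln(1/t))^{\alpha-\bar q}$ is correct and yields the stated left-hand side. If $\alpha>\bar q-1$ the integral converges as $t\to0^{+}$, the bracket $1+\int_{t}^{1}(\cdots)$ is bounded above and below by constants, and differentiating gives $u(t)\simeq t^{-1}(1+\ln(1/t))^{-\alpha/(\bar q-1)}$, which is a genuinely different weight (for $\bar q=2$, $\alpha=3$ one gets exponent $-3$ instead of the claimed $+1$); the borderline $\alpha=\bar q-1$ produces an iterated logarithm. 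So the asserted two-sided comparison with a single power of $1+\ln(1/t)$, and hence the stated form of (iii), only follows from this computation for $\alpha<\bar q-1$. To be fair, the paper performs no computation here at all, so this is as much a gap in the source as in your write-up; but a complete proof must either restrict $\alpha$ or carry out the case analysis and adjust the weight in (iii) accordingly.
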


{\noindent When $A_1=\cdots=A_n=0$, \textit{i}) of Corollary
\ref{C1} is contained in \cite{Tar} and \cite{KP}. For our knowledge
the other ones are new. }

\subsection{The Gamma spaces}

\begin{theorem}
Let $w$ be a weight satisfying condition (\ref{gweg}), $f\in {W_{0}^{1,1}(%
\mathbb{R}^{n},\mu )}$, $p_{1},\cdots ,p_{n}\geq 1$ and the involved
norms are defined as in \eqref{gammanorm}.

\noindent i) If $\underline{\alpha }_{\Gamma ^{{\overline{p}}^{\ast },}(w)}>%
\frac{1}{D}$, then
\begin{equation*}
\Vert t^{-1/D}f_{\mu }^{\ast }(t)\Vert _{\Gamma ^{{\overline{p}}^{\ast
}}(w)}\preceq \prod_{i=1}^{n}\left\| f_{x_{i}}\right\| _{\Gamma _{\mu
}^{p_{i}}(w)}^{\frac{A_{i}+1}{D}},
\end{equation*}
where $\bar{p}$ is defined as in (\ref{harm}) and ${\overline{p}}^{\ast }=%
\frac{\overline{p}D}{D-\overline{p}}$.

\noindent ii) If $\bar{\alpha}_{\Gamma ^{{\overline{p}}^{\ast },}(w)}<\frac{1%
}{D}$, then
\begin{equation*}
\Vert f\Vert _{L^{\infty }}\preceq \prod_{i=1}^{n}\left\| f_{x_{i}}\right\|
_{\Gamma _{\mu }^{p_{i}}(w)}^{\frac{A_{i}+1}{D}}+\left\| f\right\| _{L_{\mu
}^{1}+L^{\infty }}.
\end{equation*}
\noindent iii) In the remaining cases we get
\begin{equation*}
\left( \int_{0}^{1}f^{\ast \ast }(s)^{{\overline{p}}^{\ast }}u(s)ds\right)
^{1/{\overline{p}}^{\ast }}\preceq \prod_{i=1}^{n}\left\| f_{x_{i}}\right\|
_{\Gamma _{\mu }^{p_{i}}(w)}^{\frac{A_{i}+1}{D}}+\left\| f\right\| _{L_{\mu
}^{1}+L^{\infty }},
\end{equation*}
where $u(t)=\frac{\partial }{\partial t}\left( 1+\int_{t}^{1}\frac{\left(
s^{-\frac{{\overline{p}}^{\ast }}{D}}w(s) \right) ^{\frac{-1}{{\overline{p}}%
^{\ast }-1}}}{s^{\frac{{\overline{p}}^{\ast }}{{\overline{p}}^{\ast }-1}}}%
ds\right) ^{1-{\overline{p}}^{\ast }}$.
\end{theorem}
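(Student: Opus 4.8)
The plan is to run, in the Gamma–space setting, the same scheme already used for Theorems \ref{th2}, \ref{aqqq} and for the $X^{\langle q\rangle}$–spaces: first produce a multiplicative oscillation inequality measured in the $\Gamma$–norm, and then read off $i)$, $ii)$, $iii)$ from Theorem \ref{teoind} and Lemma \ref{pesos}. So the first task is the core estimate $\Vert t^{-1/D}(f_{\mu}^{\ast\ast}(t)-f_{\mu}^{\ast}(t))\Vert_{\Gamma^{\overline{p}^{\ast}}(w)}\preceq\prod_{i=1}^{n}\Vert f_{x_{i}}\Vert_{\Gamma_{\mu}^{p_{i}}(w)}^{(A_{i}+1)/D}$. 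For this I would start from Theorem \ref{main th} part $iv)$ (with a suitable exponent $p$), whose right–hand integrand is already non–increasing, so that the Hardy--Littlewood--P\'olya principle \eqref{Hardy} upgrades it to an inequality between the $\Gamma^{\overline{p}^{\ast}}(w)$–norm of $t^{-1/D}O_{\mu}(f,t)$ and the $\Gamma^{\overline{p}^{\ast}}(w)$–norm of $\prod_{i}[(\tilde{f}_{x_{i}})_{\mu}^{\ast}]^{(A_{i}+1)/D}$; here one uses that, by \eqref{gweg}, $\Gamma^{\overline{p}^{\ast}}(w)$ is a genuine r.i.\ space.

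The second step is to split that product. Using the generalized H\"older inequality $(\ref{holder X})$ together with the two identities $\sum_{i}(A_{i}+1)/D=1$ (from \eqref{D}) and the harmonic–mean relation \eqref{harm}, which I would rewrite as $\tfrac1{\overline{p}}-\tfrac1D=\tfrac1{\overline{p}^{\ast}}$ — this last identity is exactly what forces the Sobolev exponent $\overline{p}^{\ast}$ onto the left while the derivatives are measured with the $p_{i}$ — I expect to reach $\Vert\prod_{i}[(\tilde{f}_{x_{i}})_{\mu}^{\ast}]^{(A_{i}+1)/D}\Vert_{\Gamma^{\overline{p}^{\ast}}(w)}\le\prod_{i}\Vert(\tilde{f}_{x_{i}})_{\mu}^{\ast}\Vert_{\Gamma^{p_{i}}(w)}^{(A_{i}+1)/D}$, after which Lemma \ref{rakoto} closes the estimate: its hypothesis $\int_{0}^{t}(\tilde{f}_{x_{i}})_{\mu}^{\ast}\le\int_{0}^{t}|f_{x_{i}}|_{\mu}^{\ast}$ gives pointwise $\frac1t\int_{0}^{t}(\tilde{f}_{x_{i}})_{\mu}^{\ast}\le|f_{x_{i}}|_{\mu}^{\ast\ast}(t)$, hence directly $\Vert(\tilde{f}_{x_{i}})_{\mu}^{\ast}\Vert_{\Gamma^{p_{i}}(w)}\le\Vert f_{x_{i}}\Vert_{\Gamma_{\mu}^{p_{i}}(w)}$.

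With the core estimate in hand the three cases follow by specializing Theorem \ref{teoind} to the r.i.\ space $X=\Gamma^{\overline{p}^{\ast}}(w)$, whose Boyd indices are precisely the quantities compared with $1/D$ in the hypotheses. For $i)$, Theorem \ref{teoind}$\,i)$ replaces $f_{\mu}^{\ast\ast}-f_{\mu}^{\ast}$ by $f_{\mu}^{\ast\ast}$, and one may pass to $f_{\mu}^{\ast}$ in the statement since $\int_{0}^{s}\tau^{-1/D}f_{\mu}^{\ast}\le\int_{0}^{s}\tau^{-1/D}f_{\mu}^{\ast\ast}\le D\int_{0}^{s}\tau^{-1/D}f_{\mu}^{\ast}$. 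For $ii)$, Theorem \ref{teoind}$\,ii)$ yields the $L^{\infty}$ estimate with the harmless $\Vert f\Vert_{L^{1}_{\mu}+L^{\infty}}$ term. For $iii)$, I would feed the core estimate, restricted to $(0,1)$, into Lemma \ref{pesos} with $p=\overline{p}^{\ast}$ and $v(s)=s^{-\overline{p}^{\ast}/D}w(s)$: then $\big(\int_{0}^{1}O_{\mu}(f,s)^{\overline{p}^{\ast}}v(s)\,ds\big)^{1/\overline{p}^{\ast}}=\big(\int_{0}^{1}(s^{-1/D}O_{\mu}(f,s))^{\overline{p}^{\ast}}w(s)\,ds\big)^{1/\overline{p}^{\ast}}$ is controlled by the core estimate, the weight produced by Lemma \ref{pesos} is exactly the $u$ in the statement, and the leftover term $\big(\int_{0}^{1}u\big)^{1/\overline{p}^{\ast}}\int_{0}^{1}f_{\mu}^{\ast}$ is absorbed into $\Vert f\Vert_{L^{1}_{\mu}+L^{\infty}}$.

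The delicate point is the weighted bookkeeping in the first two steps: one must carry the averaging operator (the $\ast\ast$ built into the $\Gamma$–norm \eqref{gammanorm}) through the generalized H\"older inequality so that the oscillation lands in $\Gamma^{\overline{p}^{\ast}}(w)$ with exactly the same weight $w$ that measures the derivatives in $\Gamma_{\mu}^{p_{i}}(w)$; this forces the right choice of the exponent $p$ in Theorem \ref{main th}$\,iv)$ and repeated use of both exponent identities above (plus the admissibility of $w$ to keep every $\Gamma$ space r.i.). Once this is arranged, everything else is a routine transcription of the arguments already given in Sections \ref{Main} and \ref{applica}.
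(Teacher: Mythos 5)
Your plan follows the paper's proof almost line by line: the core estimate $\Vert t^{-1/D}O_{\mu}(f,t)\Vert _{\Gamma ^{\overline{p}^{\ast }}(w)}\preceq \prod_{i}\Vert f_{x_{i}}\Vert _{\Gamma _{\mu }^{p_{i}}(w)}^{(A_{i}+1)/D}$ via Theorem \ref{main th}\,iv) and \eqref{Hardy}, H\"older inside and outside the average defining the $\Gamma$-norm, and Lemma \ref{rakoto}; then Theorem \ref{teoind} for i)--ii) and Lemma \ref{pesos} with $v(s)=s^{-\overline{p}^{\ast }/D}w(s)$ for iii). One caution on the "bookkeeping" you flag as delicate: in the $\Gamma$-norm \eqref{gammanorm} there is no factor $t^{1/p}$, so the identity $\tfrac{1}{\overline{p}}-\tfrac{1}{D}=\tfrac{1}{\overline{p}^{\ast }}$ never enters the H\"older step (unlike the Lorentz case of Theorem \ref{aqqq}); what the splitting $\Vert \prod_{i}g_{i}^{\theta _{i}}\Vert _{\Gamma ^{q}(w)}\leq \prod_{i}\Vert g_{i}\Vert _{\Gamma ^{p_{i}}(w)}^{\theta _{i}}$ actually requires is the exponent relation $\sum_{i}\theta _{i}q/p_{i}=1$ with $\theta _{i}=(A_{i}+1)/D$, the analogue of \eqref{holddd}. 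You should verify this relation for the value of $q$ you use rather than appeal to the Sobolev identity.

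The genuine gap is in iii). You assert that $\bigl(\int_{0}^{1}(s^{-1/D}O_{\mu }(f,s))^{\overline{p}^{\ast }}w(s)\,ds\bigr)^{1/\overline{p}^{\ast }}$ "is controlled by the core estimate." It is not, as stated: the core estimate bounds the $\Gamma ^{\overline{p}^{\ast }}(w)$-norm of $g(s)=s^{-1/D}O_{\mu }(f,s)$, i.e. $\bigl(\int_{0}^{\infty }g^{\ast \ast }(s)^{\overline{p}^{\ast }}w(s)\,ds\bigr)^{1/\overline{p}^{\ast }}$, which involves the decreasing rearrangement of $g$, and for a weight $w$ satisfying only \eqref{gweg} there is no general inequality $\int_{0}^{1}g^{p}w\preceq \int_{0}^{\infty }(g^{\ast \ast })^{p}w$ (take $g$ a tall thin spike sitting where $w$ is concentrated). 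The passage works here only because of the special structure of the oscillation, and the paper supplies it by the doubling argument of Theorem \ref{aqqq}\,iii): bound $g^{\ast \ast }(s)\geq \frac{1}{2s}\int_{0}^{2s}g^{\ast }\geq \frac{1}{2s}\int_{s}^{2s}z^{-1/D}O_{\mu }(f,z)\,dz$, and then use that $zO_{\mu }(f,z)$ is increasing (identity \eqref{crece}) to pull out $sO_{\mu }(f,s)$ and recover $s^{-1/D}O_{\mu }(f,s)$ up to a constant. This monotonicity step is the missing idea; once it is inserted, your choice of $v$, the weight $u$ produced by Lemma \ref{pesos}, and the absorption of the remainder term into $\Vert f\Vert _{L_{\mu }^{1}+L^{\infty }}$ all match the paper.
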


\begin{proof}
\begin{eqnarray*}
\Vert t^{-1/D}[f_{\mu }^{\ast \ast }(t)-f_{\mu }^{\ast }(t)]\Vert _{\Gamma ^{%
{\overline{p}}^{\ast }}(w)} &\preceq &\left\| \prod_{i=1}^{n}(\tilde{f}%
_{x_{i}})_{\mu }^{\ast }(t)^{\frac{A_{i}+1}{D}}\right\| _{\Gamma ^{{%
\overline{p}}^{\ast },}(w)} \\
&=&\left( \int_{0}^{\infty }\left( \frac{1}{t}\int_{0}^{t}\prod_{i=1}^{n}(%
\tilde{f}_{x_{i}})_{\mu }^{\ast }(s)^{\frac{A_{i}+1}{D}}ds\right) ^{{%
\overline{p}}^{\ast }}w(t)\,dt\right) ^{\frac{1}{{\overline{p}}^{\ast }}} \\
&\leq &\prod_{i=1}^{n}\left( \int_{0}^{\infty }\left( \frac{1}{t}%
\int_{0}^{t}(\tilde{f}_{x_{i}})_{\mu }^{\ast }(s)ds\right)
^{p_{i}}w(t)\,dt\right) ^{\frac{A_{i}+1}{p_{i}D}}\text{(by
(\ref{holddd})
and (\ref{holder X}))} \\
&=&\prod_{i=1}^{n}\left\| (\tilde{f}_{x_{i}})_{\mu }^{\ast }\right\|
_{\Gamma _{\mu }^{p_{i}}(w)}^{\frac{A_{i}+1}{D}} \\
&\leq &\prod_{i=1}^{n}\left\| f_{x_{i}}\right\| _{\Gamma _{\mu
}^{p_{i}}(w)}^{\frac{A_{i}+1}{D}}\text{ \ (by Lemma \ref{rakoto}).}
\end{eqnarray*}
Part \textit{i}) follows form \textit{i}) of Theorem \ref{teoind},
since
\begin{equation*}
\Vert t^{-1/D}[f_{\mu }^{\ast \ast }(t)-f_{\mu }^{\ast }(t)]\Vert _{\Gamma ^{%
{\overline{p}}^{\ast }}(w)}\simeq \Vert t^{-1/D}f_{\mu }^{\ast \ast
}(t)\Vert _{\Gamma ^{{\overline{p}}^{\ast }}(w)}\simeq \Vert
t^{-1/D}f_{\mu }^{\ast }(t)\Vert _{\Gamma ^{{\overline{p}}^{\ast
}}(w)}\text{.}
\end{equation*}

Part \textit{ii}) is consequence of \textit{i}) of Theorem
\ref{teoind}.

Let us prove now \textit{iii}). If we denote $I:=\Vert
t^{-1/D}[f_{\mu }^{\ast \ast }(t)-f_{\mu }^{\ast }(t)]\Vert _{\Gamma
^{{\overline{p}}^{\ast }}(w)}$ using the same argument that in part
iii) of Theorem \ref{aqqq} we easily obtain
\begin{equation*}
I\geq \left( \int_{0}^{\infty }[f_{\mu }^{\ast \ast }(s)-f_{\mu
}^{\ast
}(s)]^{\bar{q}}s^{-\frac{{\overline{p}}^{\ast }}{D}}w(s)\,ds\right) ^{1/{%
\overline{p}}^{\ast }}.
\end{equation*}
Considering as weights
\begin{equation*}
v(s):=s^{-\frac{{\overline{p}}^{\ast }}{D}}w(s)\text{ \ and }u(t)=\frac{%
\partial }{\partial t}\left( 1+\int_{t}^{1}\frac{\left( s^{-\frac{{\overline{%
p}}^{\ast }}{D}}w(s)\right) ^{\frac{-1}{{\overline{p}}^{\ast }-1}}}{s^{%
\frac{{\overline{p}}^{\ast }}{{\overline{p}}^{\ast }-1}}}ds\right) ^{1-{%
\overline{p}}^{\ast }},
\end{equation*}
the result follows from Lemma \ref{pesos}.
\end{proof}

\subsection{The $G\Gamma(m,p,w)$-spaces}

\begin{theorem}
Let $w$ be a satisfying (\ref{ggweg}), $f\in
{W_{0}^{1,1}(\mathbb{R}^{n},\mu )}$, $p_{1},\cdots ,p_{n}\geq 1$ and
the involved norms are defined as in (\ref{ggammanorm}).

\noindent i) If $\underline{\alpha }_{G\Gamma ({\overline{p}}^{\ast },m,w)}>%
\frac{1}{D}$, then
\begin{equation*}
\Vert t^{-1/D}f_{\mu }^{\ast }(t)\Vert _{G\Gamma ({\overline{p}}^{\ast
},m,w)}\preceq \prod_{i=1}^{n}\left\| f_{x_{i}}\right\| _{G\Gamma ({p}%
_{i},m,w)}^{\frac{A_{i}+1}{D}},
\end{equation*}
where $\bar{p}$ is defined as in (\ref{harm}) and ${\overline{p}}^{\ast }=%
\frac{\overline{p}D}{D-\overline{p}}$.

\noindent ii) If $\bar{\alpha}_{G\Gamma ({\overline{p}}^{\ast },m,w)}<\frac{1%
}{D}$, then
\begin{equation*}
\Vert f\Vert _{L^{\infty }}\preceq \prod_{i=1}^{n}\left\| f_{x_{i}}\right\|
_{G\Gamma ({p}_{i},m,w)}^{\frac{A_{i}+1}{D}}+\left\| f\right\| _{L_{\mu
}^{1}+L^{\infty }}.
\end{equation*}
\noindent iii) In the remaining cases we get
\begin{equation*}
\left( \int_{0}^{1}f^{\ast \ast }(s)^{{m}}u(s)ds\right) ^{1/{m}}\preceq
\prod_{i=1}^{n}\left\| f_{x_{i}}\right\| _{G\Gamma ({p}_{i},m,w)}^{\frac{%
A_{i}+1}{D}}+\left\| f\right\| _{L_{\mu }^{1}+L^{\infty }},
\end{equation*}
where $u(t)=\frac{\partial }{\partial t}\left( 1+\int_{t}^{1}\frac{\left(
s^{-\frac{m}{D}}w(s)\right) ^{\frac{-1}{{m}-1}}}{s^{\frac{{m}}{{m}-1}}}%
ds\right) ^{1-{m}}$.
\end{theorem}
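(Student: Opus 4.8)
The plan is to reproduce, \emph{mutatis mutandis}, the argument used for the Gamma spaces in the previous theorem, with $\Gamma^{\overline{p}^{\ast}}(w)$ replaced everywhere by $G\Gamma(\overline{p}^{\ast},m,w)$. Everything rests on the master estimate
$\|t^{-1/D}[f_\mu^{\ast\ast}(t)-f_\mu^{\ast}(t)]\|_{G\Gamma(\overline{p}^{\ast},m,w)}\preceq\prod_{i=1}^{n}\|f_{x_i}\|_{G\Gamma(p_i,m,w)}^{(A_i+1)/D}$;
granting it, (i) and (ii) follow from Theorem \ref{teoind}, and (iii) from Lemma \ref{pesos}, just as in the Gamma case.

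To prove the master estimate, first note that hypothesis (\ref{ggweg}) makes $G\Gamma(\overline{p}^{\ast},m,w)$ and every $G\Gamma(p_i,m,w)$ into r.i. spaces. By Theorem \ref{main th}(iv) with $p=1$ and the Hardy--Littlewood--P\'olya principle (\ref{Hardy}), $\|t^{-1/D}[f_\mu^{\ast\ast}-f_\mu^{\ast}]\|_{G\Gamma(\overline{p}^{\ast},m,w)}\preceq\bigl\|\prod_{i}[(\tilde f_{x_i})_\mu^{\ast}]^{(A_i+1)/D}\bigr\|_{G\Gamma(\overline{p}^{\ast},m,w)}$, and since the product $\prod_{i}[(\tilde f_{x_i})_\mu^{\ast}]^{(A_i+1)/D}$ is non-increasing the right-hand side equals $\bigl(\int_0^{\infty}\bigl(\int_0^{t}\prod_{i}[(\tilde f_{x_i})_\mu^{\ast}(s)]^{\overline{p}^{\ast}(A_i+1)/D}\,ds\bigr)^{m/\overline{p}^{\ast}}w(t)\,dt\bigr)^{1/m}$. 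Now comes the two-step H\"older reduction, exactly as in the Gamma-space proof: the generalized H\"older inequality for the probability measure $ds/t$ on $(0,t)$ --- whose exponents $(A_i+1)/D$ sum to $1$ --- applied to the inner average, followed by the generalized H\"older inequality (\ref{holder X}) together with the identity (\ref{harm}) applied to the outer integral in $t$, bounds the displayed quantity by $\prod_i\|(\tilde f_{x_i})_\mu^{\ast}\|_{G\Gamma(p_i,m,w)}^{(A_i+1)/D}$; Lemma \ref{rakoto} then replaces each $(\tilde f_{x_i})_\mu^{\ast}$ by $f_{x_i}$.

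Granting the master estimate: for (i) one uses Theorem \ref{teoind}(i) (legitimate since $\underline{\alpha}_{G\Gamma(\overline{p}^{\ast},m,w)}>1/D$) to pass from the oscillation norm to $\|t^{-1/D}f_\mu^{\ast\ast}(t)\|_{G\Gamma(\overline{p}^{\ast},m,w)}$, which in turn is equivalent to $\|t^{-1/D}f_\mu^{\ast}(t)\|_{G\Gamma(\overline{p}^{\ast},m,w)}$ because the averaging operator is bounded on $G\Gamma(\overline{p}^{\ast},m,w)$ --- the same computation as for $\Gamma^{\overline{p}^{\ast}}(w)$ in the preceding proof; statement (ii) is immediate from Theorem \ref{teoind}(ii). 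For (iii), one first establishes --- by the chain of inequalities used in part (iii) of Theorem \ref{aqqq} (peel off the outer integration, use $\int_0^{t}(g^{\ast})^{\overline{p}^{\ast}}\ge\int_0^{t}g^{\overline{p}^{\ast}}$, restrict to $[s,2s]$, and invoke that $s\mapsto sO_\mu(f,s)$ is non-decreasing by (\ref{crece})) --- the lower bound $\|t^{-1/D}[f_\mu^{\ast\ast}-f_\mu^{\ast}]\|_{G\Gamma(\overline{p}^{\ast},m,w)}\gtrsim\bigl(\int_0^{1}O_\mu(f,s)^{m}v(s)\,ds\bigr)^{1/m}$ with $v(s)=s^{-m/D}w(s)$; together with the master estimate this controls $\bigl(\int_0^{1}O_\mu(f,s)^{m}v(s)\,ds\bigr)^{1/m}$ by the asserted right-hand side, and Lemma \ref{pesos} applied with this $v$ and exponent $m$ then produces exactly the weight $u$ of the statement, the remainder term $(\int_0^1 u)^{1/m}\int_0^1 f_\mu^{\ast}$ being dominated by $\|f\|_{L^1_\mu+L^{\infty}}$ since $\int_0^1 f_\mu^{\ast}=\|f\|_{L^1_\mu+L^{\infty}}$ and $\int_0^1 u<\infty$.

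The one genuinely delicate point is the two-step H\"older reduction in the master estimate --- and, with it, the norm equivalence used in (i) --- since multiplication by $t^{-1/D}$ does not commute formally with the inner $\overline{p}^{\ast}$-power of the $G\Gamma$-norm, and it is precisely there that (\ref{ggweg}) and the relation $1/\overline{p}^{\ast}=1/\overline{p}-1/D$ enter. (An alternative that makes both H\"older applications symmetric --- each with conjugate exponents summing to $1$ --- is to apply Theorem \ref{main th}(iv) with $p=\overline{p}$ in place of $p=1$, obtaining first an estimate in $G\Gamma(\overline{p},m,w)$ and then passing to $G\Gamma(\overline{p}^{\ast},m,w)$ via $\|t^{-1/D}h\|_{G\Gamma(\overline{p},m,w)}\simeq\|h\|_{G\Gamma(\overline{p}^{\ast},m,w)}$ for non-increasing $h$.) Everything else is a verbatim transcription of the Gamma-space argument.
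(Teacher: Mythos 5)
Your proposal follows the paper's own proof essentially verbatim: the same master estimate for $\Vert t^{-1/D}[f_{\mu}^{\ast\ast}(t)-f_{\mu}^{\ast}(t)]\Vert_{G\Gamma(\overline{p}^{\ast},m,w)}$ obtained from Theorem \ref{main th}\textit{(iv)} followed by two applications of H\"older and Lemma \ref{rakoto}; Theorem \ref{teoind} for parts \textit{(i)} and \textit{(ii)}; and, for part \textit{(iii)}, the same lower bound extracted as in part \textit{(iii)} of Theorem \ref{aqqq} followed by Lemma \ref{pesos} with $v(s)=s^{-m/D}w(s)$. The ``delicate point'' you flag is genuine, but you have not introduced it: the paper performs the inner H\"older step with exponents $\tfrac{\overline{p}^{\ast}(A_{i}+1)}{p_{i}D}$ (which sum to $\overline{p}^{\ast}/\overline{p}$, not to $1$) in exactly the same unargued way, so your write-up reproduces the published argument and correctly isolates the one spot where it is thin.
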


\begin{proof}
By Theorem \ref{main th} we get
\begin{eqnarray*}
\Vert t^{-1/D}[f_{\mu }^{\ast \ast }(t)-f_{\mu }^{\ast }(t)]\Vert _{G\Gamma (%
{\overline{p}}^{\ast },m,w)} &\preceq &\left\| \prod_{i=1}^{n}(\tilde{f}%
_{x_{i}})_{\mu }^{\ast }(t)^{\frac{A_{i}+1}{D}}\right\| _{G\Gamma ({%
\overline{p}}^{\ast },m,w)} \\
&=&\left( \int_{0}^{\infty }\left( \int_{0}^{t}\prod_{i=1}^{n}(\tilde{f}%
_{x_{i}})_{\mu }^{\ast }(s)^{{\overline{p}}^{\ast }\frac{A_{i}+1}{D}%
}ds\right) ^{m/{\overline{p}}^{\ast }}w(t)\,dt\right) ^{\frac{1}{{m}}} \\
&\leq &\left( \int_{0}^{\infty }\prod_{i=1}^{n}\left( \int_{0}^{t}(\tilde{f}%
_{x_{i}})_{\mu }^{\ast }(s)^{p_{i}}ds\right) ^{\frac{{m}}{p_{i}}\frac{A_{i}+1%
}{D}}w(t)\,dt\right) ^{\frac{1}{{m}}} \\
&\leq &\prod_{i=1}^{n}\left( \int_{0}^{\infty }\left( \left( \int_{0}^{t}(%
\tilde{f}_{x_{i}})_{\mu }^{\ast }(s)^{p_{i}}ds\right) ^{\frac{{m}}{p_{i}}%
}w(t)\,\right) dt\right) ^{\frac{1}{{m}}\frac{A_{i}+1}{D}} \\
&=&\prod_{i=1}^{n}\left\| (\tilde{f}_{x_{i}})_{\mu }^{\ast }\right\|
_{G\Gamma ({p}_{i},m,w)}^{_{\frac{A_{i}+1}{D}}} \\
&\leq &\prod_{i=1}^{n}\left\| f_{x_{i}}\right\| _{G\Gamma ({p}_{i},m,w)}^{%
\frac{A_{i}+1}{D}}\text{ \ (by Lemma \ref{rakoto}).}
\end{eqnarray*}
Part \textit{i}) follows form \textit{i}) of Theorem \ref{teoind},
because
\begin{equation*}
\Vert t^{-1/D}[f_{\mu }^{\ast \ast }(t)-f_{\mu }^{\ast }(t)]\Vert _{G\Gamma (%
{\overline{p}}^{\ast },m,w)}\simeq \Vert t^{-1/D}f_{\mu }^{\ast \ast
}(t)\Vert _{G\Gamma ({\overline{p}}^{\ast },m,w)}
\end{equation*}

Part \textit{ii}) is consequence of \textit{i}) of Theorem
\ref{teoind}.

Let us prove now \textit{iii}). If we denote $I:=\Vert
t^{-1/D}[f_{\mu }^{\ast \ast }(t)-f_{\mu }^{\ast }(t)]\Vert
_{G\Gamma ({\overline{p}}^{\ast },m,w)}$ using the method of part
iii) of Theorem \ref{aqqq} we get
\begin{equation*}
I\geq \left( \int_{0}^{\infty }[f_{\mu }^{\ast \ast }(s)-f_{\mu
}^{\ast }(s)]^{m}s^{-\frac{m}{D}}w(s)\,ds\right) ^{1/{m}}.
\end{equation*}
Considering as weights
\begin{equation*}
v(s):=s^{-\frac{m}{D}}w(s)\text{ \ and }u(t)=\frac{\partial }{\partial t}%
\left( 1+\int_{t}^{1}\frac{\left( s^{-\frac{m}{D}}w(s)\right) ^{\frac{-1}{{m}%
-1}}}{s^{\frac{{m}}{{m}-1}}}ds\right) ^{1-{m}},
\end{equation*}
the result follows from Lemma \ref{pesos}.
\end{proof}

\bigskip

\textit{Acknowledgments. } {The first author has been partially supported by
FFABR and the second one has been partially supported in part by Grants
MTM2016-77635-P, MTM2016-75196-P (MINECO) and by 2017SGR358.}

The second author gratefully acknowledges the generous hospitality
of the Universit\`{a} degli Studi di Napoli \textquotedblleft
Federico II\textquotedblright \, Dipartimento di Matematica
\textquotedblleft R. Caccioppoli\textquotedblright \, where much of
this work was done.

\bigskip

\bigskip

\bigskip

\bigskip

\bigskip

\bigskip


\begin{thebibliography}{99}
\bibitem{C2018}  A. Alberico, A. Cianchi, L. Pick and L. Slav\'{i}kov\'{a},
\textsl{Sharp Sobolev type embeddings on the entire Euclidean space}.
Commun. Pure Appl. Anal. \textsl{17} (2018), n. 5, 2011?-2037.

\bibitem{AT}  A. Alvino and G. Trombetti, \textsl{Sulle migliori costanti di
maggiorazione per una classe di equazioni ellittiche degeneri}.
Ricerche Mat. \textbf{27} (1978), 413--428.

\bibitem{BGM}  D. Bakry, I. Gentil and M. Ledoux. \textsl{Analysis and
Geometry of Markov Diffusion Operators}. Grundlehren der
Mathematischen Wissenschaften, vol. 348. Springer, Berlin (2013).

\bibitem{BMR}  J. Bastero, M. Milman and F.J. Ruiz. \textsl{A note on $%
L(\infty ,q)$s spaces and Sobolev embeddings}. Indiana Univ. Math. J.
\textbf{52} (2003), 1215--1230.

\bibitem{BS}  C. Bennett and R. Sharpley, \textsl{Interpolation of Operators}%
. Academic Press, Boston\textbf{, }1988.

\bibitem{Br0}  H. Brezis. \textsl{Is there failure of the inverse function
theorem? Morse theory}, Minimax theory and their applications to nonlinear
differential equations. In: Proceedings of the Workshop held at the Chinese
Academy of Sciences, Beijing, 1999, 2333, New Stud. Adv. Math., 1, Int.
Press, Somerville, MA, (2003)

\bibitem{Br1}  H. Brezis and J.L. V\'{a}zquez. \textsl{\ Blow-up solutions
of some nonlinear elliptic problems}. Rev. Mat. Univ. Complut.
\textbf{10} (1997), 443--469


\bibitem{boyd}  D.W. Boyd. \textsl{Indices of function spaces and their
relationship to interpolation}. Canad. J. Math. \textbf{21} (1969),
1245--1254.

\bibitem{BCM}  F. Brock , F. Chiacchio and A. Mercaldo. \textsl{A weighted
isoperimetric inequality in an orthant}. Potential Anal. 41 (2014),
171--186.

\bibitem{CS}  M. J. Carro and J. Soria. \textsl{Weighted Lorentz spaces and
the Hardy operator}. J. Funct. Anal. \textbf{112} (1993), 480--494.

\bibitem{C}  X. Cabr\'{e}. \textsl{Isoperimetric, Sobolev, and eigenvalue
inequalities via the Alexandroff-Bakelman-Pucci method: a survey}.
Chin. Ann. Math., \textbf{38B(1)} (2017), 201--214.

\bibitem{CR}  X. Cabr\'{e} and X. Ros-Oton. \textsl{Sobolev and
isoperimetric inequalities with monomial weights}. J. Differential
Equations, \textbf{255} (2013), 4312--4336.

\bibitem{CR1}  X. Cabr\'{e} and X. Ros-Oton. \textsl{Regularity of stable
solutions up to dimension 7 in domains of double revolution}. Commun.
Partial Differ. Equ. \textbf{38 }(2013), 135--154.



\bibitem{C_fully}  A. Cianchi. \textsl{A fully anisotropic Sobolev inequality%
}. Pacific J. Math. \textbf{196} (2000), n. 2, 283--295.

\bibitem{Cia}  A. Cianchi. \textsl{Symmetrization and second-order Sobolev
inequalities}. Ann.Mat. \textbf{183} (2004), 45--77.

\bibitem{C_optimal}  A. Cianchi. \textsl{Optimal Orlicz-Sobolev embeddings}.
Rev. Mat. Iberoamericana \textbf{20} (2004), 427--474.

\bibitem{CPL} A. Cianchi, L. Pick and L. Slavikova. \textsl{Sobolev
embeddings. rearrangement-invariant spaces and Frostman measures}.
Ann. Inst. H. Poincar\'{e} Anal. Non Lin\'{e}aire (to appear).

\bibitem{FR}  Fiorenza, A.; Rakotoson, J. M. \textsl{Some estimates in $%
G\Gamma (p,m,w)$ spaces}. J. Math. Anal. Appl. \textbf{340} (2008),
793--805.

\bibitem{FR1}  A. Fiorenza, J.M. Rakotoson, \textsl{Compactness,
interpolation inequalities for small LebesgueSobolev spaces and applications}%
. Calc. Var. Partial Differential Equations \textbf{25} (2005)
187--203.

\bibitem{FRGKR} A. Fiorenza, M.R. Formica, A. Gogatishvili, T. Kopaliani,
and J.M. Rakotoson. \textsl{Characterization of interpolation
between grand, small or classical Lebesgue spaces}. Nonlinear Anal.
177 (2018),422--453.

\bibitem{GMPS}  A. Gogatishvili, M. Krepela, L. Pick and F. Soudsk\'{y}.
\textsl{Embeddings of Lorentz-type spaces involving weighted integral means}%
. J. Funct. Anal. \textbf{273} (2017), 2939--2980.


\bibitem{KPS}  S. G. Krein, Yu. I. Petunin and E. M. Semenov. Interpolation
of linear operators, \emph{Transl. Math. Monogr. Amer. Math, Soc. }\textbf{%
54, }Providence\textbf{, }(1982).

\bibitem{KP}  V. I. Kolyada and F. J. P\'{e}rez. \textsl{Estimates of
difference norms for functions in anisotropic Sobolev spaces}.
Mathematische Nachrichten \textbf{267}(1) (2004), 46--64.

\bibitem{Lam}  N. Lam. \textsl{Sharp Trudinger-Moser inequalities with
monomial weights}. NoDEA Nonlinear Differential Equations Appl.
\textbf{24} (2017), no. 4, Art. 39, 21 pp.

\bibitem{lt}  J. Lindenstrauss and L. Tzafriri. \textsl{Classical Banach
Spaces II. Function Spaces}, Springer-Verlag, Berlin, 1979.

\bibitem{MMAdv}  J. Martin and M. Milman. \textsl{Pointwise symmetrization
inequalities for Sobolev functions and applications}. Adv. Math. \textbf{225}
(2010),121--199.

\bibitem{MMpoten}  J. Martin and M. Milman. \textsl{Self improving
Sobolev-Poincar\'{e} inequalities, truncation and symmetrization}. Potential
Anal. 29 (2008), 391--408.

\bibitem{MMP07}  J. Martin, M. Milman and E. Pustylnik. \textsl{%
Sobolev inequalities: Symmetrization and self-improvement via
truncation}. J. Funct. Anal. \textbf{252} (2007), 677--695.


\bibitem{Ne}  C. J. Neugebauer. \textsl{Weighted norm inequalities for
averaging operators of monotone functions}. Publ. Mat. \textbf{35}
(1991), 429--447.

\bibitem{Ng}  {\ V. H. Nguyen. \textsl{Sharp weighted Sobolev and
Gagliardo-Nirenberg inequalities on half-spaces via mass transport
and consequences}. Proc. Lond. Math. Soc.(3) \textbf{\ 111 } (2015),
n. 1, 127--148.}

\bibitem{rakotoson}  J. Rakotoson. R\'{e}arrangement relatif. (French)
[Relative rearrangement] Un instrument d'estimations dans les probl\`{e}mes
aux limites. [An estimation tool for limit problems] Math\'{e}matiques \&
Applications (Berlin) [Mathematics \& Applications], 64. Springer, Berlin,
2008. xvi+293 pp.

\bibitem{sw}  E. Sawyer.\textsl{\ Boundedness of classical operators on
classical Lorentz spaces}. Studia Math. \textbf{96} (1990),
145--158.


\bibitem{Tar}  L. Tartar. \textsl{Imbedding theorems of Sobolev spaces into
Lorentz spaces}. BUMI Serie \textbf{8 1-B} (1998), 479--500.

\bibitem{Tr}  M. Troisi. \textsl{\ Teoremi di inclusione per spazi di
Sobolev non isotropi}. Ricerche Mat. \textbf{18} (1969), 3--24.
\end{thebibliography}
\end{document}